\numberwithin{equation}{section}
\definecolor{lightblue}{HTML}{044E9E}
\newcommand{\veps}{\varepsilon}
\newcommand{\1}{\boldsymbol{1}}
\newcommand{\ind}{\mathds{1}}
\newcommand{\R}{\mathbb{R}}
\newcommand{\EE}{\operatorname{E}}
\newcommand{\RR}{\mathbb{R}}
\newcommand{\ZZ}{\mathbb{Z}}
\newcommand{\NN}{\mathbb{N}}
\newcommand{\clc}{\mathcal{C}}
\newcommand{\clr}{\mathcal{R}}
\newcommand{\clb}{\mathcal{B}}
\newcommand{\clh}{\mathcal{H}}
\newcommand{\cln}{\mathcal{N}}
\newcommand{\cle}{\mathcal{E}}
\newcommand{\clf}{\mathcal{F}}
\newcommand{\clt}{\mathcal{T}}
\newcommand{\cll}{\mathcal{L}}
\newcommand{\mfh}{\mathfrak{H}}
\newcommand{\mfc}{\mathfrak{C}}
\newcommand{\bl}{\boldsymbol{l}}
\newcommand{\clp}{\mathcal{P}}
\newcommand{\bfl}{{\boldsymbol{l}}}
\newcommand{\bfs}{{\boldsymbol{s}}}
\newcommand{\bfr}{{\boldsymbol{r}}}
\newcommand{\E}{\operatorname{E}}
\newcommand{\Cov}{\operatorname{Cov}}
\newcommand{\cli}{\mathcal{I}}
\newcommand{\clx}{\mathcal{X}}
\newcommand{\covop}{\mathcal{T}}
\newcommand{\Om}{\Omega}
\newcommand {\op}{\operatorname{op}}
\newcommand {\rk}{\operatorname{rank}}
\newcommand {\tr}{\operatorname{Tr}}
\newcommand {\Id}{\operatorname{Id}}
\definecolor{expcol}{rgb}{1.0,0.0,0.5}
\definecolor{ecol}{rgb}{0.0, 0.0, 1.0}
\xpatchcmd{\proof}{\@addpunct{.}}{\@addpunct{:}}{}{}
\newtheorem{theorem}{Theorem}[section]
\newtheorem{lemma}{Lemma}[section]
\theoremstyle{definition}
\newtheorem{remark}{Remark}[section]
\newtheorem{example}{Example}[section]
\newtheorem{defn}{Definition}[section]
\newtheorem{assumption}{Assumption}[section]
\DeclareFontFamily{U}{mathx}{\hyphenchar\font45}
\DeclareFontShape{U}{mathx}{m}{n}{<-> mathx10}{}
\DeclareSymbolFont{mathx}{U}{mathx}{m}{n}
\DeclareMathAccent{\widebar}{0}{mathx}{"73}
\newcommand{\mockalph}[1]{}
\begin{document}

\def\spacingset#1{\renewcommand{\baselinestretch}%
{#1}\small\normalsize} \spacingset{1}

%%%%%%%%%%%%%%%%%%%%%%%%%%%%%%%
\newtheorem*{assumptionBIC*}{\assumptionnumber}
\providecommand{\assumptionnumber}{}
\makeatletter
\newenvironment{assumptionBIC}[2]
 {%
  \renewcommand{\assumptionnumber}{Assumption #1#2}%
  \begin{assumptionBIC*}%
  \protected@edef\@currentlabel{#1#2}%
 }
 {%
  \end{assumptionBIC*}
 }
\makeatother
 %%%%%%%%%%%%%%%%%%%%%%%%%%%%%%%

%%%%%%%%%%%%%%%%%%%%%%%%%%%%%%%%%%%%%%%%%%%%%%%%%%%%%%%%%%%%%%%%%%%%%%%%%%%%%%

\title{Breuer-Major Theorems for Hilbert Space-Valued Random Variables
\footnote{AMS subject classification. Primary: 60F05, 62R10. Secondary: 60G10, 60G15.}
\footnote{Keywords: Breuer-Major theorems; Hilbert space-valued processes; Gaussian subordination; quantitative central limit theorems; functional data analysis; autoregressive Hilbertian processes.}}

\author{
Marie-Christine D\"uker \\ Technical University of Munich                             \and
Pavlos Zoubouloglou      \\ University of Münster}
\date{\today}

\maketitle

\bigskip

\begin{abstract}
\noindent
Let $\{X_k\}_{k\in\mathbb Z}$ be a stationary Gaussian process with values in a separable Hilbert space $\mathcal H_1$, and let $G:\mathcal H_1\to\mathcal H_2$ be a measurable map into another separable Hilbert space $\mathcal H_2$. We derive a central limit theorem for the centered normalized partial sums of the Hilbert space-valued subordinated process $\{G[X_k]\}_{k\in\mathbb Z}$. Our result holds under either of two sets of sufficient conditions, formulated in terms of the transformation $G$ and the temporal and cross-sectional dependence structure of $\{X_k\}_{k\in\mathbb Z}$. These conditions coincide in finite dimensions but lead to genuinely different phenomena in the infinite-dimensional setting. The proof relies on the recently developed Fourth Moment Theorem on Hilbert spaces, leveraging tools from the infinite-dimensional Malliavin--Stein framework. We also provide continuous-time and quantitative versions of the central limit theorem. In a series of examples, we recover and strengthen limit theorems for a wide array of statistics relevant in functional data analysis, and present, as an application of our result, a novel limit theorem in the framework of neural operators.
\end{abstract}

\section{Introduction} \label{sec:intro}
Consider a stationary Gaussian process $\{ X_{k}\}_{k \in \ZZ}$, defined on a complete probability space $(\Omega, \mathcal{F}, P)$ and taking values in a separable Hilbert space $\mathcal{H}_1$. Let $G: \clh_1 \to \clh_2$ be a measurable function mapping $\clh_1$ into a (possibly different) separable Hilbert space $\clh_2$, where $\clh_1, \clh_2$ are equipped with inner products $\langle \cdot, \cdot \rangle_{\clh_i}$ and induced norms $\| \cdot \|_{\clh_i}$, $i=1,2$. 
This work aims to find conditions on the mapping $G$ and the temporal and cross-sectional correlation structure of the underlying process $\{ X_{k} \}_{k \in \ZZ}$ that ensure a Central Limit Theorem (CLT), i.e., the weak convergence of the normalized partial sums 
\begin{equation} \label{eq:partial_sum_G}
    S_n = \frac{1}{\sqrt{n}} \sum_{k=1}^n \left( G[X_k] - \E G[X_k] \right), \hspace{0.2cm}
    n \in \NN,
\end{equation}
to a Gaussian random variable in the topology of $\clh_2$, as $n \to \infty$. Here, $\{G[X_k]\}_{k \in \ZZ}$ is known as the class of \textit{subordinated Gaussian processes}. Without loss of generality, we assume in Sections \ref{sec:intro}--\ref{app:quant_version} that $\E G[X_1]=0$.

For $\{X_k\}$ being independent and identically distributed (i.i.d.) random variables and taking values in a Hilbert space $\clh$, Varadhan \cite{Var62} was the first to prove a CLT for the normalized partial sums \eqref{eq:partial_sum_G}.
Following \cite{Var62}, a new line of work developed, aiming to understand CLTs in infinite dimensions under more general dependence structures. The literature on CLTs for Hilbert space-valued processes typically assumes that the sequence of random variables $\{X_k\}$ either admits a linear representation (e.g., \cite{duk18,Mer03,Mer97,jirak2018,Mer96}), or meets suitable mixing conditions (e.g., \cite{Mer96, MalOst83,Ded03,jirak2018}). 

For $\clh_1,\clh_2$ being finite-dimensional, the behavior of $\{S_n\}_{n \in \NN}$ is by now well understood for Gaussian processes $\{X_k\}$ and under general assumptions on $G$, avoiding linearity or mixing conditions.
The seminal work of Breuer and Major \cite{breuer1983central} ($\clh_1 = \clh_2 = \RR$) showed the convergence of \eqref{eq:partial_sum_G} to a Gaussian law whenever $\E |G(X_1)|^2 < \infty$ and $\{G(X_k)\}_{k \in \ZZ}$ exhibits short-range dependence, in the sense that its autocovariance (equivalently, autocorrelation) functions are absolutely summable. Such theorems are customarily referred to as Breuer-Major theorems. Later, \cite{arcones1994limit} considered the multivariate setting $\clh_1 = \RR^d , \clh_2 = \RR$ with $\E |G(X_1)|^2 < \infty$ and $\E X_1=0$, $\operatorname{Cov}(X_1) = \operatorname{Id}_{d}$ (with $\operatorname{Id}_{d}$ the $d\times d$ identity matrix); assuming further that the autocorrelation (equivalently, autocovariance) functions satisfy
\begin{equation} \label{eq:ass-finite-dim}
\sum_{v \in \ZZ} \sup_{r,s=1,\dots,d} | 
\phi_{rs}(v)|^q < \infty, 
\hspace{0.2cm}
\text{ with }
\hspace{0.2cm}
\phi_{rs}(v) \doteq \E X^{(r)}_1 X^{(s)}_{1+v},
\end{equation} 
where $X^{(r)}_k, r = 1,\dots, d$, denotes the $r$-th component of $X_k$ and $q$ the Hermite rank of the function $G$ to be defined (in a more general setting) in Definition \ref{def:Hermiterank}
below. Since then, several other cases for finite-dimensional $\clh_1,\clh_2$ have been investigated, including $\clh_2 = \RR^m, m \ge 1$; see \cite{ho1990limiting, bai2013multivariate,giraitis1985clt}.

The proof techniques used in the seminal work \cite{breuer1983central} and its respective follow-up articles are based on moment and cumulant computations using diagram formulae. An alternative, more modern approach is based on a pairing of Malliavin calculus and Stein’s lemma to derive quantitative CLTs under different distances implying weak convergence for subordinated Gaussian processes. We refer to \cite{nourdin2012normal} for a detailed outline of the tools that are used. This machinery has since then been leveraged to prove (quantitative) Breuer-Major theorems in numerous settings, including
$\clh_1 = \RR^d, \clh_2 = \RR^m$ \cite{nourdin2011quantitative,KuzNua19,NouRos14,NouPecRev10,NouPecYan19,nourdin2015optimal,Nou21}; random fields, i.e., $n \in \RR^p$ in \eqref{eq:partial_sum_G} \cite{NouPec09}, and functional settings \cite{nourdin2020functional,CamNouNua20}.
Note that, for the functional setting, the former work refers to continuous-time processes, and the latter work \cite{nourdin2020functional} to the functional convergence of processes $\frac{1}{\sqrt{n}} \sum_{k=1}^{\lfloor n \cdot \rfloor} G(X_k)$, with $G : \RR \to \RR$. In particular, they are both different from our setting  \eqref{eq:partial_sum_G}. For future reference we use the term \textit{continuous (time) CLT} to refer to the convergence of $\frac{1}{\sqrt{n}} \sum_{k=1}^{\lfloor n \cdot \rfloor} G[X_k]$, to avoid confusion with CLTs for random variables in function spaces.

The work \cite{Bou20} was the first to adapt the Malliavin-Stein approach to the context of random variables taking values in separable Hilbert spaces. Their motivation stemmed from the problem of proving continuous CLTs to a Gaussian random variable in a function space in the case $\clh_1 = \clh_2 = \RR$, thus bypassing the usual procedure of proving convergence of the finite-dimensional distributions and tightness of the family of random variables $\{S_n\}_{n \in \NN}$ in the respective space. Unfortunately, as was pointed out in \cite{BasBurCamPec25}, the work \cite{Bou20} employed a metric that does not metrize convergence in distribution in the topology of the respective Hilbert space. In the recent article \cite{duker2025fourth}, we were able to obtain a correct fourth-moment theorem in the so-called $d_2$ distance; see Section \ref{subsec:prelim-infinite} below.

The first version of the present work also relied on \cite{Bou20}, hence yielding inconclusive results. In the current version, we leverage the tools developed in \cite{duker2025fourth} to prove a Breuer-Major theorem for \eqref{eq:partial_sum_G} allowing for general Hilbert spaces $\clh_1,\clh_2$. There are three main technical difficulties associated with our proofs. 

The first difficulty is in finding a chaotic decomposition for the process $\{S_n\}_{n \in \NN}$. To be more precise, we decompose the Hilbert space $L^2(\clh_1,\gamma:\clh_2)$, where $\gamma$ is a suitable Gaussian measure on $\clh_1$ and $L^2$ is the usual space of square integrable functions from $\clh_1$ to $\clh_2$, into its chaotic components; see Section \ref{sec:Wiener-Ito-chaos}. The arguments in Section \ref{sec:Wiener-Ito-chaos} extend the classical Wiener-It\^o decomposition (Section 2 in \cite{nourdin2012normal}) for elements in $L^2(\clh_1,\gamma:\clh_2)$. The decomposition allows us to obtain tractable forms for the Hermite coefficients, which can then be used for applications; see Section \ref{sec:applications}.

The second difficulty is to impose suitable conditions on $G$ and on the temporal and cross-sectional correlation structure of the process $\{X_k\}$. Throughout, for elements \(x,y\) of a Hilbert space $\clh$, we use the tensor convention
$(x\otimes y)h \doteq \langle h,y\rangle_{\clh} x $.
Define the autocovariance operator of $X_k$ by
\begin{equation} \label{eq:Gamma-op}
    \Gamma(v):\clh_1 \to \clh_1, 
    \hspace{0.2cm} 
    \Gamma(v)[h] \doteq \E \left( \langle X_1, h \rangle_{\clh_1} X_{1+v}\right)
    =
    \E(X_{1+v}\otimes X_1)[h], \quad v \in \ZZ.
\end{equation}
The proofs in finite dimension, e.g., \cite{arcones1994limit} and \cite{nourdin2011quantitative}, crucially exploit the fact that $\operatorname{dim}(\clh_1) < \infty$, which leads to condition \eqref{eq:ass-finite-dim}. For $\{e_r \}_{r \in \NN}$ an orthonormal basis in $\clh_1$, consider the marginal process $\{\langle X_k, e_r \rangle_{\clh_1}\}_{k \in \ZZ}$, $r \geq 1$. Attempting to extend the finite-dimensional theory directly, one might conjecture that the natural infinite-dimensional analogue of \eqref{eq:ass-finite-dim} is
\begin{equation} \label{eq:conj-cond}
\sum_{v \in \ZZ} \sup_{r,s \in \NN} | \langle \Gamma(v)e_r, e_s \rangle_{\clh_1} |^q < \infty.
\end{equation}
A perhaps surprising feature of the infinite-dimensional theory is that the sufficient assumptions emerging from our analysis take a rather different form. We obtain our Breuer-Major theorem under either of two alternative sets of assumptions: one formulated in terms of summability of the autocorrelation operators $R(v)$ (see \eqref{eq:R(v)-op} below for a rigorous definition) in operator norm, and another formulated in terms of summability of the autocovariance operators $\Gamma(v)$ in trace norm together with a regularity condition on $G$; see Assumptions \ref{thm:ass-2.2} and \ref{thm:ass-2.1}, respectively. In finite dimensions, these conditions collapse to equivalent requirements and are consistent with the classical Breuer-Major setting; see \eqref{eq:A1implesA2part2} and the corresponding discussion below. In infinite dimensions, however, they are genuinely different from the entrywise condition \eqref{eq:ass-finite-dim} suggested by the finite-dimensional theorem. This distinction is one of the main ways in which the Hilbert space-valued Breuer-Major theorem departs from its finite-dimensional counterpart. Moreover, under the second route, trace-norm summability of $\Gamma(v)$ alone is not sufficient in general: an additional condition on $G$ is needed in order to ensure that the limiting covariance operator is trace-class; see the discussion following \eqref{eq:A1implesA2part2} below.

The third major difficulty stems from the necessity to control the trace-class norm of the difference of covariance operators, which are in general hard to compute. The need to do so arises from the conditions of the fourth-moment theorem in infinite dimensions developed in \cite{duker2025fourth}, which provides bounds for the correct $d_2$ distance between two probability laws in terms of such trace-class norms. To address this, our proofs employ operator decompositions that control the difference of operators in the trace-class norm even for operators that are not necessarily positive semidefinite or self-adjoint; see \eqref{al:operator-notation-separate-sum}. In particular, this part deviates from the classical proof strategy employed for the real and, more generally, finite-dimensional Breuer-Major theorem.

Besides developing a CLT for operators of Gaussian Hilbert space-valued random variables $\{S_n\}_{n \in \NN}$ in \eqref{eq:partial_sum_G}, we provide a continuous-time as well as a quantitative version of the Breuer-Major theorem for such random variables. To the best of our knowledge, our results are the first to generalize the notion of subordination to Hilbert space-valued random processes, allowing for a general class of transformations $G$. 

Our investigation is closely related to the modeling and analysis of functional data (e.g., data in $L^2([0,1])$, where CLTs can be used for downstream statistical tasks, such as the design of hypothesis tests, or the construction of confidence bands (for some expositions, see \cite{RamsaySilberman,Bosq2000:Linear,HsiEub15}). 
Standard models are Hilbert space-valued generalizations of autoregressive models
\cite{aue2015prediction,liu2016convolutional}.
Recently, the study of functional data has resurged, due to Machine Learning applications. Examples include data taking values in reproducing kernel Hilbert spaces \cite{MolKluSchKol22,wang2020functional}, as well as neural networks for learning maps between function spaces; see \cite{kovachki2023neural} who introduced the framework of neural operators for this task.

In Section \ref{sec:applications}, we illustrate the generality of our main result by applying it to statistics of functional data such as the sample covariance operator (also studied in \cite{MAS2002117,mas2006sufficient,duk20}) and estimators for eigenvalues of the covariance operator (e.g., \cite{Bosq2000:Linear}). Such results already exist for linear processes, but we extend them to a different dependence structure 
determined through the process's autocorrelation structure.
The neural network literature has been interested in proving CLTs for networks with randomly initialized weights
as the width of the layers diverges; see
\cite{favaro2023quantitative,kovachki2023neural,MolKluSchKol22}.
We provide a novel result along these lines for neural operators giving a quantitative CLT in terms of the network's increasing width.

The rest of the paper is organized as follows. Section \ref{subsec:prelim-infinite} focuses on notation, terminology, and some standard facts. In Section \ref{se:mainresults}, we present our main results. Subsequently, we introduce a chaotic decomposition in Section \ref{sec:Wiener-Ito-chaos}.
The proofs of our main results are presented in Section \ref{se:proofs}.
A quantitative version of our main result and its proof are given in Section \ref{app:quant_version}. Section \ref{sec:applications} states a variety of applications of our results. Section \ref{app:aux} presents auxiliary results needed for our proofs.

\section{Preliminaries} \label{subsec:prelim-infinite}

\textit{Notation and terminology:} Let $(\clx,\clf,\mu)$ be a measure space and $\mathcal{H}$ a separable Hilbert space equipped with inner product $\langle \cdot, \cdot  \rangle_{\mathcal{H}}$ and norm $\| \cdot \|_{\mathcal{H}}$. We denote by $L^2(\clx,\clf,\mu;\clh)$ the space of strongly measurable, square-integrable functions from $\clx$ to $\clh$ with respect to $\mu$. Whenever it is clear from the context, we omit the measure $\mu$ and write $L^2(\clx:\mathcal{H})$. Moreover, if $\clh = \RR$, we may write $L^2(\clx)$.

We recall some probabilistic results on general Hilbert spaces. 
Let $\{F_{n}\}_{n\in \NN}$ be a sequence of $\clh$-valued random variables. Then, $F_n$ converges in distribution to an $\clh$-valued random variable $F$ if 
\begin{equation}
    \E(h(F_n)) \to \E(h(F)),
\end{equation}
as $n \to \infty$, for all bounded, real valued and continuous functions $h$ on $\clh$. We say that an $\clh$-valued random variable $F$ admits second moments if $\E \| F \|^2_{\clh} < \infty$.
For further details, we refer to Section 2 in \cite{Bosq2000:Linear}.

\textit{Operators and norms on Hilbert spaces:}
For two Banach spaces $(V,\|\cdot \|_V)$ and $(W,\|\cdot \|_W)$, we denote as customary by $\cll(V,W)$ the space of all bounded linear operators $T : V \to W$. The space $\cll(V,W)$ is equipped with the norm
\begin{equation} \label{def:op-norm}
\|T\|_{\op} \doteq \sup_{\|x\|_V \le 1} \|Tx\|_W,
\end{equation}
so that $(\cll(V,W), \| \cdot \|_{\op})$ is a Banach space. When $V=W$, we write simply $\cll(V)$ for the space of bounded linear operators on $V$. Moreover, we write $\cll(V^{\otimes k},W)$ for the space of bounded linear operators from $V^{\otimes k}$ to $W$. When needed, we emphasize the spaces by writing $\|T\|_{\op(V,W)}$.

Let $\clh_1$ and $\clh_2$ be separable Hilbert spaces, and let $\{e_i\}_{i \in \NN}$ be an orthonormal basis of $\clh_1$. We denote by $\mathcal{S}_2(\clh_1,\clh_2)$ the space of Hilbert-Schmidt operators from $\clh_1$ to $\clh_2$, namely
\[
\mathcal{S}_2(\clh_1,\clh_2)
\doteq
\left\{
T \in \cll(\clh_1,\clh_2)
:
\sum_{i=1}^{\infty} \|T e_i\|_{\clh_2}^2 < \infty
\right\}.
\]
This space is a Hilbert space when equipped with the inner product
\begin{equation} \label{def:HS_norm}
     \langle T, S \rangle_{\mathcal{S}_2(\clh_1,\clh_2)}
     \doteq
     \sum_{i=1}^{\infty}
     \langle T(e_i), S(e_i) \rangle_{\clh_2},
     \hspace{0.2cm}
     \|T\|_{\mathcal{S}_2(\clh_1,\clh_2)}^2
     \doteq
     \sum_{i=1}^{\infty} \|T(e_i)\|_{\clh_2}^2 .
\end{equation}
The definition is independent of the chosen orthonormal basis of $\clh_1$. In the special case $\clh_1=\clh_2=\clh$, we also use the Schatten notation $\mathcal{S}_2(\clh) \doteq \mathcal{S}_2(\clh,\clh)$.

Closely related is the Banach space of trace-class operators, denoted by $\mathcal{S}_1(\clh)$ and equipped with the norm 
\begin{equation} \label{def:trace_norm}
     \| T \|_{\mathcal{S}_1(\clh)}
     =
     \tr(|T|)
     \doteq
     \sum_{i=1}^{\infty}
     \langle |T| e_i, e_i \rangle_{\clh},
\end{equation}
where $|T| = \sqrt{T^{*}T}$. If $T$ is a non-negative, self-adjoint operator, for instance a covariance operator, then
\[
\| T \|_{\mathcal{S}_1(\clh)}
=
\tr(T)
=
\sum_{i=1}^{\infty}
\langle T e_i, e_i \rangle_{\clh}.
\]
The three norms \eqref{def:op-norm}, \eqref{def:HS_norm}, and \eqref{def:trace_norm} satisfy
\begin{equation} \label{eq:bounded_HS_trace_norm}
   \|\cdot\|_{\op(\clh)}
   \le
   \| \cdot \|_{\mathcal{S}_2(\clh)}
   \leq
   \| \cdot \|_{\mathcal{S}_1(\clh)} .
\end{equation}

Crucial to our analysis are the isomorphisms $L^2(\Om: \clh) \cong L^2(\Om:\RR) \otimes \clh$ and $\mathcal{S}_2(\clh) \cong \clh \otimes \clh$.
Furthermore, for $\clh_i, i = 1,2$, Hilbert spaces, we can define an inner product on $\clh_1 \otimes \clh_2$, such that
\begin{equation} \label{eq:tensor_inner_product}
    \langle x_1 \otimes y_1, x_2 \otimes y_2  \rangle_{\clh_1 \otimes \clh_2} = \langle x_1, x_2 \rangle_{\clh_1} \langle y_1, y_2 \rangle_{\clh_2}, \quad x_i \in \clh_1, \  y_i \in \clh_2,  \ i = 1,2.
\end{equation}

\textit{Gaussian measures on Hilbert spaces:}
We provide here some elementary facts about Gaussian measures on Hilbert spaces; for more details we refer to \cite{DaPrato2014IntroductionStochasticAnalysis,BigFerForZan24}. Let \(\clh\) be a real separable Hilbert space, let \(\clb(\clh)\) be its Borel \(\sigma\)-field, and let \(Q \in \cll(\clh)\) be a self-adjoint, positive, trace-class operator. A Gaussian measure \(\gamma_{a,Q}\) on \((\clh,\clb(\clh))\), with mean \(a \in \clh\) and covariance operator \(Q\), is the Borel probability measure whose characteristic functional is given by
\begin{equation*}
    \int_{\clh} e^{i \langle x,h\rangle_\clh} \gamma_{a,Q}(dx)
    =
    \exp\left\{
        i \langle a, h\rangle_\clh
        -
        \frac{1}{2}\langle Qh,h\rangle_\clh
    \right\},
    \qquad h \in \clh.
\end{equation*}
Throughout the paper we write \(\gamma_{Q} \doteq \gamma_{0_\clh,Q}\). We say that the Gaussian measure \(\gamma_Q\) (resp. centered Gaussian random variable $Z$ with covariance operator $Q$) is nondegenerate if \(\ker(Q)=\{0\}\), equivalently, if \(\langle Qh,h\rangle_\clh>0\) for every \(h\in \clh\setminus\{0\}\). 

Suppose that $X_1$ is a Gaussian random variable with law $\gamma_Q$, where $Q$ is a nondegenerate covariance operator. Let $\{e_j\}_{j \in \NN}$ be the \textit{eigenvectors} of $Q$ and $\{\lambda_j\}_{j \in \NN}$ its corresponding sequence of positive \textit{eigenvalues}, such that 
$Q e_j = \lambda_j e_j$. Without loss of generality, we can re-enumerate $\{e_j\}_{j \in \NN}$ such that $\lambda_1 \ge \lambda_2 \ge \lambda_3 \dots$.
Then, the covariance operator $Q$ satisfies
\begin{equation} \label{eq:trace_variance}
     \tr(Q) = \sum_{j=1}^{\infty} \lambda_j = \E \| X_1 \|^2_{\clh}, \quad \lambda_j = \E \langle X_1, e_j \rangle^2_{\clh}, 
\end{equation}
by Parseval’s identity.

The space
\begin{equation} \label{eqCameronRange}
Q^{1/2}(\clh)
\doteq 
\operatorname{Ran}(Q^{1/2})
=
\{ y \in \clh : y = Q^{1/2}x \text{ for some } x \in \clh\}
\end{equation}
is called the \textit{Cameron-Martin space}. It is equipped with the inner product
\[
\langle u,v\rangle_{Q^{1/2}(\clh)}
\doteq
\langle Q^{-1/2}u, Q^{-1/2}v\rangle_\clh,
\qquad u,v \in Q^{1/2}(\clh),
\]
where \(Q^{-1/2}\) denotes the inverse of \(Q^{1/2}\) on \(\operatorname{Ran}(Q^{1/2})\). Then, consider the map $W: Q^{1/2}(\clh) \to L^2(\clh,\gamma_Q)$ given by $W_u (x) = \langle x , Q^{-1/2} u \rangle_\clh$ for $u \in Q^{1/2}(\clh), \ x \in \clh$. The map \( W\) is an isometry from \(Q^{1/2}(\clh)\), endowed with the inner product of the ambient space $\clh$, into \(L^2(\clh,\gamma_Q)\), i.e.,
\begin{equation} \label{eq:white-noise-orthog}
    \int_{\clh}  W_{u_1}(x)  W_{u_2}(x) \gamma_Q(dx)
    = \langle Q Q^{-1/2} u_1, Q^{-1/2} u_2\rangle_{\clh} = 
    \langle u_1,u_2\rangle_{\clh},
    \qquad u_1,u_2 \in Q^{1/2}(\clh).
\end{equation}

If, in addition, $Q$ is nondegenerate, then \(\operatorname{Ran}(Q^{1/2})\) is dense in \(\clh\), and the map $W$ extends uniquely to an isometry $W:\clh \to L^2(\clh,\gamma_Q)$, still denoted by $W$. This map is often referred to as the \textit{white-noise mapping} and, for this extension, one has
\begin{equation} \label{eq:white-noise-def-0}
    \int_{\clh} W_{h_1}(x)W_{h_2}(x) \gamma_Q(dx)
=
\langle h_1,h_2\rangle_\clh,
\quad h_1,h_2 \in \clh.
\end{equation}
It turns out that the map $W_h(x)$ admits the following important representation (Exercise 2.17 in \cite{DaPrato2014IntroductionStochasticAnalysis})
\begin{equation} \label{eq:white-noise-def}
    W_h(x) = \sum_{j=1}^\infty \lambda_j^{-1/2} \langle x , e_j \rangle_\clh \langle h , e_j \rangle_\clh, \quad h \in \clh,
\end{equation}
where the series converges in $L^2(\clh,\gamma_{Q})$. 
Throughout this work, we use both the white-noise notation \(W_h(x)\) as well as the explicit representations of $W_h(x)$ given in \eqref{eq:white-noise-orthog}--\eqref{eq:white-noise-def} according to convenience.

For the proofs below, let \( \{e_r\}_{r \in \NN}\) be an orthonormal basis of \(\clh_1\). We introduce the induced autocorrelation function
\begin{equation} \label{eq:cor_scores}
    \rho_{rs}(v)
    \doteq
    \E\big[ W_{e_r}(X_1) W_{e_s}(X_{1+v}) \big],
    \qquad r,s \in \NN,\ v \in \ZZ,
\end{equation}
giving rise to the autocorrelation operator defined as
\begin{equation} \label{eq:R(v)-op}
    R(v):\clh_1 \to \clh_1, 
    \hspace{0.2cm} 
\langle R(v)h,g\rangle_{\mathcal H_1}
\doteq
\E\!\left[W_h(X_1)W_g(X_{1+v})\right],
\qquad g,h\in\mathcal H_1,\ v\in\mathbb Z.
\end{equation}
Throughout the paper, we need the $p$-th tensorization of some of the objects introduced here. In particular, if  \(Qe_j=\lambda_j e_j\), 
\begin{equation} \label{eq:def-Qp}
    Q_p^{-1/2}
    \doteq
    (Q^{-1/2})^{\otimes p}, \quad \text{implying that } Q_p^{-1/2}
    (e_{r_1}\otimes\cdots\otimes e_{r_p})
    =
    (\lambda_{r_1}\cdots\lambda_{r_p})^{-1/2}
    e_{r_1}\otimes\cdots\otimes e_{r_p}.
\end{equation}

Moreover, whenever Assumption \ref{thm:ass-2.1} holds below, the operator $Q_p^{-1/2} C_p$ (with $C_p$ defined in \eqref{eq:defCp}) will be bounded due to \eqref{thm:ass-2-2:different}. Then, we can decompose  
    \begin{equation} \label{eq:Cp-Rv-Gammav}
        C_p^* R(v)^{\otimes p} C_p = (Q_p^{-1/2} C_p)^* \Gamma(v)^{\otimes p} Q_p^{-1/2} C_p,
    \end{equation}

\begin{remark} \label{re:remark-on-operator-identity-R-Gamma}
    One would like to write the equation
    \begin{equation} \label{eq:Rv-Gammav-formal}
        R(v)^{\otimes p}
    =
    Q_p^{-1/2}\Gamma(v)^{\otimes p}Q_p^{-1/2}.
\end{equation}
    However, \eqref{eq:Rv-Gammav-formal} is only formal since the left hand side has been extended to an operator on $\clh_1^{\otimes p}$ through \eqref{eq:R(v)-op}, while the right hand side can only be defined on a dense subset of $\clh_1^{\otimes p}$, i.e., the Cameron Martin space $\left( Q^{1/2}(\clh_1) \right)^{\otimes p}$.
\end{remark}

\textit{Hermite expansions:} 
 As a technical tool necessary for our paper, we introduce an orthogonal basis of the space $L^2(\clh_1,\gamma_Q:\clh_2)$. Simpler forms of this result have appeared elsewhere, e.g., for $L^2(\clh_1,\gamma_Q:\RR)$ in \cite{DaPr06}. For completeness, we start with rephrasing some basic properties of Hermite polynomials in the real valued case. The 
 Hermite polynomials $\{ H_{n} \}_{n \in \NN_{0}}$ build an orthogonal basis of $L^2(\RR,\phi(x) dx: \RR)$, where $\phi$ is the standard normal density; see Proposition 5.1.3 in \cite{pipiras2017long}.
 As a result, every function $f \in L^2(\RR,\phi(x) dx:\RR)$ has an expansion in Hermite polynomials.
 The Hermite polynomial of order $n$ is defined as $H_n(x) = (-1)^n e^{x^2/2} \frac{d^n}{dx^n} e^{-x^2/2},$ for $x \in \RR,  n \geq 1$ and $H_0(x) =1$ for all $x \in \RR$. Moreover, recall their crucial orthogonality property, for $(X,Y)$ jointly Gaussian with standard marginals,
 \begin{equation} \label{eq:hermite-orthogonality}
     \E \left( H_n(X) H_m(Y) \right) = n! \left( \E XY \right)^n \delta_{nm};
\end{equation}
see Proposition 5.1.1 in \cite{pipiras2017long}.

We continue with some more notation. Let $\cll = \ell^1(\NN_0) = \{\boldsymbol{l} \in \NN_0^\infty: \sum_{k=1}^\infty l_k < \infty \}$ denote the space of summable sequences, with each coordinate taking values in $\NN_0$. Note that, for $\bfl \in \cll$, there are only finitely many non-zero elements of $\bfl$. For fixed $\bfl \in \cll$, we define
\begin{equation} \label{eq:Hermite_product}
 H_\bfl[x] \doteq \prod_{m=1}^\infty H_{l_m} ( W_{e_m}(x)) 
 = \prod_{m=1}^{M_\bfl} H_{l_m} ( W_{e_m}(x)), \quad x \in \clh_1,
 \end{equation}
 where $W_{e_m}$ is the white noise mapping defined in \eqref{eq:white-noise-def}, $\{e_m\}_{m \in \NN}$ is a basis for $\clh_1$, and 
 \begin{equation} \label{eq:defn-Ml}
     M_0 \doteq 0, \quad M_\bfl \doteq \max\{ m \in \NN : l_m \ge 1\}
 \end{equation}
 is the order of the highest non-zero element of the sequence $\bfl$. For $m \in \NN_0$, we also write $H_m [ W_u]$ to denote a map from $\clh_1$ to $\RR$ evaluated by $H_{m}[W_u](x) \doteq H_{m}(W_u(x))$. 

 The following result is crucial for our paper. Since we have not identified it somewhere else in the literature, we prove it in Section \ref{sec:Wiener-Ito-chaos}.

\begin{lemma} \label{thm:hermite-expansion-general}
Consider the operator $f \in L^2(\clh_1,\gamma_Q :\clh_2) \cong L^2(\clh_1,\gamma_Q) \otimes \clh_2$, where $\gamma_Q$ is a Gaussian measure with covariance operator $Q$. Then, $f$ admits the generalized Hermite expansion
\begin{equation} \label{eq:Hermite_expansion}
\begin{split}
    f[x] = \sum_{i=1}^\infty \sum_{ \bfr \in \cll} c_{i,\bfr}  H_{\bfr}[x] \otimes v_i, \quad x \in \clh_1,
\end{split}
\end{equation}
with $H_\bfr$ as in \eqref{eq:Hermite_product} and $\{c_{i,\bfr}\}_{i \in \NN,\bfr \in \cll}$ is given by (with some abuse of notation) 
\begin{equation} \label{eq:coeff-hermite-general}
c_{i,\bfr} \doteq \frac{1}{\prod_{m=1}^\infty r_m!} \left\langle f, H_\bfr \otimes v_i \right\rangle_{L^{2}(\clh_1, \gamma_Q) \otimes \clh_{2}}.
\end{equation}
\end{lemma}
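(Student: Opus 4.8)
The plan is to prove this via a standard tensor-product argument: establish that $\{H_{\bfr}\}_{\bfr \in \cll}$ (suitably normalized) forms an orthonormal basis of $L^2(\clh_1,\gamma_Q:\RR)$, then tensor with an orthonormal basis $\{v_i\}_{i \in \NN}$ of $\clh_2$ to get an orthonormal basis of $L^2(\clh_1,\gamma_Q) \otimes \clh_2 \cong L^2(\clh_1,\gamma_Q:\clh_2)$, and finally expand $f$ in this basis, reading off the coefficients \eqref{eq:coeff-hermite-general} from the general formula for coefficients with respect to an orthonormal basis. The isomorphism $L^2(\clh_1,\gamma_Q:\clh_2) \cong L^2(\clh_1,\gamma_Q) \otimes \clh_2$ is already recorded in Section \ref{subsec:prelim-infinite}, so the content is entirely the real-valued statement.

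First I would handle orthonormality. For $\bfl, \bfr \in \cll$, the product $H_{\bfl} \cdot H_{\bfr}$ only involves finitely many coordinates $W_{u_m}$, so $\int_{\clh_1} H_{\bfl}(x) H_{\bfr}(x)\, \gamma_Q(dx) = \E \big[ \prod_m H_{l_m}(W_{u_m}) H_{r_m}(W_{u_m}) \big]$. Since $\{W_{u_m}\}_m$ is, by \eqref{eq:white-noise-orthog}, a family of i.i.d.\ standard Gaussians (jointly Gaussian, pairwise uncorrelated, hence independent), this factorizes over $m$, and by the one-dimensional orthogonality \eqref{eq:hermite-orthogonality} each factor equals $l_m!\,\delta_{l_m r_m}$. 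Hence $\langle H_{\bfl}, H_{\bfr}\rangle_{L^2(\clh_1,\gamma_Q)} = \big(\prod_m l_m!\big)\,\delta_{\bfl\bfr}$, which shows the $H_{\bfr}/\sqrt{\prod_m r_m!}$ are orthonormal and explains the normalizing factor $1/\prod_m r_m!$ appearing in \eqref{eq:coeff-hermite-general} (the $\|H_{\bfr}\|^2$ in the denominator when one writes $c_{i,\bfr} = \langle f, H_{\bfr}\otimes v_i\rangle / \|H_{\bfr}\otimes v_i\|^2$).

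The main obstacle is completeness: showing that finite linear combinations of the $H_{\bfr}$ are dense in $L^2(\clh_1,\gamma_Q:\RR)$. I would argue that the span of $\{H_{\bfr}\}$ equals the span of all polynomials in the variables $\{W_{u_m}\}_m$ (a triangular change of basis within each finite coordinate block), so it suffices to show that polynomials, equivalently the cylinder functions of the form $\exp\{i\sum_{m=1}^N t_m W_{u_m}\}$, are dense. Density of cylinder functions depending on finitely many coordinates follows because the $\sigma$-algebra generated by $\{W_{u_m}\}_{m\in\NN}$ is (up to $\gamma_Q$-null sets) all of $\clb(\clh_1)$ — indeed $x \mapsto (W_{u_m}(x))_m$ recovers $x$ $\gamma_Q$-a.s.\ via $x = \sum_m \lambda_m^{1/2} W_{u_m}(x) u_m$ when $\{u_m\}$ are the eigenvectors of $Q$ — together with a martingale/approximation argument in $L^2$, and density of polynomials in each finite-dimensional Gaussian $L^2(\RR^N,\gamma_N)$, which is classical (Proposition 5.1.3 of \cite{pipiras2017long}, invoked in the text for $N=1$; the multivariate case follows by taking products). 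This is the step one must state carefully; everything else is bookkeeping. One technical point worth a line: the basis $\{u_m\}$ used to define $W_{u_m}$ in \eqref{eq:Hermite_product} should be taken to be the eigenbasis of $Q$ (or one checks the construction is basis-independent), so that the coordinates are genuinely independent.

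Finally, with $\{H_{\bfr}/\sqrt{\prod r_m!} \otimes v_i\}$ an orthonormal basis of the tensor-product space, the generalized Parseval expansion gives $f = \sum_{i,\bfr} \langle f, H_{\bfr}\otimes v_i\rangle_{L^2(\clh_1,\gamma_Q)\otimes\clh_2}\, \big(\prod_m r_m!\big)^{-1} H_{\bfr}\otimes v_i$, which is exactly \eqref{eq:Hermite_expansion} with $c_{i,\bfr}$ as in \eqref{eq:coeff-hermite-general}, and convergence holds in $L^2(\clh_1,\gamma_Q:\clh_2)$; pointwise-in-$x$ statements, if desired, hold along the partial sums in the appropriate sense. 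This completes the proof.
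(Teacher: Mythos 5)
Your proposal is correct and follows essentially the same route as the paper: orthogonality via factorization over the coordinates $W_{u_m}$ together with the one-dimensional Hermite orthogonality \eqref{eq:hermite-orthogonality} and \eqref{eq:white-noise-orthog}, followed by completeness of the system $\{H_{\bfr}\otimes v_i\}$ and a Parseval-type expansion. The only difference is that you sketch the scalar completeness step (density of polynomials in the variables $W_{u_m}$, via cylinder functions and measurability of $x\mapsto (W_{u_m}(x))_m$) from scratch, whereas the paper outsources exactly this fact to Theorem 1.1.1 of \cite{Nua06book} or Theorem 9.7 of \cite{DaPr06}; both justifications are fine.
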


\textit{Weak convergence in infinite dimensions:} In this section, we present a metric on $\clp(\clh)$, the space of probability measures on $\clh$, that metrizes weak convergence. This metric was introduced by Gin\'{e} and Le\'{o}n in \cite{GinLeo80}. The recent write-up \cite{BasBurCamPec25} further clarifies the distinction between this distance and the metric $\rho_2$ used in \cite{Bou20}.

We denote by $\clc_b^k(\clh)$ the space of bounded, $\RR$-valued functions on $\clh$ admitting $k$ Fréchet derivatives, that is, $h \in \clc_b^k(\clh)$ if
\begin{equation} \label{eq:norm-def-cb2}
\| h \|_{\clc_b^k(\clh)}
\doteq
\sup_{x \in \clh} \left( \sum_{j=0}^k \| \operatorname{D}_F^j h(x) \|_{\op(\clh^{\otimes j},\RR)} \right)
< \infty,
\end{equation}
under the convention that $\| \operatorname{D}_F^0 h(x) \|_{\cll(\clh^0,\RR)} = |h(x)|$.

Then, for $\mu,\nu \in \clp(\clh)$ and $j \ge 1$, the $d_j$ metric on $\clp(\clh)$ is defined by
\begin{equation} \label{eq:d2metric}
d_j(\mu,\nu)
\doteq
\sup_{h \in \clc_b^j(\clh),\ \|h\|_{\clc_b^j(\clh)} \le 1}
\left|
\int_{\clh} h(x) (\mu(dx)-\nu(dx))
\right|.
\end{equation}
From \cite{GinLeo80} (Theorem 2.4), it is known that for every $j \ge 1$, the metric $d_j$ (and in particular $d_2$) metrizes weak convergence on $\clp(\clh)$. For $\clh$-valued random variables $X$ and $Z$, we shall write
\[
d_2(X,Z) \doteq d_2(\cll(X),\cll(Z)),
\]
where $\cll(\cdot)$ denotes the law of a random variable. Equivalently,
\[
d_2(X,Z)
=
\sup_{h \in \clc_b^2(\clh),\ \|h\|_{\clc_b^2(\clh)} \le 1}
\left|
\EE[h(X)]-\EE[h(Z)]
\right|.
\]

\textit{Isonormal Gaussian processes and contractions:}
Denote by $\mathfrak{H}$ an underlying separable Hilbert space, and define an isonormal Gaussian process, i.e., a centered family of Gaussian random variables $\left\{ W(h)
\colon h \in \mathfrak{H} \right\}$, defined on a complete probability space $(\Omega,\mathcal{F},P)$, such that
\begin{equation*}
\E{W(h_1)W(h_2)} = \left\langle h_1, h_2 \right\rangle_{\mathfrak{H}}, \qquad
h_1, h_2 \in \mathfrak{H},
\end{equation*}
where the $\sigma$-algebra $\mathcal{F}$ is generated by $W$. We show in Lemma \ref{pro:isonormal} in Section \ref{app:aux} that it is possible to construct an isonormal Gaussian process on $\mathfrak{H}$ with the same autocorrelation function as an $\clh$-valued Gaussian process. Denote by $\mfh^{\otimes n}$ the $n$-th tensor power of $\mfh$, and by $\mathfrak{H}^{\odot n}$ the $n$-fold symmetrized
tensor product of $\mathfrak{H}$, equipped with the norm $\sqrt{n !}\| \cdot \|_{\mfh^{\otimes n}}$. For $n \ge 0$ and a kernel $f \in \mathfrak{H}^{\odot n}$, we write $I_n(f)$ to denote the multiple Wiener-It\^{o} integral of order $n$ of $f$; see \cite{Nua06book}.

Let $\left\{ \phi_k \colon k \geq 1 \right\}$ be an
orthonormal basis of $\mathfrak{H}$. Fix $f \in \mathfrak{H}^{\odot n}$ and $g
\in \mathfrak{H}^{\odot m}$, then for every $l = 0,\ldots, n \wedge m$, the
$l$-th contraction of $f$ and $g$ is defined as
\begin{equation*}
f \otimes_l g = \sum_{i_1, \ldots, i_l =1}^{\infty} \left\langle
  f,\phi_{i_1}\otimes \cdots \otimes \phi_{i_l} \right\rangle_{\mathfrak{H}^{\otimes l}}
\otimes \left\langle
  g,\phi_{i_1}\otimes \cdots \otimes \phi_{i_l} \right\rangle_{\mathfrak{H}^{\otimes l}} \in \mathfrak{H}^{\otimes
  (n+m-2l)}.
\end{equation*}
See also Appendix B in \cite{nourdin2012normal} for more details on contractions.

Since several Hilbert spaces appear in the present work, to aid with presentation we fix the notations for their respective bases. These are summarized in the table below.
\begin{table}[h!]
\centering
\begin{tabular}{ll}
\toprule
Space & Orthonormal basis \\
\midrule
$\mathcal{H}_1$ (with $X_k$ random variables in $\clh_1$) & $\{e_k\}_{k\in\mathbb{N}}$ \\
$\mathcal{H}_2$ (with $G: \clh_1 \to \clh_2$) & $\{v_k\}_{k\in\mathbb{N}}$ \\
$Q^{1/2}(\mathcal{H}_1)$ (Cameron-Martin space associated to $\clh_1, \gamma_Q$) & $\{u_k\}_{k\in\mathbb{N}}$ \\
$\mathfrak{H}$ (underlying Hilbert space for the isonormal Gaussian process)  & $\{\phi_k\}_{k\in\mathbb{N}}$ \\
\bottomrule
\end{tabular}
\end{table}

\section{Main results} \label{se:mainresults}
We state in Section \ref{subsec:statement-main} our main results, providing a central limit theorem for the quantity in \eqref{eq:partial_sum_G} and a continuous-time version of the same result. The statements are followed by a discussion on our assumptions in Section \ref{subse:examples-hilbert}. Sections \ref{subseexpautocorop} and \ref{subseexpautocovop} provide examples for our assumptions.

\subsection{Statements} \label{subsec:statement-main}

Recall that
\begin{equation} \label{cov_op_Q}
    Q:\clh_1 \to \clh_1, 
    \hspace{0.2cm} 
    Q[\cdot] = \E \left( \langle X_1, \cdot \rangle_{\clh_1} X_1 \right)
\end{equation}
is the nondegenerate covariance operator of $X_1$. Central to our statements is the condition $\E \| G[X_1] \|^2_{\clh_2} < \infty$ which can be recast as $G \in L^2(\clh_1, \gamma_Q:\clh_2)$, where $\gamma_Q$ is the (unique) Gaussian measure on $\clh_1$ associated with the covariance operator $Q$ and mean-zero. 
We state our theorems for the important case $\dim(\clh_1) = \infty$, but the results hold even with $\dim(\clh_1) < \infty$. When $\operatorname{dim}(\clh_1) < \infty$, the results here recover many existing theorems, e.g., Theorem 7.2.4 of \cite{nourdin2012normal} and Theorem 1.1 of \cite{nourdin2011quantitative} that goes back to \cite{breuer1983central}.

Prior to stating our main theorem, we introduce two sets of assumptions. Either of those assumption sets leads to a Breuer-Major type theorem on infinite-dimensional Hilbert spaces. The assumptions are stated in terms of the autocovariance operator $\Gamma(v)$ in \eqref{eq:Gamma-op}, the autocorrelation operator $R(v)$ in \eqref{eq:R(v)-op} and the Hermite rank of $G$, formally defined in Definition \ref{def:Hermiterank}. 
The assumptions are followed by a discussion.

\begin{assumption}\label{thm:ass-2.2}
Suppose $G \in L^2(\clh_1,\gamma_{Q}:\clh_2)$ with Hermite rank $q \ge 1$ and that 
    \begin{equation} \label{thm:ass-2.0-eq-Gamma}
    \sum_{v \in \ZZ} \| R(v) \|_{\op(\clh_1)}^{q} < \infty,
    \end{equation}
where $R(v)$ is defined in \eqref{eq:R(v)-op}.
\end{assumption}

We introduce the second sufficient assumption. For \(p\ge q\) and \(\mathbf j=(j_1,\ldots,j_p)\in \mathbb N^p\), define
\[
\ell_m(\mathbf j) \doteq \#\{r\in\{1,\ldots,p\}:j_r=m\},\qquad m\ge 1,
\]
and let \(\ell(\mathbf j):=(\ell_m(\mathbf j))_{m\ge1}\). Moreover, for each \(p\ge q\), define the linear map \(C_p\) by, acting on the basis $\{v_i\}$, 
\begin{equation}\label{eq:defCp} 
C_p v_i \doteq \sum_{\mathbf j=(j_1,\ldots,j_p)\in\mathbb N^p} \frac{\prod_{m\ge1}\ell_m(\mathbf j)!}{p!}\, c_{i,\ell(\mathbf j)} \,e_{j_1}\otimes\cdots\otimes e_{j_p},
\end{equation}
where the coefficients $c_{i,\ell(\mathbf j)}$ are the Hermite coefficients induced by Lemma \ref{thm:hermite-expansion-general}.
\begin{assumption} \label{thm:ass-2.1}
Suppose \(G\in L^2(\clh_1,\gamma_Q:\clh_2)\) has Hermite rank \(q\ge1\).
We assume that, for every \(p\ge q\), the map \(C_p\) defined in \eqref{eq:defCp} extends to a bounded linear operator $C_p\in \mathcal{L}(\mathcal H_2,\mathcal H_1^{\otimes p})$ such that $C_p(\mathcal H_2)\subseteq \operatorname{Ran}(Q_p^{1/2})$, and that
\begin{align}
    \sum_{v \in \ZZ} \| \Gamma(v) \|_{\mathcal{S}_1(\clh_1)}^{q} < \infty, \label{thm:ass-2.1-eq-Gamma} \\
    \sum_{p=q}^{\infty}
p!\,
\bigl\|Q_p^{-1/2}C_p\bigr\|_{\operatorname{op}(\mathcal{H}_2,\mathcal{H}_1^{\otimes p})}^2
<\infty, \label{thm:ass-2-2:different}
\end{align}
where $\Gamma(v)$ is defined in \eqref{eq:Gamma-op} and $Q_p^{-1/2}$ in \eqref{eq:def-Qp}.
\end{assumption}

\begin{theorem}[Breuer-Major for Hilbert space-valued random variables] \label{thm:main}
Let $\{X_{k}\}_{k\in \ZZ}$ be a zero-mean, stationary Gaussian process with values in $\clh_1$ and covariance operator $Q$. Suppose either Assumption \ref{thm:ass-2.2} or \ref{thm:ass-2.1}. Denote by $Z$ the centered Gaussian random variable on $\clh_2$ with covariance operator 
\begin{equation} \label{eq:def-T}
    \covop_{Z} \doteq
    \sum_{v=1}^{\infty} \bigg( \E G[X_1] \otimes G[X_{v+1}] + 
    \E G[X_{v+1}] \otimes G[X_1] \bigg) + 
    \E G[X_1] \otimes G[X_1],
\end{equation}
and suppose that $Z$ is nondegenerate. Then, $S_n \xrightarrow{d} Z$, as $n \to \infty$.
\end{theorem}

In Theorem \ref{th:quantitave} in Section \ref{app:quant_version}, we also give a quantitative version of Theorem \ref{thm:main} and its proof. 

Theorem \ref{thm:main} and the used proof techniques allow us also to generalize the result to a continuous-time version. Define
\begin{equation} \label{eq:Vn-defn-cont}
    V_n (t) \doteq \frac{1}{\sqrt{n}} \sum_{k=1}^{\lfloor nt \rfloor} G[X_k], \quad t \in [0,1],
\end{equation}
where $\{ X_k \}_{k \in \ZZ}$, $X_k \in \clh_1$, and $G : \clh_1 \to \clh_2$. Here, $V_n$ is an element of $L^2([0,1]:\clh_2) \cong L^2([0,1]) \otimes \clh_2$, which is again a Hilbert space. 
Recall that the covariance operator $\covop_B$ of a Brownian motion $B$ in $L^2([0,1])$ is given by the integral operator
\begin{equation} \label{eq:BrownianMotion_int}
    \covop_B: L^2([0,1]) \to L^2([0,1]), \hspace{0.2cm}
    \covop_B[f] = \int_0^1 f(t)  \kappa(s,t) dt
    \hspace{0.2cm} \text{ with } \hspace{0.2cm}
    \kappa(s,t) \doteq s \wedge t.
\end{equation}
For tensorization properties of Gaussian measures, we refer the reader to \cite{CarChe79}.

\begin{theorem} \label{th:cont_case}
    Let $\{V_n(t)\}_{t \in [0,1]}$ be as in \eqref{eq:Vn-defn-cont} with $\{X_{k}\}_{k\in \ZZ}$ a zero-mean, stationary Gaussian process with values in $\clh_1$ and covariance operator $Q$. Suppose either Assumption \ref{thm:ass-2.2} or \ref{thm:ass-2.1}. 
    Assume that the centered Gaussian random variable $Z$ with covariance operator $\covop_{Z}$ given in \eqref{eq:def-T} is nondegenerate. Then, 
    \begin{equation*}
    V_n \xrightarrow{d} W,  \hspace{0.2cm} \text{ as } n \to \infty,
    \end{equation*}
    in $L^2([0,1]) \otimes \clh_2$, where $W$ is the centered Gaussian element in $L^2([0,1]) \otimes \clh_2$ with covariance operator $\covop_W: L^2([0,1]) \otimes \clh_2 \to L^2([0,1]) \otimes \clh_2$ given by
    \begin{equation*} \label{eq:limiting_cov_op_cont_case}
    \covop_{W} = \covop_{B} \otimes \covop_{Z},
    \end{equation*}
    where $B = \{B_t ~|~ t \in [0,1]\}$ denotes a standard Brownian motion in $L^2([0,1])$ with covariance operator $\covop_{B}$.
\end{theorem}

\begin{remark}
    The limit $W$ in Theorem \ref{th:cont_case} may equivalently be
    regarded as an $\clh_2$-valued Brownian motion with covariance operator
    $\clt_Z$. As a centered Gaussian element of $L^2([0,1]:\clh_2)$, its
    covariance kernel is given by
    \begin{equation*}
        \E \big[W(s) \otimes W(t)\big] = (s \wedge t)\clt_Z,
        \qquad s,t\in[0,1].
    \end{equation*}
\end{remark}

\subsection{Discussion on assumptions} \label{subse:examples-hilbert}

The two assumptions in Theorem \ref{thm:main} provide different sufficient conditions for a Breuer-Major theorem in infinite dimensions. 
Assumption \ref{thm:ass-2.2} is formulated in terms of the autocorrelation operators $R(v)$ in \eqref{eq:R(v)-op}, whereas Assumption \ref{thm:ass-2.1} is formulated in terms of the autocovariance operators $\Gamma(v)$ together with an additional regularity condition on the interplay between $G$ and the covariance operator $Q$ of the underlying Gaussian stationary sequence (cf. \eqref{thm:ass-2-2:different}). 

This distinction is specific to the infinite-dimensional setting. Indeed, when $\dim(\clh_1)<\infty$, then the matrix Schatten norms are all equivalent. Moreover, when $Q$ is positive definite, then Condition \eqref{thm:ass-2.1-eq-Gamma} is equivalent to Condition \eqref{thm:ass-2.0-eq-Gamma}. In particular, conjectured conditions \eqref{eq:conj-cond} and \eqref{thm:ass-2.1-eq-Gamma} are equivalent, and so whenever Assumption \ref{thm:ass-2.1} holds, so does Assumption \ref{thm:ass-2.2}, allowing for a more general class of operators $G$.

In infinite dimensions, however, the two Conditions \eqref{thm:ass-2.0-eq-Gamma} and \eqref{thm:ass-2.1-eq-Gamma} are no longer equivalent. In this case, we only have that
\begin{equation} \label{eq:A1implesA2part2}
    \| \Gamma(v) \|_{\mathcal{S}_1(\clh_1)}
    =
    \| Q^{\frac{1}{2}} R(v) Q^{\frac{1}{2}} \|_{\mathcal{S}_1(\clh_1)}
    \leq
    \| Q^{\frac{1}{2}} \|_{\mathcal{S}_2(\clh_1)}^2 
    \| R(v) \|_{\op(\clh_1)}
    =
    \tr(Q) 
    \| R(v) \|_{\op(\clh_1)} .
\end{equation}
Moreover, under Assumption \ref{thm:ass-2.1}, the summability condition \eqref{thm:ass-2.1-eq-Gamma} alone is not sufficient in general: one also needs the condition \eqref{thm:ass-2-2:different} on $G$ in order to ensure that the limiting covariance operator in \eqref{eq:def-T} is trace-class.

The following example demonstrates the necessity of condition \eqref{thm:ass-2-2:different}. More precisely, it gives a process $\{X_k\}_{k\in\ZZ}$ satisfying
\[
    \sum_{v\in\ZZ} \|\Gamma(v)\|_{\mathcal S_1(\clh_1)} < \infty
\]
and a map $G\in L^2(\clh_1,\gamma_Q:\clh_2)$ for which the conclusion of Theorem \ref{thm:main} cannot hold for the normalized sums of $\{G[X_k]\}_{k\in\ZZ}$. This happens because, in the absence of condition \eqref{thm:ass-2-2:different}, the candidate limiting covariance operator in \eqref{eq:def-T} needs to be in $\mathcal S_1(\clh_2)$.

\begin{example} Take $\clh_1 = \clh_2 = \ell^2(\NN)$ with the canonical orthonormal bases $\{e_r\}_{r\ge1}$ in $\clh_1$ and $\{v_i\}_{i\ge1}$ in $\clh_2$. Let, for $r \ge 1$
\[
Qe_r \doteq \lambda_r e_r,
\qquad
Ae_r \doteq \alpha_r e_r, \qquad \text{where }\lambda_r \doteq 2^{-r},
\qquad
\alpha_r \doteq 1-\frac{1}{(r+1)^2},
\]
Then $Q$ is self-adjoint, positive, and trace-class, while $A$ is bounded, self-adjoint, and $\|A\|_{\op}=\sup_{r\ge1}\alpha_r=1$. Define
\[
\Sigma_\varepsilon \doteq Q-AQA, \quad \text{with }\Sigma_\varepsilon e_r
=
\lambda_r(1-\alpha_r^2)e_r.
\]
Hence $\Sigma_\varepsilon$ is also self-adjoint, positive, and trace-class. Let
$\{\varepsilon_k\}_{k\in\ZZ}$ be an i.i.d.\ centered Gaussian sequence in $\clh_1$ with covariance
operator $\Sigma_\varepsilon$, and define
\[
X_k\doteq \sum_{m=0}^\infty A^m\varepsilon_{k-m},
\qquad k\in\ZZ.
\]
This series converges in $L^2(\Omega:\clh_1)$, because
\begin{equation*}
    \sum_{m=0}^\infty \EE\|A^m\varepsilon_0\|_{\clh_1}^2
=
\sum_{m=0}^\infty \tr\!\bigl(A^m\Sigma_\varepsilon A^m\bigr) =
\sum_{r=1}^\infty \lambda_r(1-\alpha_r^2)\sum_{m=0}^\infty \alpha_r^{2m} =
\sum_{r=1}^\infty \lambda_r
<\infty.
\end{equation*}
Therefore $\{X_k\}_{k\in\ZZ}$ is a centered stationary Gaussian sequence in $\clh_1$, and it satisfies
\[
X_k=AX_{k-1}+\varepsilon_k
\qquad\text{in }L^2(\Omega:\clh_1),
\]
with covariance operator
\[
\Gamma(v) = \EE[X_v\otimes X_0]=QA^{|v|},
\qquad v\in\ZZ,
\]
In particular, $\Gamma$ is summable in trace norm. Since $\Gamma(v)$ is positive and diagonal,
\[
\sum_{v \in \ZZ} \|\Gamma(v)\|_{\mathcal S_1 (\ell^2(\NN))}
= \sum_{v \in \ZZ}
\tr(\Gamma(v))
=
\sum_{v \in \ZZ} \sum_{r=1}^\infty 2^{-r}\alpha_r^{|v|} 
= \sum_{r=1}^\infty 2^{-r}\frac{1+\alpha_r}{1-\alpha_r}  = \sum_{r=1}^\infty 2^{-r}\bigl(2(r+1)^2-1\bigr)<\infty,
\]
where we used that $\frac{1+\alpha_r}{1-\alpha_r}=2(r+1)^2-1$. On the other hand, one can also see that $\|R(v)\|_{\op(\clh_1)}
=
\sup_{r\ge1}\alpha_r^{|v|}
=
1$, implying that $\sum_{v\in\ZZ}\|R(v)\|_{\op(\clh_1)}=\infty$

We show that Theorem \ref{thm:main} cannot hold, by illustrating that the candidate limiting operator cannot be a covariance operator, i.e., trace-class.
Let $\gamma_Q$ denote the centered Gaussian measure on $\clh_1$ with covariance operator $Q$.
We define
\[
G(x) \doteq \sum_{i=1}^\infty \frac{1}{i\sqrt{\lambda_i}} 
\langle x,e_i\rangle_{\ell^2(\NN)} v_i,
\]
for $\gamma_Q$-almost every $x\in\clh_1$. This defines an element of
$L^2(\clh_1,\gamma_Q:\clh_2)$, because
\[
\EE\|G(x)\|_{\ell^2(\NN)}^2
=
\sum_{i=1}^\infty \frac{1}{i^2\lambda_i} 
\EE|\langle x,e_i\rangle_{\ell^2(\NN)}|^2 =
\sum_{i=1}^\infty \frac{1}{i^2\lambda_i} \lambda_i =
\sum_{i=1}^\infty \frac{1}{i^2}
<\infty.
\]
Since each $X_k$ has law $\gamma_Q$, the random variables
\[
Y_k\doteq G(X_k)
=
\sum_{i=1}^\infty \frac{1}{i\sqrt{\lambda_i}} 
\langle X_k,e_i\rangle_{\ell^2(\NN)} v_i,
\qquad k\in\ZZ,
\]
are well defined in $L^2(\Omega:\clh_2)$. The sequence $\{Y_k\}_{k\in\ZZ}$ is centered,
stationary, and Gaussian in $\clh_2$. Its covariance operators are diagonal since
\begin{equation}
    \EE[Y_0\otimes Y_v]
=
\sum_{i=1}^\infty \frac{1}{i^2\lambda_i}
\langle \Gamma(v)e_i,e_i\rangle_{\ell^2(\NN)} (v_i\otimes v_i) =
\sum_{i=1}^\infty \frac{1}{i^2}\alpha_i^{|v|} (v_i\otimes v_i).
\end{equation}

Formally, the covariance operator of the limit of the renormalized sums $\frac{1}{\sqrt n}\sum_{k=1}^n Y_k$ would be given by \eqref{eq:def-T}, i.e., 
\[
\clt
=
\sum_{i=1}^\infty \frac{1}{i^2}\sum_{v\in\ZZ}\alpha_i^{|v|} (v_i\otimes v_i),
\]
Since
\[
\frac{1}{i^2} \sum_{v\in\ZZ}\alpha_i^{|v|}
= \frac{1}{i^2}
\frac{1+\alpha_i}{1-\alpha_i}
= \frac{1}{i^2}
2(i+1)^2-1
=
\frac{1}{i^2}\bigl(2(i+1)^2-1\bigr)
\sim 2
\]
it follows that $\clt\notin \mathcal S_1(\clh_2)$, and so no
$\clh_2$-valued centered Gaussian random variable having covariance operator
$\clt$.
\end{example}

\subsection{Examples on the autocorrelation operator}
\label{subseexpautocorop}

We present several examples of processes $\{X_k\}_{k \in \ZZ}$ and identify sufficient conditions such that they satisfy condition \eqref{thm:ass-2.2}.
As a first example, we show that our results trivially extend (in a quantitative way) the standard CLT for $\clh_1$-valued subordinated Gaussian processes. In particular, we recover Theorem 2.7 in \cite{Bosq2000:Linear}.

\begin{example}[i.i.d. case]
    Let $\{X_k\}_{k \in \ZZ}$ be a sequence of zero-mean, Gaussian, i.i.d. random variables with covariance operator $Q$, and let $G \in L^2(\clh_1, \gamma_Q:\clh_2)$ be such that $Q$ is nondegenerate, $\clt =  \E G[X_1] \otimes G[X_1]$ is a nondegenerate covariance operator and $\E \| G[X_1] \|^2_{\clh_2} < \infty$. Then, since $\Gamma(v)=0$ for all $v \neq 0$ and $\Gamma(0)=Q$, we have
    \begin{equation*}
        R(v)=Q^{-1/2}\Gamma(v)Q^{-1/2}=0, \quad v \neq 0,
        \qquad
        R(0)=Q^{-1/2}QQ^{-1/2}=\Id_{\clh_1},
    \end{equation*}
    and therefore
    \begin{equation*}
        \sum_{v \in \ZZ} \| R(v) \|_{\op(\clh_1)}^{q}
        =
        \|R(0)\|_{\op(\clh_1)}^{q}
        =
        \|\Id_{\clh_1}\|_{\op(\clh_1)}^{q}
        =
        1
        <
        \infty.
    \end{equation*}
    Hence, by \eqref{eq:A1implesA2part2}, both Assumption \ref{thm:ass-2.2} and condition \eqref{thm:ass-2.1-eq-Gamma} trivially hold for all $q \ge 1$. By Theorem \ref{thm:main}, it follows that, as $n \to \infty$,
    \begin{equation*}
        \frac{1}{\sqrt{n}} \sum_{k=1}^n G[X_k] \xrightarrow{d} \cln, \quad \clt_\cln \doteq \E G[X_1] \otimes G[X_1].
    \end{equation*}
\end{example}

Example \ref{ex:separable-spacetime} studies models whose spatiotemporal autocovariance structure decomposes into the product of two functions, one encoding the decay in time and one in space. Due to their simplicity in modeling and estimation problems, these models are popular in applications; see \cite{constantinou2017testing} and the references therein.

\begin{example}[Separable autocovariance]
\label{ex:separable-spacetime}
Assume that there exists a sequence \(r:\mathbb Z\to \mathbb R\), with
\(r(0)=1\), such that the lag-\(v\) covariance operator satisfies
\[
\Gamma(v)=r(v)Q,\qquad v\in\mathbb Z,
\]
where \(Q\) is the covariance operator of \(X_1\). Assume further that
\(Q\) is nondegenerate.

For \(h,g\in \mathcal{H}_1\), by the definition of
\(R(v)\) and \eqref{eq:white-noise-def},
\[
\begin{aligned}
\langle R(v)h,g\rangle_{\mathcal{H}_1}
&=
\mathbb E\big[W_h(X_1)W_g(X_{1+v})\big] \\
&=
\sum_{i,j = 1}^\infty
\lambda_i^{-1/2}\lambda_j^{-1/2}
\langle h,e_i\rangle_{\mathcal{H}_1}
\langle g,e_j\rangle_{\mathcal{H}_1}
\mathbb E\big[
\langle X_1,e_i\rangle_{\mathcal{H}_1}
\langle X_{1+v},e_j\rangle_{\mathcal{H}_1}
\big] \\
&= \sum_{i,j = 1}^\infty
\lambda_i^{-1/2}\lambda_j^{-1/2}
\langle h,e_i\rangle_{\mathcal{H}_1}
\langle g,e_j\rangle_{\mathcal{H}_1}  \langle \Gamma(v)e_i,e_j\rangle_{\mathcal{H}_1} \\
&= \sum_{i,j = 1}^\infty
\lambda_i^{-1/2}\lambda_j^{-1/2}
\langle h,e_i\rangle_{\mathcal{H}_1}
\langle g,e_j\rangle_{\mathcal{H}_1}
r(v)\lambda_i\delta_{ij} \\
&=
r(v)\sum_{i = 1}^\infty
\langle h,e_i\rangle_{\mathcal{H}_1}
\langle g,e_i\rangle_{\mathcal{H}_1} \\
&=
r(v)\langle h,g\rangle_{\mathcal{H}_1}.
\end{aligned}
\]
Since this identity holds for all \(h,g\in {\mathcal{H}_1}\), it follows that $R(v)=r(v)I_{\mathcal{H}_1}$ and so $\|R(v)\|_{\mathrm{op}({\mathcal{H}_1})}=|r(v)|$, hence Assumption~3.1 is satisfied whenever
\[
\sum_{v\in\mathbb Z}|r(v)|^q<\infty.
\]
\end{example}

We present a concrete application of Example \ref{ex:separable-spacetime}. Let $\clh_1=L^2(D)$, where $D\subset\R^d$ is a bounded domain, and let
$\{X_k\}_{k\in\ZZ}$ be a centered, stationary Gaussian process in $\clh_1$ with
separable space--time covariance, i.e., there exists a sequence $r$ as in Example \ref{ex:separable-spacetime} and
a symmetric positive definite kernel $C:D\times D\to\R$
such that for all $s,t\in D$ and $v\in\ZZ$,
\begin{equation}
\label{eq:sep-cov-kernel}
\Cov\big(X_0(s),X_v(t)\big) = r(v) C(s,t).
\end{equation}
Let $Q:L^2(D)\to L^2(D)$ be the integral operator induced by $C$,
\begin{equation}
(Qf)(t) \doteq \int_D C(s,t) f(s) ds,\qquad f\in L^2(D),
\end{equation}
and assume that $Q$ is positive, trace-class, and self-adjoint.
Then, the process $\{X_k\}$ can be modeled as in Example \ref{ex:separable-spacetime}.

The following example identifies an equivalent condition to \eqref{thm:ass-2.1-eq-Gamma} for \textit{m-dependent processes}. While $m$-dependence is a simple relaxation of independence, most time series models are not actually $m$-dependent, but can be approximated by an $m$-dependent sequence; see Chapter 16.1 in \cite{horvath2012inference} for some examples. In particular, $m$-dependence does not require the underlying model to admit a linear representation.

\begin{example}[\(m\)-dependent case]
\label{ex:m-dependent}
Let \(\{X_k\}_{k\in\mathbb Z}\) be an \(\mathcal{H}_1\)-valued, stationary, centered
Gaussian, \(m\)-dependent sequence. That is, for every \(\ell\in\mathbb Z\), the
\(\sigma\)-fields generated by \(\{X_k:k\le \ell\}\) and by
\(\{X_k:k\ge \ell+m+1\}\) are independent.

We verify Assumption~3.1. 
For all \(h,g\in \mathcal{H}_1\), the Cauchy--Schwarz inequality and stationarity imply
\[
\begin{aligned}
\big|\langle R(v)h,g\rangle_{H_1}\big|
&=
\big|
\mathbb E\big[W_h(X_1)W_g(X_{1+v})\big]
\big| \\
&\le
\Big(\mathbb E\big[W_h(X_1)^2\big]\Big)^{1/2}
\Big(\mathbb E\big[W_g(X_{1+v})^2\big]\Big)^{1/2} \\
&=
\|h\|_{\mathcal{H}_1}\|g\|_{\mathcal{H}_1}.
\end{aligned}
\]
Hence $\|R(v)\|_{\mathrm{op}(\mathcal{H}_1)}\le 1$ for $v\in\mathbb Z$.

Moreover, if \(|v|>m\), then \(X_1\) and \(X_{1+v}\) are independent by
\(m\)-dependence. Since both \(W_h(X_1)\) and \(W_g(X_{1+v})\) are centered
Gaussian random variables, it follows that
\[
\mathbb E\big[W_h(X_1)W_g(X_{1+v})\big]
=
\mathbb E[W_h(X_1)]\,\mathbb E[W_g(X_{1+v})]
=
0.
\]
Therefore, $R(v)=0$ whenever $|v|>m$. 

Combining the two observations, for every \(q\ge1\),
\[
\sum_{v\in\mathbb Z}\|R(v)\|_{\mathrm{op}(\mathcal{H}_1)}^q
=
\sum_{|v|\le m}\|R(v)\|_{\mathrm{op}(\mathcal{H}_1)}^q
\le
2m+1
<\infty.
\]
Thus Assumption~3.1 holds. Consequently, for every
\(G\in L^2(\mathcal{H}_1,\gamma_Q;\mathcal{H}_2)\) with Hermite rank \(q\ge1\), Theorem~3.1 applies
provided that the limiting covariance operator
\[
T_Z
=
\sum_{v=1}^m
\left(
\mathbb E\,G[X_1]\otimes G[X_{1+v}]
+
\mathbb E\,G[X_{1+v}]\otimes G[X_1]
\right)
+
\mathbb E\,G[X_1]\otimes G[X_1]
\]
is nondegenerate.
\end{example}

A stationary sequence $\{X_k\}_{k \in \ZZ}$ of $\clh_1$-valued random variables is called an AutoRegressive Hilbertian process of order one (or functional autoregressive process) (ARH(1)) associated with $(\mu, \veps,\psi)$ if
\begin{equation}\label{eq:ARH1-def}
    X_k - \mu = \psi(X_{k-1} - \mu) + \veps_k, \quad k \in \ZZ,
\end{equation}
where $\veps = \{\veps_k: k \in \ZZ\}$ is an $\clh_1$-valued white noise, $\mu \in \clh_1$, and $\psi \in \mathcal{S}_2(\clh_1)$. Without loss of generality, we set $\mu = 0_{\clh_1}$. Let, for simplicity, $\psi$ be a compact symmetric operator that is diagonalized by the same basis as $Q$, that is, 
\begin{equation}\label{def:rho-compact-symmetric}
    \psi = \sum_{j=0}^{\infty} \alpha_j  e_j \otimes e_j, \quad \text{or} \quad \psi(e_j) = \alpha_j e_j, \quad \psi^{|v|} = \sum_{j=1}^{\infty} \alpha_j^{|v|}  e_j \otimes e_j, \quad v \in \ZZ,
\end{equation}
where $\alpha_j$ is a decreasing, positive sequence with $\alpha_1< 1$ and $\lim_{j \to \infty} \alpha_j = 0$. 
The case of a general ARH($p$) series is  presented in the Section \ref{subseexpautocovop}.  

Several options for the noise $\{\veps_k\}$ are possible. Here, we let $\{W_t: t \in \RR\}$ be a Brownian motion in $L^2([0,1])$ and
\begin{equation} \label{eq:vareps:WnWn}
    \veps_k(\cdot) \doteq W_{k+\cdot} - W_{k} \in L^2([0,1]), \quad k \in \ZZ.
\end{equation}
In particular, $\{\veps_k\}$ is a Gaussian sequence.

\begin{example}[ARH(1) process] \label{example1_cond_ARH1}
Suppose $\{X_{k}\}_{k \in \ZZ}$ is an ARH(1) process associated with $(0_{\clh_1}, \veps, \psi)$  defined in \eqref{eq:ARH1-def}--\eqref{eq:vareps:WnWn}. Then, from Theorem 3.2 in \cite{Bosq2000:Linear},
\begin{equation*} \label{eq:correlation_ARH1}
    R(v) = \psi^{|v|}, \quad  v \in \ZZ,
\end{equation*}
where the equality follows and crucially since $Q$ and $\psi$ are assumed to be diagonalized by the same eigenbasis, hence the two operators commute. For $q \ge 1$,
\begin{equation} \label{eq:S_X_ARH1}
    \sum_{v \in \ZZ} \| R(v) \|_{\op(\clh_1)}^q = \sum_{v \in \ZZ} \|\psi\|_{\op(\clh_1)}^{q |v|} 
    < \infty.
\end{equation}
This says that Assumption \ref{thm:ass-2.2} holds and hence so does Theorem \ref{thm:main} for any operator $G$ with Hermite rank $q \ge 1$ such that $\clt_Z$ is a nondegenerate operator and the inequality in \eqref{eq:S_X_ARH1} is true. 
\end{example}

The following family of examples shows how Assumption \ref{thm:ass-2.2} can be ensured by some simplified conditions in the case of diagonal Karhunen--Lo\`eve type expansions.

\begin{example}[Karhunen--Lo\`eve Gaussian processes]
\label{ex:KL-diagonal}
Fix a positive summable sequence $\{\lambda_m\}_{m\ge 1} \in \ell^1(\NN)$, and let $\{\xi_{k,m}\}_{k\in\ZZ, m\ge 1}$ be a centered Gaussian family such that for each $m$, the $\RR$-valued process $\{\xi_{k,m}\}_{k\in\ZZ}$ is stationary with
\begin{equation}
\E[\xi_{0,m}\xi_{v,m'}] = \delta_{m,m'} \rho_m(v),\qquad v\in\ZZ,
\end{equation}
and $\rho_m(0) = 1$ for all $m \in \NN$.
Define the $\clh_1$-valued stationary Gaussian process
\begin{equation}
\label{eq:KL-process}
X_k \doteq \sum_{m=1}^\infty \sqrt{\lambda_m} \xi_{k,m} e_m,\qquad k\in\ZZ.
\end{equation}
Then $X_k$ is well-defined in $L^2(\Omega:\clh_1)$ and has covariance operator
$Q=\sum_{m\ge 1}\lambda_m  (e_m\otimes e_m) \in \mathcal{S}_1(\clh_1)$.
Moreover, its lag-$v$ autocovariance operator $\Gamma(v)\doteq \E[X_v\otimes X_0]$
is diagonal in the basis $\{e_m\}$ and given by
\begin{equation}
\label{eq:Gamma-diagonal}
\Gamma(v) = \sum_{m=1}^\infty \lambda_m \rho_m(v) (e_m\otimes e_m), \quad \text{so that } R(v)  = \sum_{\lambda_m>0}\rho_m(v)(e_m\otimes e_m),
\end{equation}
where $Q^{-1/2}$ is understood on $(\ker Q)^\perp$. 
In particular, if the sequence $\{\left(\sup_{m>0}|\rho_m(v)|\right)^q\}_{v \in \ZZ}$ is summable, then 
\[
\sum_{v\in\ZZ}\|R(v)\|_{\op(\clh_1)}^q
\le \sum_{v \in \ZZ} \left(\sup_{m>0}|\rho_m(v)|\right)^q  
<\infty.
\]
\end{example}

\subsection{Examples on the autocovariance operator} \label{subseexpautocovop}

We continue with an example for processes $\{X_n\}_{n \in \ZZ}$ admitting a linear representation. As pointed out in Section 7 of \cite{Bosq2000:Linear}, such linear representations can be interpreted as Wold decompositions. In finite dimensions, it is well known that any weakly stationary sequence admits a Wold decomposition. We consider here the purely stochastic case. 

Let $\{\varepsilon_n\}_{n\in\ZZ}$ be a (second-order) Gaussian white noise in $\clh_1$ with
$\E[\varepsilon_n]=0$ and covariance operator
\begin{equation}
C_{\varepsilon} \doteq \E(\varepsilon_0\otimes \varepsilon_0)\in \mathcal{S}_1(\clh_1),
\qquad \|C_\varepsilon\|_{\mathcal S_1(\clh_1)}<\infty.
\end{equation}
Assume that $\{X_n\}_{n\in\ZZ}$ admits the linear representation
\begin{equation} \label{eq:linear-series}
X_n=\sum_{j=0}^\infty A_j[\varepsilon_{n-j}],\qquad n\in\ZZ,
\end{equation}
where $A_j\in\mathcal L(\clh_1)$ are absolutely summable in operator norm when raised to the power $2q/ (q+1)$, where $q$ is the Hermite rank of the operator $G$, i.e.,
\begin{equation}\label{eq:abs-summ-Aj}
\sum_{j=0}^{\infty}
\|A_j\|_{\op(\mathcal H_1)}^{2q/(q+1)}
<\infty .
\end{equation}

\begin{example}[Linear process] \label{example2_cond}
Assume that \(q\in\mathbb N\). For \(v\ge 0\), the lag-\(v\)
covariance operator satisfies
\[
\Gamma(v)
=
\E(X_{1+v}\otimes X_1)
=
\sum_{j=0}^{\infty} A_{j+v}C_\varepsilon A_j^*,
\]
and \(\Gamma(v)=\Gamma(-v)^*\) for \(v<0\). We use the convention that
\(A_m=0\) whenever \(m<0\). Then, for every \(v\in\mathbb Z\),
\begin{align}
\sum_{v\in\mathbb Z}
\|\Gamma(v)\|_{\mathcal S_1(\mathcal H_1)}^q
&\le   \sum_{v\in\ZZ} \left( \sum_{j=0}^\infty \|A_{j+v}C_\varepsilon A_j^*\|_{\mathcal S_1(\clh_1)} \right)^q \notag \\
&\le
\|C_\varepsilon\|_{\mathcal S_1(\mathcal H_1)}^q
\sum_{v\in\mathbb Z}
\left(
\sum_{j=0}^{\infty}
\|A_{j+v}\|_{\op(\mathcal H_1)}
\|A_j\|_{\op(\mathcal H_1)}
\right)^q, \label{eq:3737}
\end{align}
where \eqref{eq:3737} follows by Theorem 18.11 (g) in \cite{conway2000course}. For each fixed \(v\in\mathbb Z\), with explanations given below, we get
\begin{align}
&
\left(
\sum_{j=0}^{\infty}
\|A_{j+v}\|_{\op(\mathcal H_1)}
\|A_j\|_{\op(\mathcal H_1)}
\right)^q \notag
\\
&=
\left(
\sum_{j=0}^{\infty}
\Big(
\|A_{j+v}\|_{\op(\mathcal H_1)}
\|A_j\|_{\op(\mathcal H_1)}
\Big)^{\frac{2}{q+1}}
\Big(
\|A_{j+v}\|_{\op(\mathcal H_1)}
\|A_j\|_{\op(\mathcal H_1)}
\Big)^{1-\frac{2}{q+1}}
\right)^q \notag
\\
&\le
\left(
\sum_{j=0}^{\infty}
\|A_{j+v}\|_{\op(\mathcal H_1)}^\frac{2q}{q+1}
\|A_j\|_{\op(\mathcal H_1)}^\frac{2q}{q+1}
\right)
\left(
\sum_{j=0}^{\infty}
\|A_{j+v}\|_{\op(\mathcal H_1)}^{\frac{2q}{2(q+1)}}
\|A_j\|_{\op(\mathcal H_1)}^{\frac{2q}{2(q+1)}}
\right)^{q-1} \notag\\
&\le \left(
\sum_{j=0}^{\infty}
\|A_{j+v}\|_{\op(\mathcal H_1)}^\frac{2q}{q+1}
\|A_j\|_{\op(\mathcal H_1)}^\frac{2q}{q+1}
\right) 
\left(
\left(
\sum_{j=0}^{\infty}
\|A_{j+v}\|_{\op(\mathcal H_1)}^{\frac{2q}{q+1}}
\right)^{1/2}
\left(
\sum_{j=0}^{\infty}
\|A_j\|_{\op(\mathcal H_1)}^{\frac{2q}{q+1}}
\right)^{1/2}
\right)^{q-1} \notag\\
&\le \left(
\sum_{j=0}^{\infty}
\|A_{j+v}\|_{\op(\mathcal H_1)}^\frac{2q}{q+1}
\|A_j\|_{\op(\mathcal H_1)}^\frac{2q}{q+1}
\right) 
\left( 
\sum_{j=0}^{\infty}
\|A_j\|_{\op(\mathcal H_1)}^{\frac{2q}{q+1}}
\right)^{q-1}, \label{eq:3838}
\end{align}
where Hölder's and the Cauchy--Schwarz inequalities were applied.

Therefore, combining \eqref{eq:3737} with \eqref{eq:3838}
\begin{align}
\sum_{v\in\mathbb Z}
\|\Gamma(v)\|_{\mathcal S_1(\mathcal H_1)}^q
&\le 
\|C_\varepsilon\|_{\mathcal S_1(\mathcal H_1)}^q  \sum_{v\in\mathbb Z}
\left(
\sum_{j=0}^{\infty}
\|A_{j+v}\|_{\op(\mathcal H_1)}
\|A_j\|_{\op(\mathcal H_1)}
\right)^q \notag
\\
&\le 
\|C_\varepsilon\|_{\mathcal S_1(\mathcal H_1)}^q 
\left(
\sum_{j=0}^{\infty}
\|A_j\|_{\op(\mathcal H_1)}^{\frac{2q}{q+1}}
\right)^{q-1}
\sum_{v\in\mathbb Z}
\sum_{j=0}^{\infty}
\|A_{j+v}\|_{\op(\mathcal H_1)}^{\frac{2q}{q+1}}
\|A_j\|_{\op(\mathcal H_1)}^{\frac{2q}{q+1}} \notag
\\
&= 
\|C_\varepsilon\|_{\mathcal S_1(\mathcal H_1)}^q 
\left(
\sum_{j=0}^{\infty}
\|A_j\|_{\op(\mathcal H_1)}^{\frac{2q}{q+1}}
\right)^{q-1}
\sum_{j=0}^{\infty}
\|A_j\|_{\op(\mathcal H_1)}^{\frac{2q}{q+1}}
\sum_{v\in\mathbb Z}
\|A_{j+v}\|_{\op(\mathcal H_1)}^{\frac{2q}{q+1}} \notag
\\
&= 
\|C_\varepsilon\|_{\mathcal S_1(\mathcal H_1)}^q 
\left(
\sum_{j=0}^{\infty}
\|A_j\|_{\op(\mathcal H_1)}^{\frac{2q}{q+1}}
\right)^{q+1}
\end{align}
and so condition \eqref{eq:abs-summ-Aj} implies condition \eqref{thm:ass-2.1-eq-Gamma}.
\end{example}

Arguably one of the most popular classes of models in functional time series analysis is that of autoregressive Hilbertian processes of order \(p\) (ARH\((p)\)). We  present this class and give sufficient conditions ensuring that its unique stationary solution admits a linear representation as in Example~\ref{example2_cond}.

Let \(\{\varepsilon_n\}_{n\in\ZZ}\) be as in Example~\ref{example2_cond}. Consider the recursion
\begin{equation}\label{eq:arhp}
X_n = \sum_{k=1}^p \Phi_k[X_{n-k}] + \varepsilon_n,\qquad n\in\ZZ,
\end{equation}
where \(\Phi_k\in\mathcal L(\clh_1)\) are bounded linear operators. Associated with the coefficients \(\Phi_k\), \(k=1,\dots,p\), is the polynomial
\begin{equation}
Q(z) = z^p I - z^{p-1}\Phi_1 - \cdots - z\Phi_{p-1} - \Phi_p,
\qquad z\in\mathbb C.
\end{equation}
For every \(z\in\mathbb C\), \(Q(z)\) is a bounded linear operator on the complex extension \(\widetilde{\clh}_1\) of \(\clh_1\). If the following statement holds
\begin{equation}\label{eq:arhp-condition}
Q(z)\ \text{is not invertible implies that } |z|<1,
\end{equation}
then the recursion \eqref{eq:arhp} admits a unique centered stationary solution, and this solution has the linear representation \eqref{eq:linear-series}.

To see that \eqref{eq:abs-summ-Aj} holds, the proof of Theorem 5.2 of \cite{Bosq2000:Linear} gives 
\[
\mathcal A \doteq
\begin{pmatrix}
\Phi_1 & \Phi_2 & \cdots & \Phi_{p-1} & \Phi_p \\
\Id_{\clh_1} & 0 & \cdots & 0 & 0 \\
0 & \Id_{\clh_1} & \cdots & 0 & 0 \\
\vdots & \vdots & \ddots & \vdots & \vdots \\
0 & 0 & \cdots & \Id_{\clh_1} & 0
\end{pmatrix}
\in \mathcal L(\clh_1^p), \quad \text{then }\|\mathcal A^j\|_{\op(\clh_1)}\le C\eta^j,\qquad j\in\NN_0,
\]
for some $\eta \in (0,1)$.
Moreover, letting $P$ denote the projection operator $P(x_1,\dots,x_p)=x_1$ and $J$ the canonical embedding $Jx=(x,0,\dots,0)$, then we can write $A_j = P \mathcal A^j J$ for all $j \in \NN_0$ and hence
\[
\|A_j\|_{\op(\clh_1)}
\le
\|P\|_{\op(\clh_1)} \|\mathcal A^j\|_{\op(\clh_1)} \|J\|_{\op(\clh_1)}
\le
\|P\|_{\op(\clh_1)}  C \eta^j  \|J\|_{\op(\clh_1)}.
\]
and so \eqref{eq:abs-summ-Aj} follows for all $q \ge 1$.

A simple, easily verifiable sufficient condition for~\eqref{eq:arhp-condition} is
\begin{equation}
\sum_{k=1}^p \|\Phi_k\|_{\op(\clh_1)}<1,
\end{equation}
which implies stability and yields \eqref{eq:linear-series}--\eqref{eq:abs-summ-Aj}; see Corollary 5.3 in \cite{Bosq2000:Linear}.

\begin{example}[ARH\((p)\) model]\label{example1_cond}
Suppose that \(\{X_k\}\) is the unique centered Gaussian stationary solution of \eqref{eq:arhp}, and assume that condition~\eqref{eq:arhp-condition} holds. Then there exist coefficients \(\{A_j\}_{j\ge0}\subset\mathcal L(\clh_1)\) such that
\begin{equation}\label{eq:arhp-ma}
X_n=\sum_{j=0}^\infty A_j[\varepsilon_{n-j}],\qquad n\in\ZZ,
\qquad\text{and}\qquad
\sum_{j=0}^\infty \|A_j\|_{\op(\clh_1)}<\infty.
\end{equation}
Consequently, Example~\ref{example2_cond} applies and so Condition \eqref{thm:ass-2.1-eq-Gamma} is true for all $q \ge 1$.
\end{example}

\section{A general Wiener-Itô chaos decomposition} \label{sec:Wiener-Ito-chaos}
%\section{Wiener-Itô Chaotic Decomposition}

\subsection{Hermite expansion} \label{se:Hermite}

In Section \ref{subsec:prelim-infinite}, we briefly introduced the Hermite expansion for functions in $L^2(\RR,\phi(x) dx:\RR)$ and its generalization to $L^2(\clh_1,\gamma_Q:\clh_2)$.
Here, we formally state and prove the generalized Hermite expansion. 

Recall the orthogonality property for real Hermite polynomials from \eqref{eq:hermite-orthogonality} and the generalized Hermite polynomials from \eqref{eq:Hermite_product}. We prove here the technical Lemma \ref{thm:hermite-expansion-general} stated above.

\begin{proof}[Proof of Lemma \ref{thm:hermite-expansion-general}]
For shortness' sake we write 
\begin{equation*}
\{ \Gamma_{\bfr i}[\cdot]\}_{i \in \NN, \bfr \in \cll}, 
\hspace{0.2cm}
\text{ with }
\hspace{0.2cm}
\Gamma_{\bfr i}[x] \doteq H_\bfr[x] \otimes v_i, \quad x \in \clh_1.
\end{equation*}
We prove that the family $ \{ \Gamma_{\bfr i}  \}_{i \in \NN, \bfr \in \cll}$ is orthogonal and complete in $L^2(\clh_1,\gamma_Q:\RR) \otimes \clh_2$.

\textit{Orthogonality:}
Fix $i,j \in \NN$ and $\bfr,\bfs \in \cll$, and let $M \doteq \max\{M_\bfr,M_\bfs\} < \infty$ (see \eqref{eq:defn-Ml}). Denote $\delta_{\bfr \bfs} = \prod_{k=1}^d \delta_{r_k s_k}$ whenever $\bfr,\bfs \in \NN_0^d$. Then, with explanations given below,
\begin{align}
    &\langle \Gamma_{\bfr i} ,\Gamma_{\bfs j}  \rangle_{L^2(\clh_1,\gamma_Q) \otimes \clh_2} 
    \nonumber \\
    &=  \left\langle 
    \prod_{m=1}^M H_{r_m}[W_{e_m}] \otimes v_i, 
    \prod_{m=1}^M H_{s_m}[W_{e_m}] \otimes v_j \right\rangle_{L^2(\clh_1,\gamma_Q) \otimes \clh_2} 
    \nonumber \\
    &=  \left\langle 
    \prod_{m=1}^M H_{r_m}[W_{e_m}],  
    \prod_{m=1}^M H_{s_m}[W_{e_m}]   \right\rangle_{L^2(\clh_1,\gamma_Q)} 
    \langle v_i,v_j \rangle_{\clh_2} 
    \label{eq:orthogonal3} \\
    &=  \int_{\clh_1} \prod_{m=1}^M H_{r_m}(W_{e_m}(x)) H_{s_m}(W_{e_m}(x)) \gamma_Q(dx) \delta_{ij} 
    \nonumber \\
    &= \prod_{m=1}^M \int_{\clh_1}
     H_{r_m}(W_{e_m}(x)) 
    H_{s_m}(W_{e_m}(x)) \gamma_Q(dx) \delta_{ij} 
    \label{eq:orthogonal5} \\
    &= \prod_{m=1}^M 
    \langle Q^{-1/2} e_m, QQ^{-1/2} e_m \rangle^{r_m}_{\clh_1} r_m! \delta_{i j} \delta_{\bfr \bfs}
    = \prod_{m=1}^M r_m! \delta_{r_m s_m} \delta_{ij},
    \label{eq:orthogonal6}
\end{align}
where \eqref{eq:orthogonal3} follows from \eqref{eq:tensor_inner_product}, \eqref{eq:orthogonal5} follows since the random variables $W_{u}, W_{v}$ are independent for $u \neq v$, and \eqref{eq:orthogonal6} follows from \eqref{eq:hermite-orthogonality} and \eqref{eq:white-noise-orthog}.

\textit{Completeness:}
Let $\widetilde \psi \in L^2(\clh_1,\gamma_Q:\clh_2)$ and assume that
\begin{equation}\label{eq:completeness_condition}
\big\langle \widetilde \psi , \Gamma_{\bfl i} \big\rangle_{L^2(\clh_1,\gamma_Q)\otimes \clh_2}=0,
\qquad \text{for all } \bfl \in \cll \text{ and } i\in \NN.
\end{equation}
Fix an orthonormal basis $\{v_i\}_{i\in\NN}$ of $\clh_2$ and define the scalar-coordinate functions
\begin{equation}
\psi_i(x) \doteq \big\langle \widetilde \psi(x), v_i \big\rangle_{\clh_2},
\qquad x\in \clh_1.
\end{equation}
Then $\psi_i \in L^2(\clh_1,\gamma_Q:\RR)$ for every $i\in\NN$ and, using
$\Gamma_{\bfl i}[x]=H_{\bfl}[x]\otimes v_i$ and Fubini's theorem,
\begin{align*}
0
= \big\langle \widetilde \psi , \Gamma_{\bfl i} \big\rangle_{L^2(\clh_1,\gamma_Q)\otimes \clh_2}
&= \int_{\clh_1} \big\langle \widetilde \psi(x),  H_{\bfl}[x] v_i \big\rangle_{\clh_2}  \gamma_Q(dx) \\
&= \int_{\clh_1} \psi_i(x)  H_{\bfl}[x]  \gamma_Q(dx),
\qquad \text{for all } \bfl\in\cll,\ i\in\NN.
\end{align*}
Since $\{H_{\bfl}\}_{\bfl\in\cll}$ is complete in $L^2(\clh_1,\gamma_Q:\RR)$ (see, e.g.,
Theorem~1.1.1 in \cite{Nua06book} or Theorem~9.7 in \cite{DaPr06}),
it follows that $\psi_i=0$ $\gamma_Q$-a.s.\ for every $i\in\NN$.
Hence $\widetilde \psi(x)=0_{\clh_2}$ $\gamma_Q$-a.s., proving completeness.
\end{proof}

\begin{defn} \label{def:Hermiterank}
    Let $f \in L^2(\clh_1,\gamma_Q:\clh_2)$ and let $c_{i,\bfl}$, $i \in \NN, \bfl \in \cll$ be the coefficients given by \eqref{eq:coeff-hermite-general} in the representation \eqref{eq:Hermite_expansion}. Define the \textit{Hermite rank} of the operator $f$ to be 
    \begin{equation}
    \rk(f) = \min\left\{ q \in \NN  ~\bigg|~  \text{there is a } \bfl \in \cll, \text{with}   \sum_{k=1}^\infty l_k = q \text{ and }  c_{i, \bfl} \neq 0   \text{ for some } i \in \NN  \right\}.
    \end{equation}
\end{defn}

\subsection{Wiener chaos expansion}
The following lemma formalizes the chaos decomposition for the partial sums, by using the Hermite expansion developed in Section \ref{se:Hermite}. Henceforth, we   denote the usual multiple Wiener-Itô integral by $I_p$ and its $\clh_2$-valued counterpart by $\cli_p$.

\begin{lemma}\label{cor:Hilbert_expansion_Chaos_decomposition}
The partial sums $S_n$ in \eqref{eq:partial_sum_G} admit the Wiener-Itô chaos decomposition
    \begin{equation} \label{eq:Hilbert_expansion_Chaos_decomposition}
    S_n = \sum_{p=q}^\infty \cli_p (h_{n,p}) = 
    \sum_{p=q}^\infty 
    ( I_{p} \otimes \operatorname{Id}_{\clh_{2}} ) [h_{n,p}]
    \hspace{0.2cm}
    \text{ with }
    \hspace{0.2cm}
    h_{n,p} = \sum_{i=1}^\infty \left( h_{n,p,i} \otimes v_i \right),
    \end{equation}
    where $I_p$ is the $p$-th multiple Wiener–Itô integral, $q \doteq \rk(G)$, and
    \begin{equation} \label{eq:hpnitilde}
    h_{n,p,i}
    =
    \frac{1}{\sqrt{n}} \sum_{k=1}^n \sum_{\bm{j} \in \NN^p} b_{i,\bm{j}} (\veps_{j_1 k} \otimes \dots \otimes \veps_{j_p k}) \in \mfh^{\odot p},
    \end{equation}
where, for each $p \ge q$, $\{ b_{i,\bm{j}} \doteq  b_i(j_{1}, \dots, j_{p}) ~|~ j_{1}, \dots, j_{p} \geq 1\}$ with $\sum_{\bm{j} \in \NN^p} (b_{i,\bm{j}})^2 <\infty $ and, for all $i \in \NN$, $b_{i,\bm{j}}$ is a symmetric array of real numbers in $\bm{j}$.
\end{lemma}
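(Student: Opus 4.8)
The plan is to combine the two decompositions already established in Section \ref{sec:Wiener-Ito-chaos}: the generalized Hermite expansion of $G$ from Lemma \ref{thm:hermite-expansion-general}, and the standard fact that Hermite polynomials of a Gaussian family can be rewritten as multiple Wiener-It\^o integrals once one realizes the scores of $\{X_k\}$ through an isonormal Gaussian process on $\mfh$ (constructed in Lemma \ref{pro:isonormal}). First I would write $G$ in the form \eqref{eq:Hermite_expansion}, so that $G[x] = \sum_{i} \sum_{\bfr \in \cll} c_{i,\bfr} H_\bfr(x) \otimes v_i$, where $H_\bfr(x) = \prod_m H_{r_m}(W_{u_m}(x))$. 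Grouping the terms by total order $p = \sum_k r_k$, the Hermite-rank definition (Definition \ref{def:Hermiterank}) tells us the sum starts at $p = q$, giving $G[x] = \sum_{p=q}^\infty \sum_i \big( \sum_{\bfr: |\bfr| = p} c_{i,\bfr} H_\bfr(x) \big) \otimes v_i$.

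Next I would pass to the isonormal picture. By Lemma \ref{pro:isonormal} there is an isonormal Gaussian process $\{W(h): h \in \mfh\}$ whose covariance matches the autocorrelation of $\{X_k\}$; in particular the score $\langle X_k, Q^{-1/2} u_m\rangle_{\clh_1} = W_{u_m}(X_k)$ corresponds to $W(\veps_{mk})$ for suitable elements $\veps_{mk} \in \mfh$ (the noise-indexed generators playing the role of the innovations, consistent with the notation $\veps_{jk}$ used in \eqref{eq:hpnitilde}). The classical multiplication formula for products of Hermite polynomials of jointly Gaussian variables — precisely, $\prod_m H_{r_m}(W(g_m)) = I_p\big( g_1^{\otimes r_1} \widetilde\otimes \cdots \widetilde\otimes g_{M}^{\otimes r_M} \big)$ when the $g_m$ are orthonormal, and more generally a sum of such symmetrized tensor products — converts each $H_\bfr(X_k)$ into $I_p$ of an explicit symmetric kernel built from tensor products of the $\veps_{jk}$'s. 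Collecting the coefficients $c_{i,\bfr}$ and the combinatorial constants from the multiplication formula into a single array $b_{i,\bm j} = b_i(j_1,\dots,j_p)$ (symmetric in $j_1,\dots,j_p$ by construction, since we symmetrize the tensor), one obtains for each summand the kernel $\sum_{\bm j \in \NN^p} b_{i,\bm j}\, \veps_{j_1 k} \otimes \cdots \otimes \veps_{j_p k} \in \mfh^{\odot p}$. Summing over $k$ with the normalization $1/\sqrt n$ and tensoring with $v_i$, then over $i$, gives exactly \eqref{eq:hpnitilde} and \eqref{eq:Hilbert_expansion_Chaos_decomposition}, with the operator $I_p \otimes \operatorname{Id}_{\clh_2}$ accounting for the $\clh_2$-valued nature of $G$.

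The two points needing genuine care are: (i) verifying the square-summability $\sum_{\bm j \in \NN^p} (b_{i,\bm j})^2 < \infty$ for each $p,i$, which follows because $\|H_\bfr\|_{L^2(\gamma_Q)}^2 = \prod_m r_m!$ and $\sum_{i,\bfr} (c_{i,\bfr})^2 \prod_m r_m! = \|G\|_{L^2}^2 < \infty$ by Lemma \ref{thm:hermite-expansion-general}, so the $L^2$-norm of each chaos component is controlled and the isometry $\|I_p(f)\|^2 = p!\|f\|_{\mfh^{\otimes p}}^2$ transfers this bound to the kernels; and (ii) justifying the interchange of the (a priori infinite) sums over $\bfr$, $i$, and $p$ with the Wiener-It\^o integral, i.e., that the chaos decomposition of the $\clh_2$-valued random element $S_n$ genuinely converges in $L^2(\Omega:\clh_2)$ — this is again a consequence of orthogonality of distinct chaoses and the $L^2$-bound just described, so Fubini/dominated convergence applies. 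I expect step (ii), the bookkeeping that reconciles the Hermite-index $\bfr$ (a sparse $\NN_0$-sequence) with the innovation-multi-index $\bm j \in \NN^p$ and shows the resulting kernel lands in the symmetric tensor power $\mfh^{\odot p}$ with finite norm, to be the main obstacle — it is conceptually routine but notationally the most delicate part, and is where the precise form of $b_{i,\bm j}$ gets pinned down.
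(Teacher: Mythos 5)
Your proposal is correct and follows essentially the same route as the paper: expand $G$ via the generalized Hermite expansion of Lemma \ref{thm:hermite-expansion-general}, transfer the scores to the isonormal process of Lemma \ref{pro:isonormal} (the paper invokes Proposition 7.2.3 of \cite{nourdin2012normal} for the identification $W_{u_j}(X_k)=X(\veps_{jk})$), group by total Hermite order starting at $q=\rk(G)$, and convert products of Hermite polynomials into multiple Wiener--It\^o integrals (the paper cites Chapter 8 of \cite{PecTaqbook11} for this step, which is exactly your multiplication-formula argument). Your additional remarks on square-summability of the $b_{i,\bm j}$ and $L^2$-convergence of the chaos series are details the paper leaves to the cited references, but they are consistent with its argument.
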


\begin{proof}
Using the Hermite expansion from Lemma \ref{thm:hermite-expansion-general}, we can rewrite, with further explanations given below, the series of partial sums as
\begin{align}
    S_n 
= 
    \frac{1}{\sqrt{n}} \sum_{k=1}^n G[X_k] 
&=
    \frac{1}{\sqrt{n}} \sum_{k=1}^n \sum_{i=1}^\infty 
    \sum_{\bl \in \cll} c_{i,\bl} \prod_{j=1}^{M_{\bfl}} H_{l_j}( \langle Q^{-\frac{1}{2}} e_{j} , X_k \rangle_{\clh_1} ) \otimes v_i  
    \nonumber
%\label{eq:Sn-1-0}
\\
&=
    \frac{1}{\sqrt{n}} \sum_{k=1}^n \sum_{i=1}^\infty  \sum_{\bl \in \cll} c_{i,\bl} \prod_{j=1}^{M_{\bfl}} H_{l_j}( X(\varepsilon_{jk}) ) \otimes v_i  
\label{eq:Sn-1-1}\\
&=
    \frac{1}{\sqrt{n}} \sum_{k=1}^n \sum_{i=1}^\infty \sum_{p = q}^\infty  \sum_{\bl \in \cll_p} c_{i,\bl} \prod_{j=1}^{M_{\bfl}} H_{l_j}( X(\varepsilon_{jk}) ) \otimes v_i  
\label{eq:Sn-1-2} \\
&=
    \sum_{i=1}^\infty \sum_{p=q}^\infty I_p(h_{n,p,i}) \otimes v_i ,
\label{eq:Sn-1-3}
\end{align}
where \eqref{eq:Sn-1-1} follows from Proposition 7.2.3 of \cite{nourdin2012normal} by setting $W_{e_j}(X_k) = X(\varepsilon_{jk})$, where $\{\veps_{jk}\}_{j \in \NN, k \in \ZZ}$ is an underlying isonormal process. Line \eqref{eq:Sn-1-2} follows by defining $\cll_m \doteq \{\boldsymbol{l} \in \NN_0^\infty: \sum_{k=1}^\infty l_k = m \}$ and recalling that $\rk(G) = q$. Line \eqref{eq:Sn-1-3} follows from the discussion in Chapter 8 of \cite{PecTaqbook11} and the linearity of $I_p$. Finally, \eqref{eq:Sn-1-3} gives
\begin{equation}
    S_n = \sum_{p=q}^\infty (I_p \otimes \operatorname{Id}_{\clh_2}) \left[ \sum_{i=1}^\infty \left( h_{n,p,i} \otimes v_i\right)   \right]
    = \sum_{p=q}^\infty 
    ( I_{p} \otimes \operatorname{Id}_{\clh_{2}} ) [h_{n,p}], 
\label{eq:Sn-1-4}
\end{equation}
by recalling the identities in \eqref{eq:Hilbert_expansion_Chaos_decomposition}--\eqref{eq:hpnitilde} and the operation 
$(\mathcal{A} \otimes \mathcal{B})[a \otimes b] = (\mathcal{A}[a] \otimes \mathcal{B}[b])$. The relation \eqref{eq:Sn-1-4} concludes the proof.
\end{proof}

\begin{remark} 
The form of the coefficients $b_{i,\bm{j}}$ from Lemma \ref{cor:Hilbert_expansion_Chaos_decomposition} can be given explicitly in terms of the Hermite coefficients as follows. 
    For $\bm{j}=(j_1,\dots,j_p)\in\NN^p$, define its associated
multiplicity sequence for $m \in \NN$ $\boldsymbol{\ell}(\bm{j})\in\cll_p$ and the coefficients $b_{i,\bm{j}}$ by
\[
    \ell_m(\bm{j})
    \doteq 
    \#\{r\in\{1,\dots,p\}: j_r=m\}, \quad 
       b_{i,\bm{j}}
    =
    b_i(j_1,\dots,j_p)
    \doteq
    \frac{\prod_{m=1}^{\infty} \ell_m(\bm{j})!}{p!}
    c_{i,\boldsymbol{\ell}(\bm{j})}.
\]
In particular, $b_{i,\bm{j}}$ is symmetric in $\bm{j}$.
\end{remark}

In the sequel, for $p\ge q$, we denote by $G^p$ the $p$-th chaotic component of $G$ whenever $G \in  L^2(\clh_1,\gamma_Q :\clh_2)$, namely
\begin{equation} \label{eq:def-Gp}
        G^p[x]
    \doteq
    \sum_{i=1}^{\infty}
    \sum_{\bl\in\cll_p}
    c_{i,\bl} H_{\bl}[x]\, v_i,
    \qquad x\in\clh_1.
\end{equation}

\section{Proofs of main results} \label{se:proofs}

\subsection{Proof of Theorem \ref{thm:main}}

The proof of the Breuer-Major theorem of \cite{nourdin2011quantitative} for $\clh_1 = \RR^d$ crucially exploits the finite dimensionality of the underlying Gaussian process $\{X_k\}_{k \in \ZZ}$ and is no longer available. 

For the proof of Theorem \ref{thm:main}, we use the results of \cite{duker2025fourth}. For completeness we recall Theorem 3.4 in \cite{duker2025fourth} here: Recall the chaos decomposition \eqref{eq:Hilbert_expansion_Chaos_decomposition}--\eqref{eq:hpnitilde} of $S_n$ with respective covariance operators $\clt_{S_n}$. Let $\{Z_r: r \ge 1\}$ be a family of centered $\clh_2$-valued normal random variables with respective covariance operators $\clt_{Z_r}$ and suppose:
\begin{enumerate}[label=(\roman*), ref=(\roman*), align=left]
    \item For $r \ge 1$, $\| \clt_{\mathcal{I}_r(h_{n,r})} - \clt_{Z_r}\|_{\mathcal{S}_1(\clh_2)} \to 0$. \label{item1:thm-infinite-expansions-covar-oper}
    \item Letting $\clt_Z \doteq  \sum_{r=1}^\infty \clt_{Z_r}$, we have that $\clt_Z$ is nondegenerate and $\| \clt_{Z} \|_{\mathcal{S}_1(\clh_2)} < \infty$.
    \label{item1.1:thm-infinite-expansions-trace-sec-mom}
    \item For $i\in \NN$ and $r \ge 2$,
    \begin{equation}
    \lim_{n \to \infty} \|  h_{n,r,i}  \otimes_m  h_{n,r,i}  \|_{\mfh^{\otimes 2(r - m)}} = 0,
    \end{equation}
    for all $m=1,\dots,r-1$.  \label{item:cond2}
    \item It holds that \label{item:cond1}
    \begin{equation}
        \lim_{N \to \infty} \sup_{n \ge 1}  \sum_{r=N+1}^\infty \sum_{i=1}^\infty r! \|h_{n,r,i}\|_{\mfh^{\otimes r}}^2 = 0.
    \end{equation}
\end{enumerate}
Then, $S_n \overset{d}{\to} Z$, where $Z$ is the centered Gaussian element of $\clh_2$ with covariance operator $\covop_Z$.

We introduce the following identities based on the chaos decomposition
\eqref{eq:Hilbert_expansion_Chaos_decomposition}--\eqref{eq:hpnitilde}:
\begin{equation} \label{eq:finite-secon-moment-b}
    p! \sum_{i=1}^\infty \sum_{\bfl \in \NN^p} b_{i,\bfl}^2
    =
    \E \|G^p[X_1]\|_{\clh_2}^2 < \infty,
    \qquad
    \sum_{p=q}^\infty p! \sum_{i=1}^\infty \sum_{\bfl \in \NN^p} b_{i,\bfl}^2
    =
    \E \|G[X_1]\|_{\clh_2}^2 < \infty,
\end{equation}
where $G^p$, $q \le p < \infty$, denotes the term of order $p$ appearing in the chaotic expansion
\eqref{eq:Hilbert_expansion_Chaos_decomposition}--\eqref{eq:hpnitilde} of
$S_1 = G[X_1]$, defined in \eqref{eq:def-Gp}.

We show that the derived chaos decomposition
\eqref{eq:Hilbert_expansion_Chaos_decomposition} for $S_n$ satisfies
conditions \ref{item1:thm-infinite-expansions-covar-oper}--\ref{item:cond1}.

\noindent
\textit{Condition \ref{item1:thm-infinite-expansions-covar-oper}:}
We first derive the covariance operator $\clt_{\mathcal{I}_p(h_{n,p})}$ of an individual chaos in terms of \eqref{eq:hpnitilde} as
\begin{align}
    \clt_{\cli_p(h_{n,p})}
    &=
    \E\left[
        \cli_p(h_{n,p}) \otimes \cli_p(h_{n,p})
    \right]
    \nonumber \\
    &=
    \sum_{i,j=1}^{\infty}
    \E\left[
        I_p(h_{n,p,i}) I_p(h_{n,p,j})
    \right]
    (v_i\otimes v_j)
    \nonumber \\
    &=
    p!
    \sum_{i,j=1}^{\infty}
    \left\langle
        h_{n,p,i}, h_{n,p,j}
    \right\rangle_{\mfh^{\otimes p}}
    (v_i\otimes v_j)
    \nonumber \\
    &=
    \frac{p!}{n}
    \sum_{i,j=1}^{\infty}
    \sum_{k,l=1}^{n}
    \sum_{\bfr,\bfs\in\NN^p}
    b_{i,\bfr} b_{j,\bfs}
    \prod_{a=1}^{p}
    \left\langle
        \veps_{r_a k}, \veps_{s_a l}
    \right\rangle_{\mfh}
    (v_i\otimes v_j)
    \nonumber \\
    &=
    \sum_{i,j=1}^{\infty}
    p!
    \sum_{|v|<n}
    \left(1-\frac{|v|}{n}\right)
    \sum_{\bfr,\bfs\in\NN^p}
    b_{i,\bfr} b_{j,\bfs}
    \prod_{a=1}^{p}
    \rho_{r_a s_a}(v)
    (v_i\otimes v_j).
\end{align}
The candidate limiting covariance operator for a single chaos is then given by the presumed limit in $n$, i.e., 
\begin{equation}
     \clt_{Z_p}
    \doteq
    \sum_{i,j =1}^\infty
    p! \sum_{v \in \ZZ}
    \sum_{\bfr,\bfs \in \NN^p}
    b_{i,\bfr} b_{j,\bfs}
    \prod_{k=1}^p \rho_{r_k s_k}(v)\,
    (v_i \otimes v_j).
\end{equation}
We wish to control their distance with respect to the trace-class norm
\begin{align}
    &\| \clt_{\mathcal{I}_p(h_{n,p})} - \clt_{Z_p}\|_{\mathcal{S}_1(\clh_2)}
    \nonumber
    \\
    &=
    \Bigg\|
    \sum_{i,j =1}^\infty
    p! \sum_{|v| < n} \left( 1 - \frac{|v|}{n} \right)
    \sum_{\bfr,\bfs \in \NN^p}
    b_{i,\bfr} b_{j,\bfs}
    \prod_{k=1}^p \rho_{r_k s_k}(v)\,
    (v_i \otimes v_j)
    \nonumber
    \\
    &\hspace{1cm}
    -
    \sum_{i,j =1}^\infty
    p! \sum_{v \in \ZZ}
    \sum_{\bfr,\bfs \in \NN^p}
    b_{i,\bfr} b_{j,\bfs}
    \prod_{k=1}^p \rho_{r_k s_k}(v)\,
    (v_i \otimes v_j)
    \Bigg\|_{\mathcal{S}_1(\clh_2)}
    \nonumber
    \\
    &=
    \Bigg\|
    \sum_{i,j =1}^\infty
    p! \sum_{|v| \geq n}
    \sum_{\bfr,\bfs \in \NN^p}
    b_{i,\bfr} b_{j,\bfs}
    \prod_{k=1}^p \rho_{r_k s_k}(v)\,
    (v_i \otimes v_j)
    \nonumber
    \\
    &\hspace{1cm}
    -
    \sum_{i,j =1}^\infty
    p! \sum_{|v| < n} \frac{|v|}{n}
    \sum_{\bfr,\bfs \in \NN^p}
    b_{i,\bfr} b_{j,\bfs}
    \prod_{k=1}^p \rho_{r_k s_k}(v)\,
    (v_i \otimes v_j)
    \Bigg\|_{\mathcal{S}_1(\clh_2)}.
    \label{eq:twosummands-condition-1}
\end{align}
We control the two summands in \eqref{eq:twosummands-condition-1} separately.

We first show that
\begin{align} \label{eq:traceoflimp}
    \| \clt_{Z_p} \|_{\mathcal{S}_1(\clh_2)}
    < \infty.
\end{align}
Note that $\rho_{rs}(v) = \rho_{sr}(-v)$ by \eqref{eq:cor_scores} and recall the correlation operator $R(v): \clh_1 \to \clh_1$ from \eqref{eq:R(v)-op}.
Henceforth, we write $\mathds{1}_{\{\mathfrak{C}=1\}}$ to denote the case in which Assumption \ref{thm:ass-2.2} holds and respectively $\mathds{1}_{\{\mathfrak{C}=2\}}$ for the case when Assumption \ref{thm:ass-2.1} holds. We introduce
\begin{align}
        \theta_\mfc &\doteq \begin{cases}
        \sum_{v \in \ZZ}
    \| R(v) \|_{\op(\clh_1)}^{q} & \text{if }\mfc =  1, \\
     \sum_{v \in \ZZ}
    \| \Gamma(v) \|_{\mathcal{S}_1(\clh_1)}^{q} & \text{if }\mfc =  2, \label{eq:de:theta_theta-1}
    \end{cases}  \\
    K_\mfc &\doteq \begin{cases}
        \inf_{k \in \NN}\{ \| R(v) \|_{\op(\clh_1)} \leq 1 \text{ for all } |v| \geq k \}  & \text{if }\mfc = 1, \\
        \inf_{k \in \NN}\{ \| \Gamma(v) \|_{\mathcal{S}_1(\clh_1)} \leq 1 \text{ for all } |v| \geq k \} & \text{if }\mfc =  2.
            \end{cases}  \label{eq:de:theta_theta-2}
\end{align}
It holds that $\theta_\mfc, K_\mfc < \infty$.
In the subsequent derivations, the subscript one is associated with Assumption \ref{thm:ass-2.2} and the subscript two with Assumption \ref{thm:ass-2.1}.

For each $i\ge1$, define further
\begin{equation} \label{eq:def-Cp-operator}
   \mathfrak{b}_{p,i}
\doteq
\sum_{\bfr \in \NN^p} b_{i,\bfr}
(e_{r_1}\otimes \cdots \otimes e_{r_p})
\in \clh_1^{\otimes p}, \quad \text{and }  C_p:\clh_2 \to \clh_1^{\otimes p},
    \; \text{so that }
    C_p v_i \doteq \mathfrak{b}_{p,i},
\end{equation}
where $\{v_i\}_{i\ge1}$ denotes the orthonormal basis of $\clh_2$ appearing in
\eqref{eq:hpnitilde}. Then $C_p$ is in $\mathcal{S}_2(\clh_2,\clh_1^{\otimes p})$, since
\begin{equation}
p! \|C_p \|_{\mathcal S_2(\clh_2,\clh_1^{\otimes p})}^2
=
p! \sum_{i=1}^\infty \|\mathfrak{b}_{p,i}\|_{\clh_1^{\otimes p}}^2
=
p! \sum_{i=1}^\infty \sum_{\bfr \in \NN^p} b_{i,\bfr}^2
=
\E \| G^p[X_1] \|^2_{\clh_2}
\leq
\E \| G[X_1] \|^2_{\clh_2}
< \infty.
\end{equation}

Moreover, for $\bfr=(r_1,\dots,r_p)$ and $\bfs=(s_1,\dots,s_p)$, we have
\[
\big\langle
R(v)^{\otimes p}
\big(
e_{r_1}\otimes\cdots\otimes e_{r_p}
\big),
e_{s_1}\otimes\cdots\otimes e_{s_p}
\big\rangle_{\clh_1^{\otimes p}}
=
\prod_{k=1}^p \rho_{r_k s_k}(v).
\]
Hence, we can recast $\clt_{Z_p}$ as follows
\begin{equation} \label{eq:rewrite-as-operators}
\begin{aligned}
   \clt_{Z_p} &= \sum_{v \in \ZZ} \sum_{i,j =1}^\infty
    p! \sum_{\bfr,\bfs \in \NN^p}
    b_{i,\bfr} b_{j,\bfs}
    \prod_{k=1}^p \rho_{r_k s_k}(v)\,
    (v_i \otimes v_j) \\
    &= \sum_{v \in \ZZ}
    \sum_{i,j =1}^\infty
    p!\,
    \langle R(v)^{\otimes p}\mathfrak{b}_{p,i}, \mathfrak{b}_{p,j}\rangle_{\clh_1^{\otimes p}}\,
    (v_i \otimes v_j) \\
    &=  p!\, \sum_{v \in \ZZ}
    C_p^* R(v)^{\otimes p} C_p.
\end{aligned}
\end{equation}

Turning back to \eqref{eq:traceoflimp}, i.e., to showing that $\clt_{Z_p}$ is trace-class, we apply the triangle inequality together with \eqref{eq:rewrite-as-operators} to obtain
\begin{align}
    \| \clt_{Z_p} \|_{\mathcal{S}_1(\clh_2)}
    &\leq
    \sum_{v \in \ZZ}
    \left\|
    \sum_{i,j =1}^\infty
    p! \sum_{\bfr,\bfs \in \NN^p}
    b_{i,\bfr} b_{j,\bfs}
    \prod_{k=1}^p \rho_{r_k s_k}(v)\,
    (v_i \otimes v_j)
    \right\|_{\mathcal{S}_1(\clh_2)}
    \nonumber
    \\
    &=
    p!
    \sum_{v \in \ZZ}
    \left\|
    C_p^* R(v)^{\otimes p} C_p
    \right\|_{\mathcal{S}_1(\clh_2)}
    \label{al:operator-notation-separate-sum-1}
    \\
    &\leq
    p!
    \sum_{|v| \leq K_{\mathfrak{C}}-1}
    \left\|
    C_p^* R(v)^{\otimes p} C_p
    \right\|_{\mathcal{S}_1(\clh_2)}
    +
    p!
    \sum_{|v| \geq K_{\mathfrak{C}}}
    \left\|
    C_p^* R(v)^{\otimes p} C_p
    \right\|_{\mathcal{S}_1(\clh_2)},
    \label{al:operator-notation-separate-sum}
\end{align}
for $\mathfrak{C}=1,2$, where we use the notation introduced in
\eqref{eq:de:theta_theta-1} and \eqref{eq:de:theta_theta-2}.

We consider the two summands in \eqref{al:operator-notation-separate-sum} separately, and introduce the block operator
\begin{equation} \label{eq:two-operators}
    \mathcal{R}^{(p)}(v):
\clh_1^{\otimes p}\oplus \clh_1^{\otimes p}
\to
\clh_1^{\otimes p}\oplus \clh_1^{\otimes p}, \quad \mathcal{R}^{(p)}(v)
    \doteq 
    \begin{pmatrix}
        R(0)^{\otimes p} & R(v)^{\otimes p} \\
        R(-v)^{\otimes p} & R(0)^{\otimes p}
    \end{pmatrix}.
\end{equation}
Note that $\mathcal{R}^{(p)}(v)$ is self-adjoint and nonnegative; see Lemma \ref{lem:Positivity of the block operators}.

For the first summand in \eqref{al:operator-notation-separate-sum}, the derivations under Assumptions \ref{thm:ass-2.2} and \ref{thm:ass-2.1} align. We get, for $\mfc=1,2$,
\begin{align}
    &p!
    \sum_{|v| \leq K_\mfc-1}
    \left\|
    C_p^* R(v)^{\otimes p} C_p
    \right\|_{\mathcal{S}_1(\clh_2)}
    \label{eq:R1argument1}
    \\
    &=
    p!
    \sum_{|v| \leq K_\mfc-1}
    \left\|
    \begin{pmatrix}
    I_{\clh_2} & 0 \\
    0 & 0
    \end{pmatrix}
    (C_p^* \oplus C_p^*)
    \mathcal R^{(p)}(v)
    (C_p \oplus C_p)
    \begin{pmatrix}
    0 & 0 \\
    I_{\clh_2} & 0
    \end{pmatrix}
    \right\|_{\mathcal{S}_1(\clh_2 \oplus \clh_2)}
    \label{eq:R1argument2}
    \\
    &\leq
    p!
    \sum_{|v| \leq K_\mfc-1}
    \left\|
    (C_p^* \oplus C_p^*)
    \mathcal R^{(p)}(v)
    (C_p \oplus C_p)
    \right\|_{\mathcal{S}_1(\clh_2 \oplus \clh_2)}
    \label{eq:R1argument3}
    \\
    &=
    p!
    \sum_{|v| \leq K_\mfc-1}
    \tr_{\clh_2 \otimes \clh_2}\!\left(
    (C_p^* \oplus C_p^*)
    \mathcal R^{(p)}(v)
    (C_p \oplus C_p)
    \right)
    \label{eq:R1argument4}
    \\
    &\leq
    p!\, 4K_\mfc\,
    \tr_{\clh_2}\!\big(C_p^* R(0)^{\otimes p} C_p\big)
    \label{eq:R1argument5}
    \\
    &=
    p!\, 4K_\mfc \sum_{i=1}^\infty
    \big\langle \mathfrak{b}_{p,i}, R(0)^{\otimes p} \mathfrak{b}_{p,i} \big\rangle_{\clh_1^{\otimes p}}
    \label{eq:R1argument6}
    \\
    &\leq
    4K_\mfc \, \E \| G^p[X_1] \|^2_{\clh_2}.
    \label{eq:R1argument7}
\end{align}
Here \eqref{eq:R1argument3} follows because multiplication by the two norm-one block projections does not increase the trace-class norm. Since $\mathcal R^{(p)}(v)$ is nonnegative, the operator $(C_p^* \oplus C_p^*) \mathcal R^{(p)}(v) (C_p \oplus C_p)$ is self-adjoint and nonnegative, resulting in \eqref{eq:R1argument4}. Moreover, the trace of a nonnegative block operator is the sum of the traces of its diagonal blocks, and the number of integers $v$ satisfying $|v|\le K_{\mathfrak{C}}-1$ is at most $2K_{\mathfrak{C}}$, which gives \eqref{eq:R1argument5}. Finally, \eqref{eq:R1argument7} can be seen by applying Cauchy-Schwarz as follows
\begin{equation}
\begin{split}
    \left| \big\langle \mathfrak{b}_{p,i}, R(0)^{\otimes p} \mathfrak{b}_{p,i} \big\rangle_{\clh_1^{\otimes p}} \right|  &= \left|
    \sum_{\bfr,\bfs \in \NN^p}
    b_{i,\bfr} b_{i,\bfs}
    \prod_{k=1}^p \rho_{r_k s_k}(0)
    \right|
    \\
    &=
    \left|
    \E \left\langle
    \sum_{\bfr \in \NN^p}
    b_{i,\bfr}
    (\varepsilon_{r_1 0} \otimes \dots \otimes \varepsilon_{r_p 0}),
    \sum_{\bfs \in \NN^p}
    b_{i,\bfs}
    (\varepsilon_{s_1 0} \otimes \dots \otimes \varepsilon_{s_p 0})
    \right\rangle_{\mfh^{\otimes p}}
    \right|
    \\
    &\le
    \E\left\|
    \sum_{\bfr \in \NN^p}
    b_{i,\bfr}
    (\varepsilon_{r_1 0} \otimes \dots \otimes \varepsilon_{r_p 0})
    \right\|_{\mfh^{\otimes p}}^2
    =
    \sum_{\bfr \in \NN^p} b_{i,\bfr}^2
    =
    \|\mathfrak{b}_{p,i}\|_{\clh_1^{\otimes p}}^2.
\end{split}
\end{equation}

For the second summand in \eqref{al:operator-notation-separate-sum}, we distinguish two cases corresponding to Assumptions \ref{thm:ass-2.2} and \ref{thm:ass-2.1}.
\textit{Case 1:} We write
\begin{align}
    p! \sum_{|v| \geq K_1}
    \left\|
    C_p^* R(v)^{\otimes p} C_p
    \right\|_{\mathcal{S}_1(\clh_2)}
    &\leq
    p! \sum_{|v| \geq K_1}
    \| R(v)^{\otimes p} C_p \|_{\mathcal{S}_2(\clh_2,\clh_1^{\otimes p})}
    \| C_p \|_{\mathcal{S}_2(\clh_2,\clh_1^{\otimes p})}
    \label{eq:R2argument4}
    \\
    &\leq
    p! \sum_{|v| \geq K_1}
    \| R(v)^{\otimes p} \|_{\op(\clh_1^{\otimes p})}
    \| C_p \|_{\mathcal{S}_2(\clh_2,\clh_1^{\otimes p})}^2
    \label{eq:R2argument4-1}
    \\
    &\leq
    p! \sum_{i=1}^\infty \| \mathfrak{b}_{p,i} \|_{\clh_1^{\otimes p}}^2
    \sum_{|v| \ge K_1}
    \| R(v) \|_{\op(\clh_1)}^{p}
    \label{eq:R2argument5}
    \\
    &=
    \E \| G^p[X_1] \|^2_{\clh_2}
    \sum_{|v| \ge K_1}
    \| R(v) \|_{\op(\clh_1)}^{p}
    \label{eq:R2argument6}
    \\
    &\le
    \E \| G^p[X_1] \|^2_{\clh_2}
    \sum_{v \in \ZZ} \| R(v) \|_{\op(\clh_1)}^{q},
    \label{eq:R2argument7}
\end{align}
where \eqref{eq:R2argument4} is due to Problem 28(c) (Section 6) in \cite{reed1972methods}. The inequality \eqref{eq:R2argument4-1} can be inferred from Proposition 18.6 (d) in \cite{conway2000course} and \eqref{eq:R2argument5} is due to the definition of the operator $C_p$ in \eqref{eq:def-Cp-operator} as well as 
$\|R(v)^{\otimes p}\|_{\op(\clh_1^{\otimes p})}
\le
\|R(v)\|_{\op(\clh_1)}^{p}$. In \eqref{eq:R2argument6}, we use the relation \eqref{eq:finite-secon-moment-b}. Finally \eqref{eq:R2argument7} follows from the definition of $K_1$ given in \eqref{eq:de:theta_theta-1} and $q \le p$.

\textit{Case 2:}
For the second scenario, recalling $Q^{-1/2}_p$ from \eqref{eq:def-Qp} and Remark \ref{re:remark-on-operator-identity-R-Gamma}, we get
\begin{align}
    p! \sum_{|v| \geq K_2}
    \left\|
    C_p^* R(v)^{\otimes p} C_p
    \right\|_{\mathcal{S}_1(\clh_2)}
    &=
    p! \sum_{|v| \geq K_2}
    \left\|
    (Q^{-1/2}_p C_p)^* \Gamma(v)^{\otimes p} (Q^{-1/2}_p C_p)
    \right\|_{\mathcal{S}_1(\clh_2)}
    \nonumber
    \\
    &\leq
    p!\,\|Q^{-1/2}_p C_p\|_{\op(\clh_2,\clh_1^{\otimes p})}^2
    \sum_{|v| \ge K_2}
    \| \Gamma(v)^{\otimes p} \|_{\mathcal{S}_1(\clh_1^{\otimes p})}
    \label{eq:R2argument9}
    \\
    &\leq
    p!\,\|Q^{-1/2}_p C_p\|_{\op(\clh_2,\clh_1^{\otimes p})}^2
    \sum_{v \in \ZZ}
    \| \Gamma(v) \|_{\mathcal{S}_1(\clh_1)}^{q},
    \label{eq:R2argument10}
\end{align}
where \eqref{eq:R2argument9} follows by Theorem 18.11 (g) in \cite{conway2000course}. In \eqref{eq:R2argument9}, we use the definition of $K_2$ given in \eqref{eq:de:theta_theta-2}.

Then, by combining \eqref{eq:R1argument7} with \eqref{eq:R2argument7} in Case 1, or with
\eqref{eq:R2argument10} in Case 2, we obtain
\begin{equation} \label{eq:cond-1-bound-cont00}
    \| \clt_{Z_p} \|_{\mathcal{S}_1(\clh_2)}
    \le  \left( 
    4 K_1 \E \| G^p[X_1] \|^2_{\clh_2} + M_{1,p}   \right) \mathds{1}_{\{\mathfrak{C}=1\}} +\left( 
    4 K_2 \E \| G^p[X_1] \|^2_{\clh_2} + M_{2,p}  \right) \mathds{1}_{\{\mathfrak{C}=2\}}
    < \infty,
\end{equation}
where the constants $K_\mfc$, $\mfc=1,2$, are defined in
\eqref{eq:de:theta_theta-1} and \eqref{eq:de:theta_theta-2}, and
\begin{equation} \label{eq:def-Mi-theta-i}
    M_{\mfc,p} =
    \begin{cases}
        \E \| G^p[X_1] \|^2_{\clh_2}
        \displaystyle\sum_{v \in \ZZ } \vartheta_1(v)^{q},
        & \mathfrak{C} = 1, \\[0.4cm]
        p!\,\| Q^{-1/2}_p C_p \|^2_{\op(\clh_2,\clh_1^{\otimes p})}
        \displaystyle\sum_{v \in \ZZ }
        \vartheta_2(v)^{q},
        & \mfc = 2
    \end{cases}, 
    \quad \vartheta_\mfc(v) = \begin{cases}
        \| R(v) \|_{\op(\clh_1)} & \mfc = 1, \\
         \| \Gamma(v) \|_{\mathcal{S}_1(\clh_1)} & \mfc = 2.
    \end{cases}
\end{equation}

Turning back to the two summands in \eqref{eq:twosummands-condition-1}, we show separately that they converge to zero. For the first one, assuming that $n \ge K_\mfc$,
\begin{multline}
        \Bigg\|
    \sum_{i,j =1}^\infty
    p! \sum_{|v| \geq n}
    \sum_{\bfr,\bfs \in \NN^p}
    b_{i,\bfr} b_{j,\bfs}
    \prod_{k=1}^p \rho_{r_k s_k}(v)\,
    (v_i \otimes v_j)
   \Bigg\|_{\mathcal{S}_1(\clh_2)}
    = \left\| \sum_{|v| \ge n} p! C_p^* R(v)^{\otimes p} C_p \right\|_{\mathcal{S}_1(\clh_2)} \\
    \le \E \| G^p[X_1] \|^2_{\clh_2}
    \sum_{|v| \geq n} \| R(v) \|_{\op(\clh_1)}^{q} \mathds{1}_{\{\mfc=1\}} +     p!\,\|Q^{-1/2}_p C_p\|_{\op(\clh_2,\clh_1^{\otimes p})}^2
    \sum_{|v| \geq n}
    \| \Gamma(v) \|_{\mathcal{S}_1(\clh_1)}^{q} \mathds{1}_{\{\mfc=2\}}, \label{eq:cond-1-first0-term}
\end{multline}
where for the first line we used \eqref{eq:rewrite-as-operators} and for the second line we have used calculations identical to \eqref{eq:R2argument7} and \eqref{eq:R2argument10} for Assumptions \ref{thm:ass-2.2} and \ref{thm:ass-2.1} respectively. The convergence to zero follows by the summability of the series. 

For the second summand in \eqref{eq:twosummands-condition-1}, note that
\begin{align}
    &
    \Bigg\|
    \sum_{i,j =1}^\infty
    p! \sum_{|v| < n} \frac{|v|}{n}
    \sum_{\bfr,\bfs \in \NN^p}
    b_{i,\bfr} b_{j,\bfs}
    \prod_{k=1}^p \rho_{r_k s_k}(v)\,
    (v_i \otimes v_j)
    \Bigg\|_{\mathcal{S}_1(\clh_2)}
    \nonumber
    \\
    &\leq
    p!
    \sum_{|v| \leq K_\mfc-1}
    \frac{|v|}{n}
    \left\|
    C_p^* R(v)^{\otimes p} C_p
    \right\|_{\mathcal{S}_1(\clh_2)}
    +
    p!
    \sum_{n \geq |v| \geq K_\mfc}
    \frac{|v|}{n}
    \left\|
    C_p^* R(v)^{\otimes p} C_p
    \right\|_{\mathcal{S}_1(\clh_2)}
    \nonumber
    \\
    &\leq
    4  K_\mfc^2 \frac{1}{n}
    \E \| G^p[X_1] \|^2_{\clh_2}
    +
    \E \| G^p[X_1] \|^2_{\clh_2}
    \sum_{ n \ge |v| \ge K_1}
    \frac{|v|}{n}\,
    \| R(v) \|_{\op(\clh_1)}^{q}\,
    \mathds{1}_{\{\mfc=1\}}
    \nonumber
    \\
    &\hspace{2cm}+
    p!\,\| Q^{-1/2}_p C_p \|^2_{\op(\clh_2,\clh_1^{\otimes p})}
    \sum_{n \ge |v| \ge K_2}
    \frac{|v|}{n}\,
    \| \Gamma(v) \|_{\mathcal{S}_1(\clh_1)}^{q}\,
    \mathds{1}_{\{\mfc=2\}}
    \;\longrightarrow\; 0. \label{eq:cond-1-second0-term}
\end{align}
Indeed, the first term is of order $n^{-1}$, while in the remaining term we use dominated convergence theorem.

\noindent
\textit{Condition \ref{item1.1:thm-infinite-expansions-trace-sec-mom}:}
Letting $\clt_Z \doteq \sum_{p=q}^\infty \clt_{Z_p}$, the non-degeneracy of $\covop_{Z}$ follows by assumption and its alternative representation in \eqref{eq:def-T}; cf. with \eqref{eq:cov-Z-repres} below. Moreover, we have
\begin{align}
    &\| \clt_Z \|_{\mathcal{S}_1(\clh_2)}
    \le
    \sum_{p=q}^\infty
    \| \clt_{Z_p} \|_{\mathcal{S}_1(\clh_2)} \\
    &\le \left( 4 K_1 \E \| G[X_1] \|^2_{\clh_2} +  \E \| G[X_1] \|^2_{\clh_2} \sum_{v \in \ZZ} \|R(v)\|^q_{\operatorname{op}(\clh_1)} \right) \mathds{1}_{\{\mfc=1\}} \\
    &\quad+ \left( 
    4 K_2 \E \| G[X_1] \|^2_{\clh_2} +  \displaystyle\sum_{v \in \ZZ }
        \| \Gamma(v) \|_{\mathcal{S}_1(\clh_1)}^{q} \sum_{p=q}^\infty p!\,\| Q^{-1/2}_p C_p \|^2_{\op(\clh_2,\clh_1^{\otimes p})}  \right) \mathds{1}_{\{\mfc=2\}}
    < \infty,
    \label{eq:trace-class-total-lim-cov}
\end{align}
by \eqref{eq:cond-1-bound-cont00} and \eqref{eq:finite-secon-moment-b}. 

\noindent
\textit{Condition \ref{item:cond2}:}
Recall from \eqref{eq:Hilbert_expansion_Chaos_decomposition} that
$h_{n,p}$ is written in terms of $h_{n,p,i}$ in \eqref{eq:hpnitilde}.
Then, for $l=1,\dots,p-1$, we have the following identity for the contractions:
\begin{align}
&
    h_{n,p,i} \otimes_l h_{n,p,i}
    \nonumber
\\
&=
    \frac{1}{n}
    \sum_{k_1,k_2=1}^n
    \sum_{\bfr,\bfs \in \NN^p}
    b_{i,\bfr} b_{i,\bfs}
    \big(
    \varepsilon_{r_1 k_1} \otimes \dots \otimes \varepsilon_{r_p k_1}
    \big)
    \otimes_l
    \big(
    \varepsilon_{s_1 k_2} \otimes \dots \otimes \varepsilon_{s_p k_2}
    \big)
    \nonumber
\\
&=
    \frac{1}{n}
    \sum_{k_1,k_2=1}^n
    \sum_{\bfr,\bfs \in \NN^p}
    b_{i,\bfr} b_{i,\bfs}
    \prod_{m=1}^{l}
    \langle \varepsilon_{r_m k_1},
    \varepsilon_{s_m k_2} \rangle_{\mfh}
    \nonumber
\\
&\hspace{1.5cm}
    \times
    \big(
    \varepsilon_{r_{l+1} k_1} \otimes \dots \otimes \varepsilon_{r_p k_1}
    \otimes
    \varepsilon_{s_{l+1} k_2} \otimes \dots \otimes \varepsilon_{s_p k_2}
    \big)
    \nonumber
\\
&=
    \frac{1}{n}
    \sum_{k_1,k_2=1}^n
    \sum_{\bfr,\bfs \in \NN^p}
    b_{i,\bfr} b_{i,\bfs}
    \prod_{m=1}^{l}
    \rho_{r_m s_m}(k_1-k_2)
    \nonumber
\\
&\hspace{1.5cm}
    \times
    \big(
    \varepsilon_{r_{l+1} k_1} \otimes \dots \otimes \varepsilon_{r_p k_1}
    \otimes
    \varepsilon_{s_{l+1} k_2} \otimes \dots \otimes \varepsilon_{s_p k_2}
    \big).
    \label{eq:contr-cond-4-2}
\end{align}
Taking the norm in $\mfh^{\otimes 2(p-l)}$ and denoting, for $a=1,2$, the $m$-th elements of
$\bfr_a \in \NN^p$ and $\bfs_a \in \NN^p$ by $r_{a,m}$ and $s_{a,m}$, respectively, gives
\begin{align}
&
    \| h_{n,p,i} \otimes_l h_{n,p,i} \|_{\mfh^{\otimes 2(p-l)}}^2
    \nonumber
\\
&=
    \frac{1}{n^2}
    \sum_{k_1,k_2,k_3,k_4=1}^n
    \sum_{\substack{\bfr_1,\bfr_2 \in \NN^p\\
                    \bfs_1,\bfs_2 \in \NN^p}}
    b_{i,\bfr_1} b_{i,\bfs_1}
    b_{i,\bfr_2} b_{i,\bfs_2}
    \prod_{m=1}^{l}
    \rho_{r_{1,m} s_{1,m}}(k_1-k_2)
    \prod_{m=1}^{l}
    \rho_{r_{2,m} s_{2,m}}(k_3-k_4)
    \nonumber
\\
&\hspace{2.5cm}
    \times
    \prod_{m=l+1}^{p}
    \rho_{r_{1,m} r_{2,m}}(k_1-k_3)
    \prod_{m=l+1}^{p}
    \rho_{s_{1,m} s_{2,m}}(k_2-k_4).
    \label{eq:contr-cond-4-3a}
\end{align}

Introduce, for each $i\ge1$ and $l=1,\dots,p-1$, the
operator $B_{p,l}^{(i)}
    :
    \clh_1^{\otimes l}
    \to
    \clh_1^{\otimes(p-l)}$, which, for $    \boldsymbol{\alpha}
    =
    (\alpha_1,\dots,\alpha_l)\in\NN^l,
    \boldsymbol{\beta}
    =
    (\beta_1,\dots,\beta_{p-l})\in\NN^{p-l}$, acts on the tensor basis elements by
\[
    B_{p,l}^{(i)}
    \big(
    e_{\alpha_1} \otimes \dots \otimes e_{\alpha_l}
    \big)
    \doteq 
    \sum_{\boldsymbol{\beta}\in\NN^{p-l}}
    b_{i,(\boldsymbol{\alpha},\boldsymbol{\beta})}
    (e_{\beta_1} \otimes \dots \otimes e_{\beta_{p-l}}).
\]
Here \((\boldsymbol{\alpha},\boldsymbol{\beta})\in\NN^p\) denotes
concatenation. Define also
\[
    M_{i,l}(v)
    \doteq
    B_{p,l}^{(i)}
    R(v)^{\otimes l}
    \big(B_{p,l}^{(i)}\big)^*
    \in
    \mathcal L\big(\clh_1^{\otimes(p-l)}\big).
\]
We also write $e_{\boldsymbol{\beta}} \doteq e_{\beta_1} \otimes \dots \otimes e_{\beta_{p-l}}$ for $\boldsymbol{\beta} \in \NN^{p-l}$.
Then, \eqref{eq:contr-cond-4-3a} can be rewritten as
\begin{align}
&
    \| h_{n,p,i} \otimes_l h_{n,p,i} \|_{\mfh^{\otimes 2(p-l)}}^2
    \nonumber
\\
&=
    \frac1{n^2}
    \sum_{k_1,k_2,k_3,k_4=1}^n
    \sum_{\boldsymbol{\beta}_1,\boldsymbol{\delta}_1, \boldsymbol{\beta}_2,\boldsymbol{\delta}_2\in\NN^{p-l}}
    \left\langle
    e_{\boldsymbol{\beta}_1},
    M_{i,l}(k_1-k_2)e_{\boldsymbol{\delta}_1}
    \right\rangle_{\clh_1^{\otimes(p-l)}}
    \left\langle
    e_{\boldsymbol{\delta}_1},
    R(k_2-k_4)^{\otimes(p-l)}
    e_{\boldsymbol{\delta}_2}
    \right\rangle_{\clh_1^{\otimes(p-l)}}
    \nonumber
\\
&\hspace{1.5cm}
    \times
    \left\langle
    e_{\boldsymbol{\delta}_2},
    M_{i,l}(k_3-k_4)^*
    e_{\boldsymbol{\beta}_2}
    \right\rangle_{\clh_1^{\otimes(p-l)}}
    \left\langle
    e_{\boldsymbol{\beta}_2},
    R(k_1-k_3)^{\otimes(p-l)}
    e_{\boldsymbol{\beta}_1}
    \right\rangle_{\clh_1^{\otimes(p-l)}}
    \nonumber
\\
&=
    \frac1{n^2}
    \sum_{k_1,k_2,k_3,k_4=1}^n
    \sum_{\boldsymbol{\beta}_1\in\NN^{p-l}}
    \left\langle
        e_{\boldsymbol{\beta}_1},
        M_{i,l}(k_1-k_2)
        R(k_2-k_4)^{\otimes(p-l)}
    \right.
    \nonumber
\\
&\hspace{5.2cm}
    \left.
        \times
        M_{i,l}(k_3-k_4)^*
        R(k_1-k_3)^{\otimes(p-l)}
        e_{\boldsymbol{\beta}_1}
    \right\rangle_{\clh_1^{\otimes(p-l)}}
    \nonumber
\\
&=
    \frac1{n^2}
    \sum_{k_1,k_2,k_3,k_4=1}^n
    \tr_{\clh_1^{\otimes(p-l)}}\!\left[
    M_{i,l}(k_1-k_2)
    R(k_2-k_4)^{\otimes(p-l)}
    M_{i,l}(k_3-k_4)^*
    R(k_1-k_3)^{\otimes(p-l)}
    \right].
    \label{eq:contr-cond-4-3b}
\end{align}
We distinguish again the two cases corresponding to Assumptions
\ref{thm:ass-2.2} and \ref{thm:ass-2.1}.

\textit{Case 1:}
Recall the function $\vartheta_1(v)$ from \eqref{eq:def-Mi-theta-i}.
Then, \eqref{eq:contr-cond-4-3b} yields
\begin{align}
&
\tr_{\clh_1^{\otimes(p-l)}}\!\left[
M_{i,l}(k_1-k_2)
R(k_2-k_4)^{\otimes(p-l)}
M_{i,l}(k_3-k_4)^*
R(k_1-k_3)^{\otimes(p-l)}
\right]
\nonumber
\\
&\le
\left\|
M_{i,l}(k_1-k_2)
R(k_2-k_4)^{\otimes(p-l)}
\right\|_{\mathcal S_2(\clh_1^{\otimes(p-l)})}
\nonumber
\\
&\hspace{1.5cm}
\times
\left\|
M_{i,l}(k_3-k_4)^*
R(k_1-k_3)^{\otimes(p-l)}
\right\|_{\mathcal S_2(\clh_1^{\otimes(p-l)})}
\label{al:Cond3-al1}
\\
&\le
\|M_{i,l}(k_1-k_2)\|_{\mathcal S_2(\clh_1^{\otimes(p-l)})}
\|M_{i,l}(k_3-k_4)\|_{\mathcal S_2(\clh_1^{\otimes(p-l)})}
\nonumber
\\
&\hspace{1.5cm}
\times
\|R(k_2-k_4)\|_{\op(\clh_1)}^{p-l}
\|R(k_1-k_3)\|_{\op(\clh_1)}^{p-l}
\label{al:Cond3-al2}
\\
&\le
\|B_{p,l}^{(i)}\|_{\mathcal S_2(\clh_1^{\otimes l},\clh_1^{\otimes(p-l)})}^{4}
\|R(k_1-k_2)\|_{\op(\clh_1)}^{l}
\|R(k_3-k_4)\|_{\op(\clh_1)}^{l}
\nonumber
\\
&\hspace{1.5cm}
\times
\|R(k_1-k_3)\|_{\op(\clh_1)}^{p-l}
\|R(k_2-k_4)\|_{\op(\clh_1)}^{p-l}
\label{al:Cond3-al3}
\\
&\le
\|\mathfrak{b}_{p,i}\|_{\clh_1^{\otimes p}}^{4}
\vartheta_1(k_1-k_2)^l
\vartheta_1(k_2-k_4)^{p-l}
\Big(
\vartheta_1(k_3-k_4)^p
+
\vartheta_1(k_1-k_3)^p
\Big).
\label{al:Cond3-al5}
\end{align}
Here \eqref{al:Cond3-al1} follows by 
Problem 28(c) (Section 6) in \cite{reed1972methods}, \eqref{al:Cond3-al2} and \eqref{al:Cond3-al3} are due to
the ideal property of Schatten classes and from the definition of
$M_{i,l}(v)$.
The relation \eqref{al:Cond3-al5} is due to
\[
\|B_{p,l}^{(i)}\|_{\mathcal S_2(\clh_1^{\otimes l},\clh_1^{\otimes(p-l)})}^2
=
\sum_{\boldsymbol{\alpha}\in\NN^l}
\sum_{\boldsymbol{\beta}\in\NN^{p-l}}
|b_{i,(\boldsymbol{\alpha},\boldsymbol{\beta})}|^2
=
\sum_{\bfr\in\NN^p}|b_{i,\bfr}|^2
=
\|\mathfrak{b}_{p,i}\|_{\clh_1^{\otimes p}}^2.
\]
Finally, \eqref{al:Cond3-al5} uses Young's inequality as follows
\begin{equation} \label{eq:theta-1-young}
    \vartheta_1(k_3-k_4)^l
\vartheta_1(k_1-k_3)^{p-l}
=
\Big(\vartheta_1(k_3-k_4)^p\Big)^{l/p}
\Big(\vartheta_1(k_1-k_3)^p\Big)^{(p-l)/p}
\le
\vartheta_1(k_3-k_4)^p
+
\vartheta_1(k_1-k_3)^p.
\end{equation}
Therefore,
\begin{equation} \label{eq:cond-iii-bound-1}
    \|h_{n,p,i}\otimes_l h_{n,p,i}\|_{\mfh^{\otimes 2(p-l)}}^2
\le
2\|\mathfrak{b}_{p,i}\|_{\clh_1^{\otimes p}}^4
\frac1n
\Big(\sum_{v\in\ZZ}\|R(v)\|_{\op(\clh_1)}^p\Big)
\Big(\sum_{|v|\le n}\|R(v)\|_{\op(\clh_1)}^l\Big)
\Big(\sum_{|v|\le n}\|R(v)\|_{\op(\clh_1)}^{p-l}\Big).
\end{equation}

\textit{Case 2:}
Under Assumption \ref{thm:ass-2.1} recall $\vartheta_2(v)$ from
\eqref{eq:def-Mi-theta-i}. Recalling also the decomposition in
\eqref{eq:def-Qp}, we first introduce the operators $A_{i,l} :\clh_1^{\otimes l}
    \to
    \clh_1^{\otimes(p-l)}$ and $\widetilde M_{i,l}(v)$ given by 
\begin{equation}
        A_{i,l}
    \doteq
    Q_{p-l}^{-1/2}
    B_{p,l}^{(i)}
    Q_l^{-1/2}, \quad    \widetilde M_{i,l}(v)
    \doteq
    Q_{p-l}^{-1/2}
    M_{i,l}(v)
    Q_{p-l}^{-1/2}.
\end{equation}
From the
definition of $B_{p,l}^{(i)}$, we have
\begin{equation} \label{eq:Ai-l-bound}
\begin{aligned}
\|A_{i,l}\|_{\op(\clh_1^{\otimes l},
    \clh_1^{\otimes(p-l)})}
    \le
    \|A_{i,l}\|_{\mathcal S_2(\clh_1^{\otimes l},
    \clh_1^{\otimes(p-l)})}
    =
    \|Q_p^{-1/2}\mathfrak b_{p,i}\|_{\clh_1^{\otimes p}}\\
    =
    \|Q_p^{-1/2} C_p v_i\|_{\clh_1^{\otimes p}}
    \le
    \|Q_p^{-1/2} C_p\|_{\op(\clh_2,\clh_1^{\otimes p})} < \infty,    
\end{aligned}
\end{equation}
where the finiteness at the end follows by Assumption \ref{thm:ass-2.1}.
Since, by \eqref{eq:def-Qp}, $R(v)^{\otimes l}
    =
    Q_l^{-1/2}
    \Gamma(v)^{\otimes l}
    Q_l^{-1/2}$,
    we obtain $\widetilde M_{i,l}(v)
    =
    A_{i,l}
    \Gamma(v)^{\otimes l}
    A_{i,l}^* $. Hence, 
\begin{align}
    \|\widetilde M_{i,l}(v)\|_{\op(\clh_1^{\otimes(p-l)})}
    &= \| A_{i,l}
    \Gamma(v)^{\otimes l}
    A_{i,l}^* \|_{\op(\clh_1^{\otimes(p-l)})}  \\
    &\le
    \|A_{i,l}\|_{\op(\clh_1)}^2
    \|\Gamma(v)^{\otimes l}\|_{\op(\clh_1^{\otimes l})}
    \nonumber \\
    &\le
    \|A_{i,l}\|_{\op(\clh_1)}^2
    \|\Gamma(v)\|_{\mathcal S_1(\clh_1)}^l .
    \label{eq:Mtilde-case2-bound}
\end{align}

We return to the trace term in \eqref{eq:contr-cond-4-3b}. We estimate it as follows
\begin{align}
&
\tr_{\clh_1^{\otimes(p-l)}}\!\left[
M_{i,l}(k_1-k_2)
R(k_2-k_4)^{\otimes(p-l)}
M_{i,l}(k_3-k_4)^*
R(k_1-k_3)^{\otimes(p-l)}
\right]
\nonumber
\\
&=
\tr_{\clh_1^{\otimes(p-l)}}\!\left[
\widetilde M_{i,l}(k_1-k_2)
\Gamma(k_2-k_4)^{\otimes(p-l)}
\widetilde M_{i,l}(k_3-k_4)^*
\Gamma(k_1-k_3)^{\otimes(p-l)}
\right] \label{eq:8484} \\
&\le
\|\widetilde M_{i,l}(k_1-k_2)\|_{\op(\clh_1^{\otimes(p-l)})}
\|\Gamma(k_2-k_4)^{\otimes(p-l)}\|_{\mathcal S_1(\clh_1^{\otimes (p-l)})} 
\\&\hspace{2cm}
\|\widetilde M_{i,l}(k_3-k_4)\|_{\op(\clh_1^{\otimes(p-l)})}
\|\Gamma(k_1-k_3)^{\otimes(p-l)}\|_{\mathcal S_1(\clh_1^{\otimes (p-l)})} \label{al:Cond3-al-Case2-3} \\
&\le \|Q_p^{-1/2} C_p\|_{\op(\clh_2,\clh_1^{\otimes p})}^{4}
\vartheta_2(k_1-k_2)^l
\vartheta_2(k_3-k_4)^l
\vartheta_2(k_2-k_4)^{p-l}
\vartheta_2(k_1-k_3)^{p-l}.
\label{eq:case2-trace-rewritten} \\
&\le \|Q_p^{-1/2} C_p\|_{\op(\clh_2,\clh_1^{\otimes p})}^{4}
\vartheta_2(k_1-k_2)^l
\vartheta_2(k_2-k_4)^{p-l}
\Big(
\vartheta_2(k_3-k_4)^p
+
\vartheta_2(k_1-k_3)^p
\Big).
\label{al:Cond3-al-Case2-5}
\end{align}
For \eqref{eq:8484}, we used
\eqref{eq:def-Qp}, together with the cyclicity of the trace; for \eqref{al:Cond3-al-Case2-3}, we used the ideal property of the trace-class; for \eqref{eq:case2-trace-rewritten}, we used that $    \|\Gamma(v)^{\otimes m}\|_{\mathcal S_1(\clh_1^{\otimes m})} = \|\Gamma(v)\|_{\mathcal S_1(\clh_1)}^m$. 
Finally, \eqref{al:Cond3-al-Case2-5} follows from Young's inequality as in \eqref{eq:theta-1-young}, by replacing $\vartheta_1$ with $\vartheta_2$.
    
Then, substituting \eqref{al:Cond3-al-Case2-5} into
\eqref{eq:contr-cond-4-3b}, we obtain
\begin{multline} \label{eq:cond-iii-ass-2}
    \|h_{n,p,i}\otimes_l h_{n,p,i}\|_{\mfh^{\otimes 2(p-l)}}^2
\\
\le
2
\|Q_p^{-1/2}C_p\|_{\op(\clh_2,\clh_1^{\otimes p})}^{4}
\frac1n
\Big(\sum_{v\in\ZZ}\|\Gamma(v)\|_{\mathcal S_1(\clh_1)}^p\Big)
\Big(\sum_{|v|\le n}\|\Gamma(v)\|_{\mathcal S_1(\clh_1)}^l\Big)
\Big(\sum_{|v|\le n}\|\Gamma(v)\|_{\mathcal S_1(\clh_1)}^{p-l}\Big).
\end{multline}

Combining \eqref{eq:cond-iii-bound-1} and \eqref{eq:cond-iii-ass-2}, it follows that for all $l = 1, \dots, p-1$ and all $p \ge 2$, 
\begin{align}
  \|h_{n,p,i}\otimes_l h_{n,p,i}\|_{\mfh^{\otimes 2(p-l)}}^2  
  &\le
2\|\mathfrak{b}_{p,i}\|_{\clh_1^{\otimes p}}^4
\Big(\sum_{v\in\ZZ}\|R(v)\|_{\op(\clh_1)}^p\Big)
\Big( n^{-1 + \frac{l}{p}} \sum_{|v|\le n}\|R(v)\|_{\op(\clh_1)}^l\Big) \\
&\hspace{3cm} \times
\Big(n^{-1 + \frac{p-l}{p}} \sum_{|v|\le n}\|R(v)\|_{\op(\clh_1)}^{p-l}\Big) \mathds{1}_{\{\mfc=1\}} \\
&\quad + 2 \|Q_p^{-1/2}C_p\|_{\op(\clh_2,\clh_1^{\otimes p})}^{4}
\Big(   \sum_{v\in\ZZ}\|\Gamma(v)\|_{\mathcal S_1(\clh_1)}^p\Big)
\Big( n^{-1 + \frac{l}{p}} \sum_{|v|\le n}\|\Gamma(v)\|_{\mathcal S_1(\clh_1)}^l\Big) \\
&\hspace{3cm} \times
\Big(n^{-1 + \frac{p-l}{p}}\sum_{|v|\le n}\|\Gamma(v)\|_{\mathcal S_1(\clh_1)}^{p-l}\Big) \mathds{1}_{\{\mfc=2\}}  \to 0 \label{eq:hnpi-hnpi} ,
\end{align}
as $n \to \infty$.

\textit{Condition \ref{item:cond1}:} 
Recall $\theta_{\mathfrak{C}}$ from \eqref{eq:de:theta_theta-1}. We can estimate
\begin{align}
\sum_{p=q}^\infty  \| \clt_{\mathcal{I}_p(h_{n,p})} \|_{\mathcal{S}_1(\mathcal{H}_2)} &= 
\sum_{p=q}^\infty   \sum_{i=1}^\infty p! \|h_{n,p,i}\|_{\mfh^{\otimes p}}^2 \nonumber \\
&\le 
\sum_{p=q}^\infty \left( \E \| G^p[X_1] \|^2_{\clh_2} 4 K_\mfc + 2 M_{\mfc,p} \right) 
\nonumber\\
&\le  
\E \|G[X_1]\|_{\clh_2}^2 ( 4 K_1 + \theta_1 ) \ind_{\{\mfc=1\}} \nonumber \\
&\quad+ 
\left( 4 K_2 \E \|G[X_1]\|_{\clh_2}^2 + 2 \theta_2 \sum_{p=q}^\infty p! \| Q_p^{-1/2} C_p \|^2_{\op(\clh_1^{\otimes p},\clh_2)}   \right)
\ind_{\{\mfc=2\}}. \label{eq:cond-1-bound-cont}
\end{align}

Since the bound in \eqref{eq:cond-1-bound-cont} does not depend on $p$ and $n$, it follows that
\begin{equation}
\lim_{N \to \infty} \sup_{n \ge 1}  \sum_{p=N+1}^\infty \sum_{i=1}^\infty p! \|h_{n,p,i}\|_{\mfh^{\otimes p}}^2 = 0.
\end{equation}

Since conditions \ref{item1:thm-infinite-expansions-covar-oper}--\ref{item:cond1} are satisfied, 
we can infer that $S_n$ converges in distribution to a centered Gaussian random variable $Z$ with covariance operator $\sum_{p = q}^\infty \covop_{Z_p}$. 
We conclude that this coincides with the representation of $\covop_Z$ in \eqref{eq:def-T}. By Lemma  \ref{cor:Hilbert_expansion_Chaos_decomposition},
\begin{align}
  \sum_{p=q}^\infty \clt_{Z_p}\label{eq:cov-Z-repres} &= 
\sum_{p=q}^\infty
\sum_{i,j =1}^\infty
    p! \sum_{v \in \ZZ} \sum_{\bfr,\bfs \in \NN^p} b_{i,\bfr} b_{i,\bfs} \prod_{j=1}^p \rho_{r_j s_j}(v) (v_i \otimes v_j)
  \\&=   
    \lim_{n \to \infty } \sum_{p=q}^{\infty} 
    \E \left(  \sum_{i=1}^\infty 
    ( I_{p} \otimes \operatorname{Id}_{\clh_{2}} ) \left[ h_{n,p,i} \otimes v_i \right] 
    \otimes \sum_{i=1}^\infty 
    ( I_{p} \otimes \operatorname{Id}_{\clh_{2}} ) \left[ h_{n,p,i} \otimes v_i \right] \right) \nonumber
\\&= 
    \lim_{n \to \infty } \E \left( S_n \otimes S_n \right) 
=   
    \lim _{n \to \infty }
    \frac{1}{n} \sum_{k_{1},k_{2}=1}^{n}  \E (G[X_{k_{1}}]
    \otimes G[X_{k_{2}}] ) \nonumber
\\&= 
    \lim_{n \to \infty } 
    \sum_{k=1}^{n} \left( 1 - \frac{k}{n} \right) 
    \left( 
    \E (G[X_{1}] \otimes G[X_{k+1}]) + \E (G[X_{k+1}] \otimes G[X_{1}]) \right) + \E (G[X_{1}] \otimes G[X_{1}]) \nonumber \\
    &= \covop_Z. \nonumber
\end{align}
In the computations above, the interchange of series and limits in $n$ is justified through conditions \ref{item1:thm-infinite-expansions-covar-oper}--\ref{item:cond1}.

\subsection{Proof of Theorem \ref{th:cont_case}}

First, note that
\begin{equation*}
    V_n(t) 
    = \frac{1}{\sqrt{n}} \sum_{k=1}^{\lfloor nt \rfloor} G[X_k] 
    = \frac{1}{\sqrt{n}} \sum_{k=1}^n \1_{\big[\frac{k}{n},1 \big]}(t) G[X_k].
\end{equation*}
Remark that the quantity inside the sum depends on $n$, and so Theorem \ref{thm:main} is not readily available. Using Lemma \ref{cor:Hilbert_expansion_Chaos_decomposition}, we can infer that, in law,
\begin{equation} \label{eq:V_n_chaoes}
V_n(t)
=
\sum_{p=q}^\infty 
\mathcal{I}_p
\left[  h_{n,p,t} \right]
=
\sum_{p=q}^\infty 
(I_p \otimes \operatorname{Id}_{\clh_2})
\left[  h_{n,p,t} \right]
\hspace{0.2cm}
\text{ with }
\hspace{0.2cm}
h_{n,p,t} \doteq \sum_{i=1}^\infty \left( \widehat h_{n,p,i,t} \otimes v_i \right),
\end{equation}
where
\begin{equation} \label{eq:hathpnt}
    \widehat h_{n,p,i,t} 
    \doteq
    \frac{1}{\sqrt{n}} \sum_{k=1}^n \sum_{\bfl \in \NN^p} \1_{\big[\frac{k}{n},1\big]}(t) b_{i,\bfl}
    \left(
    \varepsilon_{l_1 k} \otimes \dots \otimes \varepsilon_{l_p k} \right)
\end{equation}
for coefficients $\{b_{i,\bfl}\}_{i \in \NN, \bfl \in \NN^p}$ as in Lemma \ref{cor:Hilbert_expansion_Chaos_decomposition}. Moreover, recall $h_{n,p,i}$ defined in \eqref{eq:hpnitilde}.
For $k = 1,\dots, n$, we view $\1_{\big[\frac{k}{n},1\big]}(\cdot)$ as an element of $L^2([0,1])$.

We start with some preliminary estimates. First, recalling the non-negative, self-adjoint operator $\clt_B$ from \eqref{eq:BrownianMotion_int}, we have
\begin{equation} \label{eq:covopB-ineq}
    \|\clt_B \|_{\mathcal{S}_1(L^2([0,1]))} = \tr(\clt_B) = \int_{[0,1]} \kappa(t,t) dt  = 1/2.
\end{equation}
Moreover, consider two Hilbert spaces $\clh_1,\clh_2$ with respective bases $\{e_j\}_{j \in \NN}, \{\widetilde{e}_j\}_{j \in \NN}$, and operators $\mathcal{Q}_i \in \mathcal{S}_1(\clh_i), i = 1,2$. Then, the operator $\mathcal{Q} = \mathcal{Q}_1 \otimes \mathcal{Q}_2 \in \mathcal{S}_1(\clh_1 \otimes \clh_2)$ and $\| \mathcal{Q} \|_{\mathcal{S}_1(\clh_1 \otimes \clh_2)} = \| \mathcal{Q}_1 \|_{\mathcal{S}_1(\clh_1)} \| \mathcal{Q}_2 \|_{\mathcal{S}_1(\clh_2)}$. If, moreover, $Q_i \ge 0$, then  
\begin{equation} \label{eq:Q-decomp-hs}
    \| \mathcal{Q} \|_{\mathcal{S}_1(\clh_1 \otimes \clh_2)} = \| \mathcal{Q}_1 \|_{\mathcal{S}_1(\clh_1)} \| \mathcal{Q}_2 \|_{\mathcal{S}_1(\clh_2)}  = 
    \sum_{j_1,j_2 =1}^\infty \langle \mathcal{Q}_1[e_{j_1}], e_{j_1} \rangle_{\clh_1} \langle \mathcal{Q}_2[\widetilde{e}_{j_2}], \widetilde{e}_{j_2} \rangle_{\clh_2} = \tr(Q_1 \otimes Q_2). 
\end{equation}

Following the proof idea of Theorem \ref{thm:main}, we apply Theorem 3.4 in \cite{duker2025fourth}, that is, we verify conditions \ref{item1:thm-infinite-expansions-covar-oper}--\ref{item:cond1}.

\noindent
\textit{Condition \ref{item1:thm-infinite-expansions-covar-oper}:}
We first derive the covariance operator $  \clt_{\mathcal{I}_p(  h_{n,p, \cdot} )}$ of an individual chaos in terms of \eqref{eq:hpnitilde} as
\begin{align}
    \clt_{\mathcal{I}_p(  h_{n,p, \cdot} )} = 
    \sum_{i,j =1}^\infty
    p! \frac{1}{n} \sum_{k_1, k_2=1}^n 
    \left(\1_{\big[\frac{k_1}{n},1\big]} \otimes \1_{\big[\frac{k_2}{n},1\big]}\right)
    \sum_{\bfr,\bfs \in \NN^p} b_{i,\bfr} b_{j,\bfs} \prod_{k=1}^p \rho_{r_k s_k}(k_1 - k_2) (v_i \otimes v_j).
\end{align}
The candidate limiting covariance operator for a single chaos is then given by
$\clt_{W_p} = \clt_{B} \otimes \clt_{Z_p}$ with $\clt_{Z_p}$ as in \eqref{eq:traceoflimp}.
We need to control its distance to $  \clt_{\mathcal{I}_p(  h_{n,p, \cdot} )}$ in trace-class norm
\begin{align}
    &\|  \clt_{\mathcal{I}_p(  h_{n,p, \cdot} )}
    - \clt_{W_p}\|_{\mathcal{S}_1(L^2([0,1]) \otimes \clh_2)}
    \\&\leq
    \|   \clt_{\mathcal{I}_p(  h_{n,p, \cdot} )}
    - 
    \clt_{B} \otimes   \clt_{\mathcal{I}_p(  h_{n,p, \cdot} )}
    \|_{\mathcal{S}_1(L^2([0,1]) \otimes \clh_2)}
    +
    \|
    \clt_{B} \otimes   \clt_{\mathcal{I}_p(  h_{n,p, \cdot} )}
    -
    \clt_{B} \otimes \clt_{Z_p}\|_{\mathcal{S}_1(L^2([0,1]) \otimes \clh_2)}
    \\    &\leq
    \| \clt_{\mathcal{I}_p(h_{n,p,\cdot})} 
    - 
    \clt_{B} \otimes   \clt_{\mathcal{I}_p(  h_{n,p, \cdot} )}
    \|_{\mathcal{S}_1(L^2([0,1]) \otimes \clh_2)}
    +
    \| \clt_{B} \|_{\mathcal{S}_1(L^2([0,1]))}
    \|
      \clt_{\mathcal{I}_p(  h_{n,p, \cdot} )}
    -
    \clt_{Z_p}\|_{\mathcal{S}_1(\clh_2)},
    \label{eq:seq-eq1}
\end{align}
where the first summand in \eqref{eq:seq-eq1} goes to zero by Lemma \ref{le:ana_to_Le53Bou20} and the second summand by \eqref{eq:covopB-ineq} and since $\|
     \clt_{\mathcal{I}_p(  h_{n,p, \cdot} )}
    -
    \clt_{Z_p}\|_{\mathcal{S}_1(\clh_2)} \to 0$ matches the verified condition \ref{item1:thm-infinite-expansions-covar-oper} in the proof of Theorem \ref{thm:main}.

\noindent
\textit{Condition \ref{item1.1:thm-infinite-expansions-trace-sec-mom}:} Letting $\clt_W \doteq  \sum_{r=q}^\infty \clt_{W_r}$, we have 
\begin{align}
    \| \covop_{W} \|_{\mathcal{S}_1(L^2([0,1]) \otimes \clh_2)}
    &=
    \| \covop_{B} \otimes \covop_{Z} \|_{\mathcal{S}_1(L^2([0,1]) \otimes \clh_2)}
    \leq
    \| \covop_{B} \|_{\mathcal{S}_1(L^2([0,1]))} \| \covop_{Z} \|_{\mathcal{S}_1(\clh_2)}
    < \infty, \label{eq:356-seq}
\end{align}
by \eqref{eq:covopB-ineq} and condition \ref{item1.1:thm-infinite-expansions-trace-sec-mom} in the proof of Theorem \ref{thm:main}. The non-degeneracy of $\covop_{W}$ follows by the non-degeneracy of $\clt_B$ and the non-degeneracy assumption of $\clt_Z$; see also \eqref{eq:cov-Z-repres}.

\noindent
\textit{Condition \ref{item:cond2}:} 
For $i\in \NN$ and $p \ge 2$,
    \begin{equation}
    \| \widehat h_{n,p,i,\cdot} \otimes_s \widehat h_{n,p,i, \cdot} \|_{\mathfrak{H}^{\otimes  2(p-s)}\otimes L^2([0,1])^{\otimes 2}}
    \leq
    \| h_{n,p,i} \otimes_s h_{n,p,i} \|_{\mathfrak{H}^{\otimes  2(p-s)}} \to 0,
    \label{al:lemma_cont_1_al2-seq} 
    \end{equation}
    for all $s=1,\dots,p-1$ and
where the inequality in \eqref{al:lemma_cont_1_al2-seq} follows since
\begin{equation} \label{eq:innerproductindicators-seq}
    \left\langle \1_{\big[\frac{k_1}{n},1\big]}, \1_{\big[\frac{k_2}{n},1\big]} \right\rangle_{L^2([0,1])} \leq 1,
\end{equation} 
and the convergence is due to condition \ref{item:cond2} in the proof of Theorem \ref{thm:main}.

\textit{Condition \ref{item:cond1}:} 
Note that
\begin{equation}
\lim_{N \to \infty} \sup_{n \ge 1}  \sum_{p=N+1}^\infty \sum_{i=1}^\infty p! \|\widehat{h}_{n,p,i,\cdot}\|_{\mfh^{\otimes p} \otimes L^2([0,1])}^2 
\leq
\lim_{N \to \infty} \sup_{n \ge 1}  \sum_{p=N+1}^\infty \sum_{i=1}^\infty p! \|h_{n,p,i}\|_{\mfh^{\otimes p}}^2
= 0.
\end{equation}
by \eqref{eq:innerproductindicators-seq} and the convergence follows by the proof of condition \ref{item:cond1} in the proof of Theorem \ref{thm:main}.
This concludes the proof.

\section{Quantitative version of Theorem \ref{thm:main}} \label{app:quant_version}

We state and prove here a quantitative version of Theorem \ref{thm:main}.
For that, we derive an explicit upper bound for the $d_2$ distance \eqref{eq:d2metric} between the partial sums \eqref{eq:partial_sum_G} and its limiting variable $Z$ derived in Theorem \ref{thm:main}.
To be more precise, we aim to derive an inequality of the form
\begin{align} \label{eq:Berry_quant}
    | \E(h[S_n]) - \E (h[Z]) |  \leq \mathcal{R}_n, \hspace{0.2cm} n \in \NN,
\end{align}
where $h \in \clc^2_b(\clh_2)$ with $\| h \|_{\clc^2_b(\clh_2)} \le 1$ and $\mathcal{R}_n \to 0$ as $n \to \infty$. An upper bound \eqref{eq:Berry_quant} quantifies the error that one makes when replacing the partial sum $S_n$ by its limiting variable $Z$.

Prior to stating our theorem, we recall the notations $\theta_\mfc, K_\mfc, \theta_\mfc(v), \mathfrak{b}_{p,i}$ and $M_{\mfc,p}$ from \eqref{eq:de:theta_theta-1}, \eqref{eq:de:theta_theta-2}, \eqref{eq:def-Cp-operator}, \eqref{eq:def-Mi-theta-i}. For $p\in\NN$, set
\begin{equation}\label{eq:remainder-terms-def-B}
    B_{\mfc,p}
    \doteq
    \mathds{1}_{\{p\ge q\}}
    \left(
    4K_\mfc \E \|G^p[X_1]\|_{\clh_2}^2
    + 2M_{\mfc,p}
    \right).
\end{equation}
Moreover, for $p\ge 2$ and $r=1,\dots,p-1$, define
\begin{align}
    \mathcal A_{n,p,r}^{(i)}
    &\doteq
    2\|\mathfrak{b}_{p,i}\|_{\clh_1^{\otimes p}}^4
    \Big(\sum_{v\in\ZZ}\|R(v)\|_{\op(\clh_1)}^p\Big)
    \Big( n^{-1 + \frac{r}{p}}
    \sum_{|v|\le n}\|R(v)\|_{\op(\clh_1)}^r\Big)
    \Big(n^{-1 + \frac{p-r}{p}}
    \sum_{|v|\le n}\|R(v)\|_{\op(\clh_1)}^{p-r}\Big)
    \mathds{1}_{\{\mfc=1\}}
    \nonumber \\
    &\quad+
    2\|Q_p^{-1/2}C_p\|_{\op(\clh_2,\clh_1^{\otimes p})}^{4}
    \Big(\sum_{v\in\ZZ}\|\Gamma(v)\|_{\mathcal S_1(\clh_1)}^p\Big)
    \Big(n^{-1 + \frac{r}{p}}
    \sum_{|v|\le n}\|\Gamma(v)\|_{\mathcal S_1(\clh_1)}^r\Big)
    \nonumber \\
    &\hspace{3cm}\times
    \Big(n^{-1 + \frac{p-r}{p}}
    \sum_{|v|\le n}\|\Gamma(v)\|_{\mathcal S_1(\clh_1)}^{p-r}\Big)
    \mathds{1}_{\{\mfc=2\}},
    \label{eq:remainder-terms-def-A}
\end{align}
and set $\mathcal A_{n,p,r}^{(i)}=0$ whenever $p<q$. For $r\ge q$ and $m,n\in\NN$, set
\begin{equation}\label{eq:explicit-coordinate-tail}
\begin{split}
    \mathfrak D_{n,r,m}
    &\doteq
    r!
    \sum_{|v|<n}
    \left(1-\frac{|v|}{n}\right)
    \sum_{j=m+1}^{\infty}
    \left|
    \left\langle
    \mathfrak b_{r,j},R(v)^{\otimes r}\mathfrak b_{r,j}
    \right\rangle_{\clh_1^{\otimes r}}
    \right|.
\end{split}
\end{equation}
Here $\mathfrak b_{r,j}=C_rv_j$, and the series defining
$\mathfrak D_{n,r,m}$ is finite for every fixed $n,r,m$ because
$C_r^*R(v)^{\otimes r}C_r$ is trace-class on $\clh_2$.

For $p\ge 1$, $1\le a<b$, and
$1\le \chi\le a\wedge b-1$, we let
\begin{equation} \label{eq:remainder-terms-def-quant-constants}
\begin{split}
    c_{p}(r)
    &\doteq
    p(r-1)! \binom{p-1}{r-1}^2 \sqrt{(2p-2r)!},
    \qquad r=1,\dots,p-1,
    \\
    c(a,b)
    &\doteq
    a!^{2} \binom{b-1}{a-1}^{2} (b-a)!,
    \\
    c(a,b,\chi)
    &\doteq
    \frac{a^{2}}{2}
    (\chi-1)!^{2}
    \binom{a-1}{\chi-1}^{2}
    \binom{b-1}{\chi-1}^{2}
    (a+b-2\chi)! .
\end{split}
\end{equation}
Next define, for $p\ge 2$,
\begin{equation}\label{eq:remainder-terms-def-gamma-p}
    \widetilde{\gamma}_{n,i,j}^{(p)}
    \doteq
    \frac{1}{2}
    \sum_{\chi=1}^{p-1}
    c_p(\chi)^2
    \left(
    \mathcal A_{n,p,p-\chi}^{(i)}
    +
    \mathcal A_{n,p,p-\chi}^{(j)}
    \right),
\end{equation}
and, for $l_1\neq l_2$,
\begin{equation}\label{eq:remainder-terms-def-gamma-l1-l2}
\begin{aligned}
    \widetilde{\gamma}_{n,i,j}^{(l_1,l_2)}
    &\doteq
    c(l_1,l_2)
    \frac{B_{\mfc,l_1}}{l_1!}
    \left(\mathcal A_{n,l_2,l_2-l_1}^{(j)}\right)^{1/2}
    \mathds{1}_{\{l_1<l_2\}}
    \\
    &\quad+
    c(l_2,l_1)
    \frac{B_{\mfc,l_2}}{l_2!}
    \left(\mathcal A_{n,l_1,l_1-l_2}^{(i)}\right)^{1/2}
    \mathds{1}_{\{l_2<l_1\}}
    \\
    &\quad+
    \sum_{\chi=1}^{l_1\wedge l_2-1}
    c(l_1\wedge l_2,l_1\vee l_2,\chi)
    \left(
    \mathcal A_{n,l_1,l_1-\chi}^{(i)}
    +
    \mathcal A_{n,l_2,l_2-\chi}^{(j)}
    \right).
\end{aligned}
\end{equation}

We define the five error quantities
\begin{equation} \label{eq:remainder-terms-def-quant-1}
\begin{split}
   \widetilde{\clr}_{1,M}
    &\doteq
    \sqrt{
    \sum_{p=M+1}^{\infty}
    \left(
    4K_\mfc \E \|G^p[X_1]\|_{\clh_2}^2
    +
    2M_{\mfc,p}
    \right)},
    \\
    \widetilde{\clr}_{2,n,M}
    &\doteq
   \frac{1}{2}\sum_{p=q}^M
    \Bigg\{
    \E \|G^p[X_1]\|_{\clh_2}^2
    \Bigg(
        \frac{4K_\mfc^2}{n}
        +
        \sum_{K_1\le |v|<n}
        \frac{|v|}{n}
        \theta_1(v)^q
        \mathds{1}_{\{\mfc=1\}}
        +
        \sum_{|v|\ge n}
        \theta_1(v)^q
        \mathds{1}_{\{\mfc=1\}}
    \Bigg)
    \nonumber \\
&\hspace{1.5cm}
    +
    p!\,\|Q_p^{-1/2}C_p\|_{\op(\clh_2,\clh_1^{\otimes p})}^2
    \Bigg(
        \sum_{K_2\le |v|<n}
        \frac{|v|}{n}
        \theta_2(v)^q
        \mathds{1}_{\{\mfc=2\}}
        +
        \sum_{|v|\ge n}
       \theta_2(v)^q
        \mathds{1}_{\{\mfc=2\}}
    \Bigg)
    \Bigg\}
    \\
    \widetilde{\clr}_{3,n,m,M}
    &\doteq
    \sqrt{m}\,M
    \sqrt{
    \sum_{i,j=1}^m
    \left(
    \sum_{p=2}^M
    \widetilde{\gamma}_{n,i,j}^{(p)}
    +
    \sum_{\substack{l_1,l_2=1\\ l_1\neq l_2}}^M
    \widetilde{\gamma}_{n,i,j}^{(l_1,l_2)}
    \right)
    },
    \\
    \widetilde{\clr}_{4,n,m,M}
    &\doteq
    \frac{M+2}{2}
    \sum_{r=q}^M
    \mathfrak D_{n,r,m},
    \\
    \widetilde{\clr}_{5,n,m,M}
    &\doteq
    \left( \sum_{p=q}^\infty B_{\mathfrak{C},p} \right)^{1/2}
    \left(
    (M+3)
    \sum_{r=q}^M
    \mathfrak D_{n,r,m}
    \right)^{1/2}.
\end{split}
\end{equation}

One can infer that, under the assumptions of Theorem \ref{thm:main},
$\widetilde{\clr}_{1,M}\to0$ as $M\to\infty$,
$\widetilde{\clr}_{2,n,M},\widetilde{\clr}_{3,n,m,M}\to0$
as $n\to\infty$ for all fixed $m,M$, and
$\widetilde{\clr}_{i,n,m,M}\to0$ as $m\to\infty$, for all fixed
$M\ge q$ and $n\in\NN$, for $i=4,5$. Hence Theorem
\ref{th:quantitave} below recovers Theorem \ref{thm:main} by taking first the limit $n \to \infty$, then $m \to \infty$, and finally $M \to \infty$.

\begin{theorem}[Quantitative Breuer--Major theorem for Hilbert space-valued random variables]
\label{th:quantitave}
Suppose that the assumptions of Theorem \ref{thm:main} hold. Then, for all
$n\in\NN$,
\begin{equation}
    d_2(S_n,Z)
    \le
    \inf_{M\ge q}
    \left[
    \widetilde{\clr}_{1,M}
    +
    \frac{1}{2}\widetilde{\clr}_{1,M}^2
    +
    \widetilde{\clr}_{2,n,M}
    +
    \inf_{m\ge1}
    \left(
    \widetilde{\clr}_{3,n,m,M}
    +
    \widetilde{\clr}_{4,n,m,M}
    +
    \widetilde{\clr}_{5,n,m,M}
    \right)
    \right].
\end{equation}
\end{theorem}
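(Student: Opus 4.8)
The plan is to reproduce, in the simpler discrete-time setting, the truncation argument used in the proof of Theorem~\ref{th:cont_case}, keeping every estimate explicit. Fix $M\ge q$. Recall the truncated partial sum $S_{n,M}\doteq\sum_{p=q}^{M}(I_p\otimes\operatorname{Id}_{\clh_2})[h_{p,n}]$; let $Z_{n,M}$ be a centered Gaussian element of $\clh_2$ whose covariance operator is $\covop_{S_{n,M}}$ from \eqref{eq:def_Tn}, and let $Z_M$ be a centered Gaussian element of $\clh_2$ with covariance operator $\covop_{Z_M}$ as in \eqref{eq:def-TM}. By the triangle inequality for $d_2$,
\begin{equation*}
d_2(S_n,Z)\le d_2(S_n,S_{n,M})+d_2(S_{n,M},Z_{n,M})+d_2(Z_{n,M},Z_M)+d_2(Z_M,Z),
\end{equation*}
and I would bound the four summands by $\clr_{1,M}$, $\sqrt{\clr_{2,n,M}+\clr_{3,n,M}}$, $\clr_{4,n,M}$, and $\clr_{1,M}$ respectively; taking the infimum over $M\ge q$ then gives the statement.

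\emph{The two tail terms and the Gaussian-to-Gaussian term.} For $h\in\clc^2_b(\clh_2)$ with $\|h\|_{\clc^2_b(\clh_2)}\le1$ one has $|h[x]-h[y]|\le\|x-y\|_{\clh_2}$, so $d_2(U,V)\le\sqrt{\E\|U-V\|^2_{\clh_2}}$ for square-integrable $\clh_2$-valued $U,V$, exactly as in \eqref{al:lemma_cont_1_al3}. With $(U,V)=(S_n,S_{n,M})$, orthogonality of distinct Wiener chaoses gives $\E\|S_n-S_{n,M}\|^2_{\clh_2}=\sum_{p=M+1}^\infty p!\,\|h_{p,n}\|^2_{\mfh^{\otimes p}\otimes\clh_2}\le(2K+2\theta)\sum_{p=M+1}^\infty\E\|G^p[X_1]\|^2_{\clh_2}=\clr_{1,M}^2$ by the $n$-uniform bound \eqref{eq:cond-1-bound-cont00}; hence $d_2(S_n,S_{n,M})\le\clr_{1,M}$, and the identical computation with $h_{p,n}$ replaced by $h_p$ (and \eqref{eq:cond-1-bound-cont00} passed to the limit, termwise in $p$) gives $d_2(Z_M,Z)\le\clr_{1,M}$. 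For the Gaussian pair, Corollary~3.3 of \cite{Bou20} gives $d_2(Z_{n,M},Z_M)\le\tfrac12\|\covop_{S_{n,M}}-\covop_{Z_M}\|_{HS(\clh_2)}$, and, writing both operators via \eqref{eq:def_Tn}, the level-$p$ part of the difference carries the factor $\frac1n\sum_{|v|<n}(n-|v|)\prod_m\rho_{l_mr_m}(v)-\sum_{v\in\ZZ}\prod_m\rho_{l_mr_m}(v)$; this is handled exactly as in the verification of Condition~\ref{item:cond1} by splitting $\ZZ$ into $|v|\le K$ (boundedly many terms, producing the $2K(K+1)/n$ contribution) and $|v|>K$ where $\prod_m|\rho_{l_mr_m}(v)|\le\theta^p(v)\le\theta^q(v)$, yielding $\sum_{|v|\le n}\theta^q(v)|v|/n+\sum_{|v|\ge n}\theta^q(v)$ after summing against $p!\sum_i\sum_{\bfl}b_{i,\bfl}^2=\E\|G^p[X_1]\|^2_{\clh_2}$. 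This gives $d_2(Z_{n,M},Z_M)\le\clr_{4,n,M}$.

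\emph{The chaotic term and the main obstacle.} Apply Theorem~4.3 of \cite{Bou20} to $S_{n,M}$ against the Gaussian $Z_{n,M}$ with matching covariance: the covariance-discrepancy term vanishes, leaving a bound of the form $\tfrac12\sqrt{\Lambda_1(n)+\Lambda_2(n)}$ with $\Lambda_1(n),\Lambda_2(n)$ as in \eqref{eq:Lam1}--\eqref{eq:Lam2}, built with the weights $c_{p,r}(l)$ of \eqref{eq:dfn-cpr} out of the contraction norms $\|h_{p,n}\otimes_l h_{p,n}\|^2_{\mfh^{\otimes2(p-l)}\otimes\clh_2^{\otimes2}}$ (for $1\le l\le p-1$) and $\|h_{p,n}\otimes_l h_{r,n}\|^2_{\mfh^{\otimes(p+r-2l)}\otimes\clh_2^{\otimes2}}$ (for $p\ne r$). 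These are precisely the quantities estimated in the verification of Condition~\ref{item:cond2}: by \eqref{eq:cond-2-1}--\eqref{eq:cond-2-5-1} and its $p=r$ analogue, $\|h_{p,n}\otimes_l h_{r,n}\|^2=\sum_{i,j}\|\widetilde h_{p,n,i}\otimes_l\widetilde h_{r,n,j}\|^2_{\mfh^{\otimes(p+r-2l)}}\le4A(p,r,n,l)$; substituting and using subadditivity of $\sqrt{\cdot}$ over the finite sums in $p$ (resp.\ over $p\ne r$) bounds this term by $\sqrt{\clr_{2,n,M}+\clr_{3,n,M}}$. The obstacle here is purely organizational rather than conceptual: no idea beyond the proofs of Theorems~\ref{thm:main} and~\ref{th:cont_case} is needed, but one must carry the constants $K,\theta,c_{p,r}(l)$ through cleanly, match the already-derived estimates \eqref{eq:cond-1-bound-cont00} and \eqref{eq:cond-2-5-1} to the quantities $A(p,r,n,s)$ and the $\clr_{i,n,M}$, write out the $p=r$ analogue of \eqref{eq:cond-2-5-1} (Condition~\ref{item:cond2a}) with its explicit constant, and verify that the factor $\tfrac12$ in Theorem~4.3 of \cite{Bou20} together with the constant from \eqref{eq:cond-2-5-1} are absorbed into the definitions \eqref{eq:quant-R} so that the chaotic term comes out as exactly $\sqrt{\clr_{2,n,M}+\clr_{3,n,M}}$.
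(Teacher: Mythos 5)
Your proposal is correct and follows essentially the same route as the paper: truncate the chaos at level $M$, control the two tails by $\clr_{1,M}$ via the Lipschitz bound and \eqref{eq:cond-1-bound-cont00}, and invoke Theorem~4.3 of \cite{Bou20} together with the contraction estimates \eqref{eq:cond-2-1}--\eqref{eq:cond-2-5-1} for the middle term. The only (harmless) difference is organizational: you insert the extra intermediate Gaussian $Z_{n,M}$ with covariance $\covop_{S_{n,M}}$ and handle the covariance discrepancy separately via Corollary~3.3 of \cite{Bou20}, whereas the paper applies Theorem~4.3 once to the pair $(S_{n,M},Z_M)$, absorbing both $\sqrt{\Lambda_1+\Lambda_2}$ and $\|\covop_{S_{n,M}}-\covop_{Z_M}\|_{HS(\clh_2)}$ into a single inequality; both yield the stated bound.
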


\begin{proof}
From \cite[Theorem 3.5]{duker2025fourth}, it follows that
\begin{equation} \label{thm:eq-quantitative-bound}
    d_2(S_n,Z)
    \le
    \inf_{M\ge1}
    \left[
    \clr_{1,M}
    +
    \clr_{6,M}
    +
    \clr_{2,n,M}
    +
    \inf_{m\ge1}
    \left(
    \clr_{3,n,m,M}
    +
    \clr_{4,n,m,M}
    +
    \clr_{5,n,m,M}
    \right)
    \right],
\end{equation}
where the bounds are given in \cite[(3.55)--(3.57)]{duker2025fourth}.
We rewrite these quantities in terms of the quantities introduced above.

First, using the tail estimate from \eqref{eq:cond-1-bound-cont}, we obtain
\begin{equation}
    \clr_{1,M}
    \doteq
    \sqrt{
    \sup_{n\ge1}
    \sum_{i=1}^{\infty}
    \sum_{p=M+1}^{\infty}
    p!\|h_{n,p,i}\|_{\mfh^{\otimes p}}^2
    }
    \le
    \sqrt{
    \sum_{p=M+1}^{\infty}
    \left(
    4K_\mfc \E \|G^p[X_1]\|_{\clh_2}^2
    +
    2M_{\mfc,p}
    \right)}
    =
    \widetilde{\clr}_{1,M}.
\end{equation}
Moreover, \eqref{eq:trace-class-total-lim-cov} gives
\begin{equation}
\begin{split}
    \clr_{6,M}
    \doteq
    \frac{1}{2}
    \sum_{p=M+1}^{\infty}
    \sum_{i=1}^{\infty}
    \langle \clt_{Z_p}v_i,v_i\rangle_{\clh_2}
    =
    \frac{1}{2}
    \sum_{p=M+1}^{\infty}
    \|\clt_{Z_p}\|_{\mathcal S_1(\clh_2)} 
    \\
    \le
    \frac{1}{2}
    \sum_{p=M+1}^{\infty}
    \left(
    4K_\mfc \E \|G^p[X_1]\|_{\clh_2}^2
    +
    M_{\mfc,p}
    \right) \le
    \frac{1}{2}\widetilde{\clr}_{1,M}^2 .
\end{split}
\end{equation}

Next, using \eqref{eq:twosummands-condition-1},
\eqref{eq:cond-1-first0-term}, and \eqref{eq:cond-1-second0-term}, we have
\begin{align}
&\clr_{2,n,M}
\doteq
    \frac{1}{2}
    \sum_{p=1}^M
    \|\clt_{\cli_p(h_{n,p})}-\clt_{Z_p}\|_{\mathcal S_1(\clh_2)}
    \nonumber \\
&\le
    \frac{1}{2}
    \sum_{p=q}^M
    \Bigg[
    \Bigg\|
    \sum_{i,j=1}^{\infty}
    p!
    \sum_{|v|\ge n}
    \sum_{\bfr,\bfs\in\NN^p}
    b_{i,\bfr}b_{j,\bfs}
    \prod_{k=1}^p\rho_{r_ks_k}(v)
    (v_i\otimes v_j)
    \Bigg\|_{\mathcal S_1(\clh_2)}
    \nonumber \\
&\hspace{2.5cm}
    +
    \Bigg\|
    \sum_{i,j=1}^{\infty}
    p!
    \sum_{|v|<n}
    \frac{|v|}{n}
    \sum_{\bfr,\bfs\in\NN^p}
    b_{i,\bfr}b_{j,\bfs}
    \prod_{k=1}^p\rho_{r_ks_k}(v)
    (v_i\otimes v_j)
    \Bigg\|_{\mathcal S_1(\clh_2)}
    \Bigg]
    \nonumber \\
&\le
    \frac{1}{2}\sum_{p=q}^M
    \Bigg\{
    \E \|G^p[X_1]\|_{\clh_2}^2
    \Bigg(
        \frac{4K_\mfc^2}{n}
        +
        \sum_{K_1\le |v|<n}
        \frac{|v|}{n}
        \|R(v)\|_{\op(\clh_1)}^q
        \mathds{1}_{\{\mfc=1\}}
        +
        \sum_{|v|\ge n}
        \|R(v)\|_{\op(\clh_1)}^q
        \mathds{1}_{\{\mfc=1\}}
    \Bigg)
    \nonumber \\
&\hspace{1.5cm}
    +
    p!\,\|Q_p^{-1/2}C_p\|_{\op(\clh_2,\clh_1^{\otimes p})}^2
    \Bigg(
        \sum_{K_2\le |v|<n}
        \frac{|v|}{n}
        \|\Gamma(v)\|_{\mathcal S_1(\clh_1)}^q
        \mathds{1}_{\{\mfc=2\}}
        +
        \sum_{|v|\ge n}
        \|\Gamma(v)\|_{\mathcal S_1(\clh_1)}^q
        \mathds{1}_{\{\mfc=2\}}
    \Bigg)
    \Bigg\}
    \\
    &=
    \widetilde{\clr}_{2,n,M}.
    \label{allastlinethirdsummand}
\end{align}

We turn to $\clr_{3,n,m,M}$. For every $i\in\NN$, from \eqref{eq:hnpi-hnpi},
\begin{equation}\label{eq:cond-iii-bound-A}
        \|h_{n,p,i}\|_{\mfh^{\otimes p}}^2
    \le
    \frac{B_{\mfc,p}}{p!}, \quad  p \ge q, \quad \|h_{n,p,i}\otimes_r h_{n,p,i}\|_{\mfh^{\otimes 2(p-r)}}^2
    \le
    \mathcal A_{n,p,r}^{(i)} , p \ge 2, \;  r = 1, \dots, p -1.
\end{equation}
Therefore,
\begin{align}
    \gamma_{n,i,j}^{(p)}
    &\doteq
    \frac{1}{2}
    \sum_{\chi=1}^{p-1}
    c_p(\chi)^2
    \left(
    \|h_{n,p,i}\otimes_{p-\chi}h_{n,p,i}\|_{\mfh^{\otimes 2\chi}}^2
    +
    \|h_{n,p,j}\otimes_{p-\chi}h_{n,p,j}\|_{\mfh^{\otimes 2\chi}}^2
    \right)
    \nonumber \\
    &\le
    \frac{1}{2}
    \sum_{\chi=1}^{p-1}
    c_p(\chi)^2
    \left(
    \mathcal A_{n,p,p-\chi}^{(i)}
    +
    \mathcal A_{n,p,p-\chi}^{(j)}
    \right)
    =
    \widetilde{\gamma}_{n,i,j}^{(p)} .
\end{align}
Moreover, from \eqref{eq:cond-iii-bound-A}
\begin{equation}\label{eq:remainder-terms-def-quant-7}
\begin{aligned}
    \gamma_{n,i,j}^{(l_1,l_2)}
    &\doteq
    c(l_1,l_2)
    \|h_{n,l_1,i}\|_{\mfh^{\otimes l_1}}^2
    \|h_{n,l_2,j}\otimes_{l_2-l_1}h_{n,l_2,j}\|_{\mfh^{\otimes 2l_1}}
    \mathds{1}_{\{l_1<l_2\}}
    \\
    &\quad+
    c(l_2,l_1)
    \|h_{n,l_2,j}\|_{\mfh^{\otimes l_2}}^2
    \|h_{n,l_1,i}\otimes_{l_1-l_2}h_{n,l_1,i}\|_{\mfh^{\otimes 2l_2}}
    \mathds{1}_{\{l_2<l_1\}}
    \\
    &\quad+
    \sum_{\chi=1}^{l_1\wedge l_2-1}
    c(l_1\wedge l_2,l_1\vee l_2,\chi)
    \left(
    \|h_{n,l_1,i}\otimes_{l_1-\chi}h_{n,l_1,i}\|_{\mfh^{\otimes 2\chi}}^2
    +
    \|h_{n,l_2,j}\otimes_{l_2-\chi}h_{n,l_2,j}\|_{\mfh^{\otimes 2\chi}}^2
    \right)
    \\
        &\le     c(l_1,l_2)
    \frac{B_{\mfc,l_1}}{l_1!}
    \left(\mathcal A_{n,l_2,l_2-l_1}^{(j)}\right)^{1/2}
    \mathds{1}_{\{l_1<l_2\}}
    +
    c(l_2,l_1)
    \frac{B_{\mfc,l_2}}{l_2!}
    \left(\mathcal A_{n,l_1,l_1-l_2}^{(i)}\right)^{1/2}
    \mathds{1}_{\{l_2<l_1\}} 
    \\
    &\quad+\sum_{\chi=1}^{l_1\wedge l_2-1}
    c(l_1\wedge l_2,l_1\vee l_2,\chi)
    \left(
    \mathcal A_{n,l_1,l_1-\chi}^{(i)}
    +
    \mathcal A_{n,l_2,l_2-\chi}^{(j)}
    \right) \\
    &\le
    \widetilde{\gamma}_{n,i,j}^{(l_1,l_2)} .
\end{aligned}
\end{equation}
As a result, it follows that
\begin{equation}
\begin{split}
    \clr_{3,n,m,M}
    &\doteq
    \sqrt{m}\,M
    \sqrt{
    \sum_{i,j=1}^m
    \left(
    \sum_{p=2}^M \gamma_{n,i,j}^{(p)}
    +
    \sum_{\substack{l_1,l_2=1\\l_1\neq l_2}}^M
    \gamma_{n,i,j}^{(l_1,l_2)}
    \right)
    } \\
    &\le
    \sqrt{m}\,M
    \sqrt{
    \sum_{i,j=1}^m
    \left(
    \sum_{p=2}^M \widetilde{\gamma}_{n,i,j}^{(p)}
    +
    \sum_{\substack{l_1,l_2=1\\l_1\neq l_2}}^M
    \widetilde{\gamma}_{n,i,j}^{(l_1,l_2)}
    \right)
    }
    =
    \widetilde{\clr}_{3,n,m,M}.
\end{split}
\end{equation}
Next, by using stationarity and that $h_{n,r}=0$ for $r<q$, 
\begin{equation} \label{eq:45667}
\begin{split}
    \clr_{4,n,m,M}
    &\doteq
    \frac{M+2}{2}
    \sum_{r=q}^M
    \sum_{j=m+1}^{\infty}
    \langle \clt_{\cli_r(h_{n,r})}v_j,v_j\rangle_{\clh_2}
    \\
    &=  \frac{M+2}{2}
    \sum_{r=q}^M
    \sum_{j=m+1}^{\infty}
    r!\|h_{n,r,j}\|_{\mfh^{\otimes r}}^2 \\
    &= \frac{M+2}{2}
    \sum_{r=q}^M
    \sum_{j=m+1}^{\infty}
    r!
    \sum_{|v|<n}
    \left(1-\frac{|v|}{n}\right)
    \left\langle
    \mathfrak b_{r,j},R(v)^{\otimes r}\mathfrak b_{r,j}
    \right\rangle_{\clh_1^{\otimes r}}   \\
    &\le \frac{M+2}{2} \sum_{r=q}^M \mathfrak{D}_{n,r,m} =
    \widetilde{\clr}_{4,n,m,M}.
\end{split}
\end{equation}

Finally, by the same tail estimate used above for $\clr_{1,M}$, applied from chaos order $q$ onward,
\begin{equation}\label{eq:explicit-trace-bound-Sn}
    \|\clt_{S_n}\|_{\mathcal S_1(\clh_2)}
    \le
    \sum_{p=q}^{\infty}
    \left(
    4K_\mfc \E\|G^p[X_1]\|_{\clh_2}^2
    +
    2M_{\mfc,p}
    \right) = \sum_{p=q}^\infty B_{\mathfrak{C},p}.
\end{equation}
Combining \eqref{eq:45667} and
\eqref{eq:explicit-trace-bound-Sn}, we get
\begin{equation}
\begin{split}
    \clr_{5,n,m,M}
    &\doteq
    \|\clt_{S_n}\|_{\mathcal S_1(\clh_2)}^{1/2}
    \left(
    (M+3)
    \sum_{r=1}^M
    \sum_{j=m+1}^{\infty}
    \langle \clt_{\cli_r(h_{n,r})}v_j,v_j\rangle_{\clh_2}
    \right)^{1/2}
    \\ &\le \left( \sum_{p=q}^\infty B_{\mathfrak{C},p} \right)^{1/2}
    \left(
    (M+3)
    \sum_{r=q}^M
    \mathfrak D_{n,r,m}
    \right)^{1/2}
    =
    \widetilde{\clr}_{5,n,m,M}.
\end{split}
\end{equation}
\end{proof}

\section{Examples and applications} \label{sec:applications}

We provide in this section some examples for possible operators $G$. Suppose that $X_k$ is a Gaussian and stationary process taking values in a Hilbert space $\clh$, with covariance (and hence nuclear) operator $Q[\cdot] \doteq \E \langle X_1, \cdot \rangle_{\clh} X_1$. Denote by $\{e_j\}_{j \in \NN}$ the basis of $\clh$.

\subsection{Sample covariance operator} \label{subsec:sample-covar}

A natural estimator for the covariance operator $Q$, which has been studied in, e.g., Section 4.1 of \cite{Bosq2000:Linear} (see also \cite{mas2006sufficient}) is given by
\begin{equation*}
\Gamma_n \in \cll(\clh,\clh), \quad 
    \Gamma_{n}[\cdot] \doteq \frac{1}{n} \sum_{k=1}^n \langle X_k, \cdot \rangle_{\clh} X_k.
\end{equation*}
We suppose that $\Gamma_{n}[\cdot]$ is a random element in $\mathcal{S}_2(\clh)$, namely that $\|\Gamma_n\|^2_{\mathcal{S}_2(\clh)} = \sum_{k=1}^\infty \| \Gamma_n[v_k]\|^2_{\clh} < \infty$. Then, $S_n$ defined in \eqref{eq:partial_sum_G} can be rewritten as {$S_n = \sqrt{n}(\Gamma_n-Q)$, with $G[x] = \langle x, \cdot \rangle_{\clh} x-Q$} , $\clh_{1} = \clh$ and $\clh_{2} = \mathcal{S}_2(\clh)$.   {The } condition $\E \| G [X_1] \|^2_{\mathcal{S}_2(\clh)} < \infty$ in Theorem \ref{thm:main}   {is automatically satisfied for Gaussian \(X_1\) with trace-class covariance \(Q\). Indeed, for the rank-one operator \(x\otimes x=\langle x,\cdot\rangle_\clh x\),
} \begin{equation} \label{eq:EGeigenvalueassumptions}
\E \|  X_1  {\otimes X_1 } \|^2_{\mathcal{S}_2(\clh)} 
=
\E  \| X  {_1\|_\clh^4
=
2\|Q\|^2_{\mathcal S_2(\clh)}} + \tr(Q)^2
 <\infty .
 \end{equation}
  Consequently \(\E\|G[X_1]\|_{\mathcal S_2(\clh)}^2<\infty\) as well.

We argue that the Hermite rank in Definition \ref{def:Hermiterank} of the map $G[x] \doteq \langle x, \cdot \rangle_{\clh} x - Q$ is equal to two.
We aim to write the sample covariance operator using the Hermite expansion \eqref{eq:Hermite_expansion}. Recall that $X_k \otimes X_k$ is an unbiased estimator, i.e., $\E [X_k \otimes X_k] = Q$. Equivalently, the map \(G[x]=x\otimes x-Q\) is centered. Then, we can calculate explicitly the Hermite coefficients for $r,s \in \NN, \bfl \in \cll$ as follows:
\begin{align}
c_{r,s,\boldsymbol{l}} 
&\doteq \frac{1}{\prod_{j=1}^\infty l_j!}
\langle G[X_1] - \E G[X_1], H_{\bm{l}}(X_1) \otimes (\langle e_{r}, \cdot \rangle_{\clh} e_s) \rangle_{L^{2}(\Omega:\RR) \otimes \mathcal{S}_2(\clh)}
\nonumber
\\&= \frac{1}{\prod_{j=1}^\infty l_j!}
 \left\langle \langle X_1, e_r \rangle_{\clh} \langle X_1, e_s \rangle_\clh - \langle Q[e_r], e_s \rangle_\clh, \prod_{j=1}^\infty H_{l_j}(W_{e_j}(X_1) ) \right\rangle_{L^2(\Om:\RR)}
\label{eq:Hcof_cov_op_1}
\\&= \frac{1}{\prod_{j=1}^\infty l_j!} \left[
\E \left(
\langle X_1, e_r \rangle_{\clh} \langle X_1 , e_s \rangle_{\clh}
\prod_{j=1}^\infty H_{l_j} (W_{u_j}(X_1) )
\right) - \langle Q[e_r], e_s \rangle_{\clh} \E \left( 
\prod_{j=1}^\infty H_{l_j}(W_{e_j}(X_1) ) \right) \right]
\label{eq:Hcof_cov_op_2}
\\&= 
\frac{\lambda_r^{\frac{1}{2}} \lambda_s^{\frac{1}{2}}}{\prod_{j=1}^\infty l_j!}
\E \left[ H_1(W_{e_r}(X_1) )  H_1(W_{e_s}(X_1))
\prod_{j=1}^\infty H_{l_j}(W_{e_j}(X_1) ) \right],
\label{eq:Hcof_cov_op_3}
\end{align}
where \eqref{eq:Hcof_cov_op_1} follows from the definition of $G$ and by taking the inner product with respect to $\mathcal{S}_2(\clh)$, \eqref{eq:Hcof_cov_op_3} follows upon noticing that the second summand in \eqref{eq:Hcof_cov_op_2} is zero and by \eqref{eq:white-noise-def}. Note that, for $r \neq s$, 
\begin{equation*}
\begin{split}
 c_{r,s,\boldsymbol{l}} &=   
 \begin{cases}
    \frac{\lambda_r^{\frac{1}{2}} \lambda_s^{\frac{1}{2}} }{\prod_{j=1}^\infty l_j!}
\E \left[ (H_1(W_{e_r}(X_1)))^2   (H_1(W_{e_s}(X_1)))^2\right] & \text{if } l_r = l_s = 1, \bfl \in \cll_2, \\
0 & \text{otherwise},
\end{cases} \\
&= \begin{cases}
    \lambda_r^{\frac{1}{2}} \lambda_s^{\frac{1}{2}} & \text{if } l_r = l_s = 1, \bfl \in \cll_2, \\
0 & \text{otherwise},
\end{cases}
\end{split}
\end{equation*}
with $\cll_2 = \{\boldsymbol{l} \in \NN_0^\infty: \sum_{k=1}^\infty l_k = 2 \}$, while for $r = s$
\begin{equation*}
\begin{split}
c_{r,r,\boldsymbol{l}} &=
\begin{cases}
    \frac{\lambda_r}{\prod_{j=1}^\infty l_j!}
\E \left[ (H_2(W_{u_r}(X_1)) + 1)   H_2(W_{u_r}(X_1))
\right] & \text{if } l_r = 2, \bfl \in \cll_2, \\
0 & \text{otherwise},
\end{cases} \\
&= \begin{cases}
    \lambda_r  & \text{if } l_r =  2, \bfl \in \cll_2, \\
0 & \text{otherwise}.
\end{cases}
\end{split}
\end{equation*}
Altogether, this says that
\begin{equation*}
    c_{r,s,\bfl} = 
    \begin{cases}
    \lambda_r^{\frac{1}{2}} \lambda_s^{\frac{1}{2}} & \text{if } l_r = l_s = 1, \bfl \in \cll_2, \\
    \lambda_r  & \text{if } l_r =  2, \bfl \in \cll_2, \\
0 & \text{otherwise}.
\end{cases}
\end{equation*}

We now compute \(C_2\). By the definition of \(C_p\), 
\begin{equation}
    C_p = 0 , \quad \text{for }p \neq 2, \quad C_2 ( e_i \otimes e_j) = \begin{cases}
        \lambda_a e_i \otimes e_i & \text{if } i = j , \\
        \frac{\sqrt{\lambda_i \lambda_j}}{2} & \text{if } i \neq j.
    \end{cases}
\end{equation}
Hence
\begin{equation*}
    Q_2^{-1/2}C_2 (e_i \otimes e_j)= \begin{cases}
        e_i\otimes e_j & \text{if }i = j, \\
        \frac{1}{2} \left( e_i \otimes e_j + e_j \otimes e_i \right) & \text{if } i \neq j,
    \end{cases}
\end{equation*}
so that $\sum_{p = 1}^\infty p! \|Q_p^{-1/2}C_p\|^2_{\op} = 2 \| Q_2^{-1/2}C_2 \|^2_{\op} \le 2$.
Therefore condition \eqref{thm:ass-2-2:different} holds for the sample covariance operator for any trace-class covariance operator $Q$.

Then, we can infer that for every Gaussian process $\{X_k\}_{k \in \ZZ}$ satisfying condition \eqref{thm:ass-2.1-eq-Gamma} or \eqref{thm:ass-2.0-eq-Gamma} for $q=2$, its sample covariance operator satisfies
\begin{equation} \label{eq:conv_cov_operator}
    \sqrt{n} \left( \Gamma_n - Q \right)
    \xrightarrow{d} Z,
\end{equation}
where $Z$ is a $\mathcal{S}_2(\clh)$-valued centered Gaussian element with covariance operator given by \eqref{eq:def-T}, with $\clh_2 = \mathcal{S}_2(\clh)$ and $G[x] = \langle x, \cdot \rangle_{\clh}  x - Q[\cdot]$.

Analogous (but simpler) calculations can be done for the sample mean $\frac{1}{n} \sum_{k=1}^n X_k$ (here $G = \operatorname{Id}_\clh$ and $\clh_1 = \clh_2 = \clh$), giving rise to the generalized Hermite coefficients $c_{i,\bfl} = \lambda_i^{\frac{1}{2}} \delta_{l_i = 1} \delta_{\bfl \in \cll_1}$, for $i \in \NN$. This says that the generalized Hermite rank is $q=1$, and so one can apply Theorem \ref{thm:main} whenever $\{X_k\}_{k \in \ZZ}$ satisfies the assumptions of Theorem \ref{thm:main} for $q = 1$.

\subsection{Eigenvalue estimation}

Suppose that the eigenvectors $\{v_j\}_{j \in \NN}$ of the covariance operator $Q$ are known and consider the problem of estimating their corresponding positive eigenvalues $\{\lambda_j\}_{j \in \NN}$. The following consistent estimators $\hat \lambda_{jn}$ for $\lambda_j$ were considered in Section 4.2 of \cite{Bosq2000:Linear}, 
\begin{equation*}
    \hat \lambda_{jn} = \frac{1}{n} \sum_{k=1}^n \langle X_k, v_j \rangle_{\clh}^2, \quad \E \hat \lambda_{jn} = \lambda_j, \quad j,n \in \NN,
\end{equation*}
giving rise to the setting of Theorem \ref{thm:main} with $\clh_1 = \clh, \clh_2 = \RR, S_n = \sqrt{n}(\hat \lambda_{jn} - \lambda_j)$ and $G_j[x] = \langle x,v_j \rangle_{\clh}^2-\lambda_j$. Fix some $j \in \NN$ and note that $\E |G_j[X_1]|^2 = \EE \left( \langle X_1,v_j \rangle^2_{\clh} - \lambda_j \right)^2  = 2 \lambda_j^2< \infty$. Moreover, by arguing as in the previous section, we have that the coefficients of $G_j$ are $c_{\bfl} = \lambda_j \delta_{l_j 2}$, and so $G_j$ has Hermite rank $ q=2$. Theorem \ref{thm:main} implies that if condition \eqref{thm:ass-2.1-eq-Gamma} is met with $q=2$, then for all $j\in \NN$,
\begin{equation*}
   \sqrt{n}( \hat \lambda_{jn} - \lambda_j ) \xrightarrow{d} \mathcal{N}(0,\sigma^2_j),
\end{equation*}
where $\sigma^2_j \doteq  \E G_j^2[X_1] + 2 \sum_{k=1}^\infty \E G_j[X_1] G_j[X_{k+1}]$. When $\{X_k\}$ is a Gaussian ARH(1) process, this result recovers Theorem 4.10 in \cite{Bosq2000:Linear}.

In fact, we can strengthen this result by considering the simultaneous estimation of all eigenvalues. Since  $\lambda \doteq  (\lambda_1,\lambda_2,\dots) \in \ell^1(\NN)$, it follows that $\lambda  \in \ell^2(\NN), \;  \hat \lambda_n = (\hat \lambda_{1n},\hat \lambda_{2n},\dots) \in \ell^2(\NN)$, and $\{\psi_j\}$ denote the canonical basis of $\ell^2(\NN)$. Then, $G[x] \in \ell^2(\NN)$, where $G_j[x] = \langle x,v_j \rangle_{\clh}^2-\lambda_j$ and it holds that $\E\| G[X_1] \|^2_{\ell^2(\NN)} < \infty$. Here, $G$ has Hermite coefficients $c_{j,\bfl} = \lambda_j \delta_{l_j 2}$ and Hermite rank $q=2$. 

Moreover, by arguing as in the sample covariance operator, we can see that condition \eqref{thm:ass-2-2:different} holds for the eigenvalue estimator as well. Then, by Theorem \ref{thm:main},
\begin{equation*}
    \sqrt{n} (\hat \lambda_n - \lambda) \xrightarrow{d} S,
\end{equation*}
where $S$ is a Gaussian element of $\ell^2(\NN)$ with covariance operator $\covop_S$, such that 
\begin{equation*}
    \langle \covop_S(\psi_i), \psi_j \rangle_{\ell^2(\NN)} = \sum_{v \in \ZZ} \EE \left( G_i [X_0]  G_j [X_v] \right) .
\end{equation*}

\subsection{Shallow neural operators with Gaussian initializations}

In a recent work, \cite{kovachki2023neural} introduced a framework for learning operators, termed \textit{neural operators}. Numerous quantitative CLTs have been investigated in the context of neural networks, e.g., \cite{favaro2023quantitative,kovachki2023neural,bordino2023non}. In this section, we study the limiting distribution of a single layer neural operator with random initializations of the parameters, as the width of the network goes to infinity. The single layer update for these approximation schemes takes the form of some output $u \in L^2(D:\RR^m)$, where $D$ is a bounded domain $D \subset \RR^l$; more precisely, in a simplified form, 
\begin{equation*}
    u(x) = \sigma\left(\int_{D} \kappa(x,y)v(y)d\mu(y) \right), \quad x \in D,
\end{equation*}
where $\kappa: D \times D \to \RR^{m\times m}$ is a suitable kernel, $v \in  L^2(D:\RR^m)$ is the input function, $\mu$ is a suitable measure, and $\sigma:\RR^m \to \RR^m$ is an \textit{activation function}. Here, we take $\mu$ to be the Lebesgue measure and write $\clh_D \doteq L^2(D:\RR^m)$.
We assume that $\sigma$ is Lipschitz and denote by $N_\sigma:\clh_D\to\clh_D$ the map
\begin{equation*}
    N_\sigma(f)(x) \doteq \sigma(f(x)), \quad f\in \clh_D,\ x\in D.
\end{equation*}
There are several options for kernels, but here we select the so-called \textit{low-rank neural operators}; see Section 4.2 in \cite{kovachki2023neural}.
Low-rank neural operators are formally defined through
\begin{equation*}
    \kappa(x,y) = \sum_{j=1}^r \Phi^{(j)}(x) (\Psi^{(j)}(y))', \quad \text{implying}   \int_D \kappa(\cdot,y) v(y) d\mu(y) = \sum_{j=1}^r \langle \Psi^{(j)}, v \rangle_{\clh_D }\Phi^{(j)}(\cdot),
\end{equation*}
where $r \in \NN$ is a constant termed the \textit{rank} of the kernel and, for $i = 1,\dots,r$, $\Phi^{(i)},\Psi^{(i)}$ are $\clh_D$-valued centered Gaussian  random variables. Recall the Cartesian inner product $\langle x,y \rangle_{\clh_D^{2r}} = \sum_{i=1}^{2r} \langle x_i, y_i \rangle_{\clh_D}$, where $x_i$ denotes the $i$-th marginal of $x \in \clh_D^{2r}$. Let $\{\Phi_k^{(i)},\Psi_k^{(i)}\}_{i = 1,\dots,r,k \in \ZZ}$ be a centered Gaussian sequence such that, letting for $k \in \ZZ$
\begin{equation} \label{def:Xk-neural}
    X_k \doteq \left(\Phi_k^{(1)}, \dots, \Phi_k^{(r)}, \Psi_k^{(1)}, \dots, \Psi_k^{(r)}\right) \in \clh_D^{2r} \doteq \clh_1,
\end{equation}
the sequence $\{X_k\}_{k \in \ZZ}$ is stationary with nondegenerate covariance operator $Q$. 

Let $\nu$ be a Borel probability measure on $\clh_D$ satisfying
\begin{equation}
    \int_{\clh_D}\|v\|_{\clh_D}^2 \nu(dv)<\infty,
\end{equation}
and set $\clh_2 \doteq L^2(\clh_D,\nu:\clh_D)$.
For $x=(\phi_1,\dots,\phi_r,\psi_1,\dots,\psi_r)\in \clh_1$, define the finite-rank operator
\begin{equation} \label{eq:K_x-neural}
    K_x v \doteq \sum_{j=1}^r \langle \psi_j, v \rangle_{\clh_D}\phi_j, \quad v\in \clh_D.
\end{equation}
For $X_k$ as in \eqref{def:Xk-neural}, define 
\begin{equation*}
    G_\sigma[X_k][v] \doteq  N_\sigma \left( \sum_{j=1}^r \langle \Psi_k^{(j)}, v \rangle_{\clh_D} \Phi_k^{(j)}   \right), \quad v\in \clh_D.
\end{equation*}
Thus, $G_\sigma[X_k]$ is viewed as an element of $\clh_2=L^2(\clh_D,\nu:\clh_D)$. It is left to show that $G$ satisfies Assumption \ref{thm:ass-2.2}. We have, $G_\sigma \in L^2(\clh_1,\gamma_Q:\clh_2)$ since, with further explanations given below, for a constant $C_\sigma>0$ depending only on $\sigma$ and $D$,
\begin{align}
\E\|G_\sigma[X_1]\|_{\clh_2}^2
    &\leq C_\sigma
    \left( 1+
    \int_{\clh_D}\|v\|_{\clh_D}^2\nu(dv)\,
    \E\|K_{X_1}\|_{\mathcal S_2(\clh_D)}^2
    \right)
    \label{al:neural-1}
    \\
    &\leq C_\sigma
    \left(
    1+
    \int_{\clh_D}\|v\|_{\clh_D}^2\nu(dv)\,
    \E
        \left[
            \left(
                \sum_{j=1}^r
                \|\Phi_1^{(j)}\|_{\clh_D}
                \|\Psi_1^{(j)}\|_{\clh_D}
            \right)^2
        \right]
    \right)
    \label{al:neural-2}
    \\&\leq C_\sigma
    \left(
        1+
        \int_{\clh_D}\|v\|_{\clh_D}^2\nu(dv)\,
        \E
        \left[ 
        \sum_{j_1,j_2=1}^r
        \|\Phi_1^{(j_1)}\|^2_{\clh_D}
        \|\Psi_1^{(j_1)}\|^2_{\clh_D}
        \right]
    \right)
    \label{al:neural-3}
    \\&\leq C_\sigma
    \left(
        1+
        \int_{\clh_D}\|v\|_{\clh_D}^2\nu(dv)\, 
        \sum_{j_1,j_2=1}^r
        \left(
        \E
        \left[
        \|\Phi_1^{(j_1)}\|^4_{\clh_D}
        \right]
        \E
        \left[
        \|\Psi_1^{(j_1)}\|^4_{\clh_D}
        \right]
        \right)^{\frac{1}{2}}
    \right)
    \label{al:neural-4}
    <\infty.
\end{align}
Recall the finite-rank operator $K_x$ from \eqref{eq:K_x-neural}. Then, the inequality \eqref{al:neural-1} can be inferred, since, for
$x=(\phi_1,\dots,\phi_r,\psi_1,\dots,\psi_r)\in\clh_1$,
\begin{align}
    \|G_\sigma[x]\|_{\clh_2}^2
    &= \int_{\clh_D} \|N_\sigma(K_xv)\|_{\clh_D}^2 \nu(dv) \\
    &\leq \int_{\clh_D}
    \left(2a_\sigma^2 |D| + 2b_\sigma^2 \|K_xv\|_{\clh_D}^2\right)
    \nu(dv) 
    \label{al:neural-5}
    \\
    &\leq 2a_\sigma^2 |D|
    + 2b_\sigma^2 \|K_x\|_{\mathrm{op}(\clh_D)}^2
    \int_{\clh_D}\|v\|_{\clh_D}^2\nu(dv) \\
    &\leq 2a_\sigma^2 |D|
    + 2b_\sigma^2 \|K_x\|_{\mathcal S_2(\clh_D)}^2
    \int_{\clh_D}\|v\|_{\clh_D}^2\nu(dv) \\
    &\leq C_\sigma
    \left(
        1+\|K_x\|_{\mathcal S_2(\clh_D)}^2
        \int_{\clh_D}\|v\|_{\clh_D}^2\nu(dv)
    \right),
\end{align}
where \eqref{al:neural-5} follows since 
$\sigma$ is Lipschitz and $|D|$ denotes the Lebesgue measure of $D$. To be more precise, there exist constants $a_\sigma,b_\sigma>0$
such that
\begin{equation}
    \|\sigma(z)\|_{\RR^m} \leq a_\sigma+b_\sigma \|z\|_{\RR^m},
    \quad z\in \RR^m.
\end{equation}
Since $D$ is bounded, this also implies that, for every $f\in\clh_D$,
\begin{align*}
    \|N_\sigma(f)\|_{\clh_D}^2
    &= \int_D \|\sigma(f(x)) \|_{\RR^m}^2 dx 
    \leq \int_D \left(a_\sigma+b_\sigma \|f(x)\|_{\RR^m}\right)^2 dx \leq 2a_\sigma^2 |D| + 2b_\sigma^2 \|f\|_{\clh_D}^2.
\end{align*}

Inequality \eqref{al:neural-2} follows by the triangle inequality in $\mathcal S_2(\clh_D)$,
\begin{align*}
    \|K_x\|_{\mathcal S_2(\clh_D)}
    \leq \sum_{j=1}^r \|\phi_j\otimes\psi_j\|_{\mathcal S_2(\clh_D)} = \sum_{j=1}^r \|\phi_j\|_{\clh_D}\|\psi_j\|_{\clh_D}.
\end{align*}
Finally, \eqref{al:neural-3} and \eqref{al:neural-4} are applications of H\'older's inequality.
Finiteness in \eqref{al:neural-4} follows since $\int_{\clh}\|v\|_{\clh_D}^2\nu(dv)<\infty$ and
$X_1$ is Gaussian in $\clh_1$ and therefore admits finite fourth moments. Thus
$G_\sigma\in L^2(\clh_1,\gamma_Q:\clh_2)$.

For simplicity, suppose that $\E G_\sigma[X_1] = 0$. We suppose further that $G_\sigma$ has Hermite rank $q_\sigma$. Then, $G_\sigma\in L^2(\clh_1,\gamma_Q:\clh_2)$ together with
\[
    \sum_{v\in\ZZ}\|R(v)\|_{\mathrm{op}(\clh_1)}^{q_\sigma}<\infty,
\]
and assuming that the covariance operator $T_{Z_\sigma}$ in \eqref{eq:def-T} is nondegenerate, we have, for the one-layer, $n$-width neural operator, 
\begin{equation*}
    S_n = \frac{1}{\sqrt{n}} \sum_{k=1}^n G_{\sigma}[X_k] \xrightarrow{d} Z_\sigma,
\end{equation*}
where $Z_\sigma$ is an $\clh_2$-valued Gaussian random variable with covariance operator given in \eqref{eq:def-T}.

\section{Auxiliary results} \label{app:aux}

\begin{lemma} \label{le:ana_to_Le53Bou20}
Suppose the assumptions of Theorem \ref{th:cont_case} hold.
Then, for each fixed $p \ge q$
\begin{equation*}
\| 
  \clt_{\mathcal{I}_p(  h_{n,p, \cdot} )}
- 
\clt_{B} \otimes \clt_{\mathcal{I}_p(h_{n,p})} \|_{\mathcal{S}_1(L^2([0,1]) \otimes \clh_2)} \to 0, \quad \text{as } n \to \infty.
\end{equation*}
\end{lemma}

\begin{proof}
Note first that
\begin{align}
    &  \clt_{\mathcal{I}_p(  h_{n,p, \cdot} )}
    - 
    \clt_{B} \otimes \clt_{\mathcal{I}_p(h_{n,p})}
    \\&=
    \sum_{i,j =1}^\infty
    p! \frac{1}{n} \sum_{k_1, k_2=1}^n 
    \left(\1_{\big[\frac{k_1}{n},1\big]} \otimes \1_{\big[\frac{k_2}{n},1\big]} \right)
    \sum_{\bfr,\bfs \in \NN^p} b_{i,\bfr} b_{j,\bfs} \prod_{k=1}^p \rho_{r_k s_k}(k_1 - k_2) (v_i \otimes v_j)
    \nonumber
    \\&\hspace{1cm}-
    \clt_{B} \otimes \sum_{i,j =1}^\infty
    p! \frac{1}{n} \sum_{k_1, k_2=1}^n \sum_{\bfr,\bfs \in \NN^p} b_{i,\bfr} b_{j,\bfs} \prod_{k=1}^p \rho_{r_k s_k}(k_1 - k_2) (v_i \otimes v_j).
\label{eq:twosummands-condition-1-seq}
\end{align}
For each $k\in\{-(n-1),\dots,n-1\}$, the number of pairs
$(k_1,k_2)\in\{1,\dots,n\}^2$ such that $k_1-k_2=k$ is $n-|k|$.
Hence, grouping the sum in \eqref{eq:twosummands-condition-1-seq} according to the value of $k_1-k_2$, and recalling \eqref{eq:rewrite-as-operators}, we can recast the operator difference as
\begin{align}
  \clt_{\mathcal{I}_p(  h_{n,p, \cdot} )}
    - 
    \clt_{B} \otimes \clt_{\mathcal{I}_p(h_{n,p})}
&=
p!\sum_{k=0}^{n-1}
\left(
\frac{1}{n}\sum_{j=1}^{n-k}
\1_{[\frac{j+k}{n},1]}\otimes \1_{[\frac{j}{n},1]}
-
\left(1-\frac{k}{n}\right)\clt_B
\right)\otimes B_k
\nonumber
\\
&\qquad
+
p!\sum_{k=1}^{n-1}
\left(
\frac{1}{n}\sum_{j=1}^{n-k}
\1_{[\frac{j}{n},1]}\otimes \1_{[\frac{j+k}{n},1]}
-
\left(1-\frac{k}{n}\right)\clt_B
\right)\otimes B_{-k}
\label{eq:Tnp-grouped-lags}
\\
&=
p!\sum_{|k|<n} \mathcal{A}_{kn}\otimes B_k,
\end{align}
where $\clt_B$ is the covariance operator given in \eqref{eq:BrownianMotion_int} and the operators $\mathcal{A}_{kn}, B_k$ are defined by
\begin{equation} \label{eq:def-Bk}
\begin{split}
        B_k
&\doteq
C^*_p R(k)^{\otimes p}C_p, \\
\mathcal{A}_{kn} &\doteq \begin{cases}
    \frac{1}{n}\sum_{j=1}^{n-k}
\1_{[\frac{j+k}{n},1]}\otimes \1_{[\frac{j}{n},1]}
-
\left(1-\frac{k}{n}\right)\clt_B, & k=0,\dots,n-1 \\
\left( \frac{1}{n}\sum_{j=1}^{n-|k|}
\1_{[\frac{j+|k|}{n},1]}\otimes \1_{[\frac{j}{n},1]}
-
\left(1-\frac{|k|}{n}\right)\clt_B \right)^*, &  k=-(n-1),\dots,-1.
\end{cases}
\end{split}
\end{equation}

Therefore, by the triangle inequality,
\begin{align}
\|  \clt_{\mathcal{I}_p(  h_{n,p, \cdot} )}
    - 
    \clt_{B} \otimes \clt_{\mathcal{I}_p(h_{n,p})}\|_{\mathcal{S}_1(L^2([0,1])\otimes \mathcal{H}_2)}
&\le
p!\sum_{|k|<n}
\|\mathcal{A}_{kn}\otimes B_k\|_{\mathcal{S}_1(L^2([0,1])\otimes \mathcal{H}_2)}
\nonumber
\\
&=
p!\sum_{|k|<n}
\|\mathcal{A}_{kn}\|_{\mathcal{S}_1(L^2([0,1]))}
\|B_k\|_{\mathcal{S}_1(\mathcal{H}_2)}
\label{al:Tnkbound-1}
\\
&\le
p!\sum_{|k|<n}
\left(
\frac{1}{\sqrt{n}}
+
\sqrt{\frac{|k|}{2n}}
\right)
\|B_k\|_{\mathcal{S}_1(\mathcal{H}_2)}.
\label{al:Tnkbound-2}
\end{align}
Here \eqref{al:Tnkbound-1} follows from the tensor product formula for trace norms presented above \eqref{eq:Q-decomp-hs}, and the inequality \eqref{al:Tnkbound-2} follows by Lemma
\ref{le:Tnp-trace-class-bound}, together with
$\|\mathcal{A}_{kn}\|_{\mathcal{S}_1(L^2([0,1]))}
=
\|\mathcal{A}^*_{|k|n}\|_{\mathcal{S}_1(L^2([0,1]))}$ for $k<0$.

It remains to prove the convergence to $0$ under Assumptions
\ref{thm:ass-2.2} or \ref{thm:ass-2.1}.
Then, for $\ell \ge K_\mfc$, where $\mfc = 1,2$ is as in \eqref{eq:de:theta_theta-1} corresponding to Assumptions
\ref{thm:ass-2.2} and \ref{thm:ass-2.1} respectively,
\begin{align*}
\|  \clt_{\mathcal{I}_p(  h_{n,p, \cdot} )}
    - 
    \clt_{B} \otimes \clt_{\mathcal{I}_p(h_{n,p})}\|_{\mathcal{S}_1(L^2([0,1])\otimes \mathcal{H}_2)}
&\le
p!\sum_{|k|\le \ell}
\left(
\frac{1}{\sqrt{n}}
+
\sqrt{\frac{|k|}{2n}}
\right)
\bigl\|
C^*_p R(k)^{\otimes p}C_p
\bigr\|_{\mathcal{S}_1(\mathcal{H}_2)}
\\
&\qquad
+
p!\sum_{\ell < |k| < n}
\left(
\frac{1}{\sqrt{n}}
+
\sqrt{\frac{|k|}{2n}}
\right)
\bigl\|
C^*_p R(k)^{\otimes p}C_p
\bigr\|_{\mathcal{S}_1(\mathcal{H}_2)}
\\
&\le
p!\left(
\frac{1}{\sqrt{n}}
+
\sqrt{\frac{\ell}{2n}}
\right)
\sum_{|k|\le \ell}
\bigl\|
C^*_p R(k)^{\otimes p}C_p
\bigr\|_{\mathcal{S}_1(\mathcal{H}_2)}
\\
&\qquad
+
p!\left(
1+\frac{1}{\sqrt{2}}
\right)
\sum_{|k|>\ell}
\bigl\|
C^*_p R(k)^{\otimes p}C_p
\bigr\|_{\mathcal{S}_1(\mathcal{H}_2)},
\end{align*}
where the last line follows since $\frac{1}{\sqrt{n}}+\sqrt{\frac{|k|}{2n}}
\le
1+\frac{1}{\sqrt{2}}$.
Letting first $n\to\infty$ and then $\ell \to\infty$, and using
\eqref{eq:R1argument7}, \eqref{eq:R2argument7}, and \eqref{eq:R2argument10},
together with Assumptions \ref{thm:ass-2.2} or \ref{thm:ass-2.1}, respectively, we conclude that $\|  \clt_{\mathcal{I}_p(  h_{n,p, \cdot} )}
    - 
    \clt_{B} \otimes \clt_{\mathcal{I}_p(h_{n,p})}\|_{\mathcal{S}_1(L^2([0,1])\otimes \mathcal{H}_2)}
\to 0$ as $n \to \infty$.
\end{proof}

The following lemma was used in the proof of Lemma \ref{le:ana_to_Le53Bou20}.

\begin{lemma} \label{le:Tnp-trace-class-bound}
Recall $\mathcal{A}_{kn}$ from \eqref{eq:def-Bk}.
Then, for every $|k|<n$,
\begin{equation} \label{eq:Akn-trace-bound}
\|\mathcal{A}_{kn}\|_{\mathcal{S}_1(L^2([0,1]))}
\le
\frac{1}{\sqrt n}
+
\sqrt{\frac{|k|}{2n}}.
\end{equation}
\end{lemma}

\begin{proof}
We estimate $\|\mathcal{A}_{kn}\|_{\mathcal{S}_1(L^2([0,1]))}$ for $k=0,\dots,n-1$; the same bound for negative $k$ then follows since the trace norm is invariant under taking adjoints.

Note that the covariance operator $\mathcal{T}_B$ in \eqref{eq:BrownianMotion_int}
has kernel $\kappa(s,t)=s\wedge t$ and admits the representation
\begin{equation} \label{eq:C0-rank-one-integral}
\mathcal{T}_B
=
\int_0^1 g_u\otimes g_u \,du,
\qquad g_u \doteq \mathbf{1}_{[u,1]} .
\end{equation}

For $a\in[0,1]$, define the right-shift operator $S_a$ on $L^2([0,1])$, for $u\in[0,1-a]$, by
\begin{equation} \label{eq:shift-on-gu}
(S_af)(s)
\doteq
\1_{[a,1]}(s)f(s-a), \quad  \text{such that } \;S_ag_u = g_{u+a}.
\end{equation}
Then $S_a$ is a contraction on $L^2([0,1])$. 

Next, define the empirical counterpart of \eqref{eq:C0-rank-one-integral} as
\begin{equation} \label{eq:def-Dn}
\mathcal{D}_n
\doteq
\frac{1}{n}\sum_{j=1}^n g_{j/n}\otimes g_{j/n}, \quad \text{with kernel } \mathcal{D}_n(s,t) \doteq
\frac{1}{n}\sum_{j=1}^n \1_{\{j/n\le s\wedge t\}} = \frac{\lfloor n(s\wedge t)\rfloor}{n}.
\end{equation}
Fix $k\in\{0,\dots,n-1\}$ and write $a=\frac{k}{n}$. By \eqref{eq:def-Bk}, \eqref{eq:shift-on-gu}, and \eqref{eq:def-Dn},
\begin{align}
\mathcal{A}_{kn}
&=
\frac{1}{n}\sum_{j=1}^{n-k} g_{(j+k)/n}\otimes g_{j/n}
-
(1-a)\mathcal{T}_B
\nonumber
\\
&=
S_a\left(
\frac{1}{n}\sum_{j=1}^{n-k} g_{j/n}\otimes g_{j/n}
\right)
-
(1-a)\mathcal{T}_B
\label{eq:Akn-first-rewrite}
\\
&=
S_a \mathcal{D}_n
-
(1-a)\mathcal{T}_B
\label{eq:Akn-first-rewrite-2}
\\
&=
S_a(\mathcal{D}_n-\mathcal{T}_B)
+
\bigl(S_a-(1-a)I\bigr)\mathcal{T}_B,
\label{eq:Akn-decomp}
\end{align}
where \eqref{eq:Akn-first-rewrite-2} follows by \eqref{eq:def-Dn}, since $S_a g_{j/n}=0$ for $j>n-k$.

We estimate the two terms in \eqref{eq:Akn-decomp} separately. For the first summand, note that
\begin{align}
\|S_a(\mathcal{D}_n-\mathcal{T}_B)\|_{\mathcal{S}_1(L^2([0,1]))}
&\leq
\| S_a \|_{\op(L^2([0,1]))}
\|\mathcal{D}_n-\mathcal{T}_B\|_{\mathcal{S}_1(L^2([0,1]))}
\nonumber
\\
&\le
\|\mathcal{D}_n-\mathcal{T}_B\|_{\mathcal{S}_1(L^2([0,1]))},
\label{eq:Akn-decomp-1-1}
\end{align}
where \eqref{eq:Akn-decomp-1-1} follows by Theorem 18.11 (g) in \cite{conway2000course} and since $S_a$ is a contraction on $L^2([0,1])$.

We now bound $\|\mathcal{D}_n-\mathcal{T}_B\|_{\mathcal{S}_1(L^2([0,1]))}$. We can decompose $\mathcal{T}_B=\mathcal{V}\mathcal{V}^*$ (resp. $\mathcal{D}_n=\mathcal{V}_n\mathcal{V}_n^*$), where $\mathcal{V}$ and $\mathcal{V}_n$ are given by
\[
(\mathcal{V}f)(s) \doteq \int_0^s f(u)du, \quad (\mathcal{V}_n f)(s)
\doteq
\int_0^{\lfloor ns\rfloor/n} f(u)du .
\]
Above, $\mathcal{V}$ is the usual Volterra operator. Hence
\[
\mathcal{D}_n-\mathcal{T}_B
=
\mathcal{V}_n(\mathcal{V}_n^*-\mathcal{V}^*)
+
(\mathcal{V}_n-\mathcal{V})\mathcal{V}^* .
\]
Therefore, by the ideal inequality for Schatten norms,
\begin{align}
\|\mathcal{D}_n-\mathcal{T}_B\|_{\mathcal{S}_1(L^2([0,1]))}
&\le
\|\mathcal{V}_n\|_{\mathcal{S}_2(L^2([0,1]))}
\|\mathcal{V}_n-\mathcal{V}\|_{\mathcal{S}_2(L^2([0,1]))}
\nonumber
\\
&\quad+
\|\mathcal{V}_n-\mathcal{V}\|_{\mathcal{S}_2(L^2([0,1]))}
\|\mathcal{V}\|_{\mathcal{S}_2(L^2([0,1]))}
\nonumber
\\
&=
\left(
\|\mathcal{V}_n\|_{\mathcal{S}_2(L^2([0,1]))}
+
\|\mathcal{V}\|_{\mathcal{S}_2(L^2([0,1]))}
\right)
\|\mathcal{V}_n-\mathcal{V}\|_{\mathcal{S}_2(L^2([0,1]))}.
\label{eq:Dn-TB-trace-bound} \\
&\le \frac{1}{\sqrt n}. \label{eq:Dn-TB-trace-bound-final}
\end{align}
In deriving \eqref{eq:Dn-TB-trace-bound-final} from \eqref{eq:Dn-TB-trace-bound}, we have reasoned as follows. Since the kernel of $\mathcal{V}$ (resp. $\mathcal{V}_n$) is $\1_{\{u\le s\}}$ (resp. $1_{\{ u \le \lfloor n s \rfloor / n\}}$), we have
\begin{equation} \label{eq:V-HS-calculation}
\|\mathcal{V}\|_{\mathcal{S}_2(L^2([0,1]))}^2
=
\int_0^1\int_0^1 \1_{\{u\le s\}}du\,ds
=
\frac12, \quad 
\|\mathcal{V}_n\|_{\mathcal{S}_2(L^2([0,1]))}^2
=
\int_0^1 \frac{\lfloor ns\rfloor}{n}ds
=
\frac{n-1}{2n}
\le
\frac12.
\end{equation}
Moreover, the kernel of $\mathcal{V}-\mathcal{V}_n$ is $\1_{\{\lfloor ns\rfloor/n<u\le s\}}$ and therefore
\[
\|\mathcal{V}_n-\mathcal{V}\|_{\mathcal{S}_2(L^2([0,1]))}^2
=
\int_0^1
\left(s-\frac{\lfloor ns\rfloor}{n}\right)ds
=
\frac{1}{2n},
\]
obtaining \eqref{eq:Dn-TB-trace-bound-final}.

For the second summand of \eqref{eq:Akn-decomp}, using again the decomposition $\mathcal{T}_B=\mathcal{V}\mathcal{V}^*$, we have
\begin{align} 
\bigl\|\bigl(S_a-(1-a)I\bigr)\mathcal{T}_B\bigr\|_{\mathcal{S}_1(L^2([0,1]))}
&=
\bigl\|\bigl(S_a-(1-a)I\bigr)\mathcal{V}\mathcal{V}^*\bigr\|_{\mathcal{S}_1(L^2([0,1]))}
\nonumber
\\
&\le
\bigl\|\bigl(S_a-(1-a)I\bigr)\mathcal{V}\bigr\|_{\mathcal{S}_2(L^2([0,1]))}
\|\mathcal{V}\|_{\mathcal{S}_2(L^2([0,1]))}
\label{eq:shift-defect-factorization}
\\
&=
\sqrt{a}(1-a)\frac{1}{\sqrt{2}}
\le
\sqrt{\frac{a}{2}},
\label{eq:shift-defect-factorization-2}
\end{align}
where \eqref{eq:shift-defect-factorization} is due to Problem 28(c) (Section 6) in \cite{reed1972methods}. The second term in \eqref{eq:shift-defect-factorization} is again handled by \eqref{eq:V-HS-calculation}.
Moreover, for the first factor in \eqref{eq:shift-defect-factorization}, the kernel of $\bigl(S_a-(1-a)I\bigr)\mathcal{V}$ is $\1_{\{u\le s-a\}}-(1-a)\1_{\{u\le s\}}$, therefore
\begin{align}
\bigl\|\bigl(S_a-(1-a)I\bigr)\mathcal{V}\bigr\|_{\mathcal{S}_2(L^2([0,1]))}^2
&=
\int_0^1\int_0^1
\left(
\1_{\{u\le s-a\}}-(1-a)\1_{\{u\le s\}}
\right)^2
 du ds
\nonumber
\\
&=
\int_a^1
\left(
\int_0^{s-a} a^2 du
+
\int_{s-a}^{s}(1-a)^2 du
\right)ds
+
\int_0^a \int_0^s (1-a)^2 du ds
\nonumber
\\
&=
\int_a^1 \left(a^2(s-a)+a(1-a)^2\right)ds
+
(1-a)^2\int_0^a s ds
\nonumber
\\
&=
a(1-a)^2.
\label{eq:shift-defect-HS}
\end{align}

Finally, using \eqref{eq:Akn-decomp}, \eqref{eq:Akn-decomp-1-1}, \eqref{eq:Dn-TB-trace-bound-final}, and \eqref{eq:shift-defect-factorization-2}, since $a = \frac{k}{n}$,
\[
\|\mathcal{A}_{kn}\|_{\mathcal{S}_1(L^2([0,1]))}
\le
\frac{1}{\sqrt n}
+
\sqrt{\frac{k}{2n}},
\qquad k=0,\dots,n-1.
\]
The same estimate holds with $k$ replaced by $|k|$, which proves \eqref{eq:Akn-trace-bound}. The final claim follows immediately from \eqref{eq:Akn-trace-bound}.
\end{proof}

The following lemma shows the nonnegativity of the operator given in \eqref{eq:two-operators}. 

\begin{lemma}\label{lem:Positivity of the block operators}
Fix \(p\in\NN\) and \(v\in\ZZ\).  
Recall the operator $R(v):\clh_1\to\clh_1$ given in \eqref{eq:R(v)-op} and its associated block operator $\mathcal R^{(p)}(v):
\clh_1^{\otimes p}\oplus \clh_1^{\otimes p}
\to
\clh_1^{\otimes p}\oplus \clh_1^{\otimes p}$ given in \eqref{eq:two-operators}.
Then \(\mathcal R^{(p)}(v)\) is nonnegative.
\end{lemma}

\begin{proof}
By the definition of \(R(v)\) in terms of the white-noise map, for
\(h,g\in\clh_1\),
\[
\begin{aligned}
|\langle R(v)h,g\rangle_{\clh_1}|^2
&=
\left|
\E\left[W_h(X_1)W_g(X_{1+v})\right]
\right|^2 \\
&\le
\E\left[|W_h(X_1)|^2\right]
\E\left[|W_g(X_{1+v})|^2\right]  \\
&=
\|h\|_{\clh_1}^2\|g\|_{\clh_1}^2,
\end{aligned}
\]
where we used Cauchy--Schwarz and stationarity. Hence
\(\|R(v)\|_{\op(\clh_1)}\le 1\). Moreover, \(R(0)=I_{\clh_1}\), and
\(R(-v)=R(v)^*\).

Therefore
\[
    \|R(v)^{\otimes p}\|_{\op(\clh_1^{\otimes p})}
    \le
    \|R(v)\|_{\op(\clh_1)}^p
    \le 1.
\]
The block operator in \eqref{eq:two-operators} may be written as
\[
\mathcal R^{(p)}(v)
=
\begin{pmatrix}
I_{\clh_1^{\otimes p}} & R(v)^{\otimes p} \\
R(-v)^{\otimes p} & I_{\clh_1^{\otimes p}}
\end{pmatrix}.
\]
Thus, for \((x,y)\in \clh_1^{\otimes p}\oplus \clh_1^{\otimes p}\),
\[
\begin{aligned}
\big\langle \mathcal R^{(p)}(v)(x,y),(x,y)\big\rangle_{\clh_1^{\otimes p}\oplus \clh_1^{\otimes p}}
&=
\|x\|_{\clh_1^{\otimes p}}^2
+
\|y\|_{\clh_1^{\otimes p}}^2
+
2\Re\langle R(v)^{\otimes p}y,x\rangle_{\clh_1^{\otimes p}} \\
&\ge
\|x\|_{\clh_1^{\otimes p}}^2
+
\|y\|_{\clh_1^{\otimes p}}^2
-
2\|x\|_{\clh_1^{\otimes p}}\|y\|_{\clh_1^{\otimes p}} \\
&=
\left(
\|x\|_{\clh_1^{\otimes p}}
-
\|y\|_{\clh_1^{\otimes p}}
\right)^2
\ge 0.
\end{aligned}
\]
Hence \(\mathcal R^{(p)}(v)\) is nonnegative.
\end{proof}

For the following lemma, let $\{X_k\}_{k \in \ZZ}$ be an $\clh$-valued stationary Gaussian stochastic process, with covariance operator $Q$ and autocorrelation function given by 
\begin{equation*}
    \rho_{rs}(l-k) = \E \left[ W_{e_r}(X_k) W_{e_s}(X_l)    \right].
\end{equation*}
The following result allows us to rewrite the normalized series with regard to an isonormal Gaussian process. The proof is very closely related to that of Proposition 7.2.3 of \cite{nourdin2012normal}.

\begin{lemma} \label{pro:isonormal}
    There exists a real separable Hilbert space $\mathfrak{H}$, as well as an isonormal Gaussian process over $\mathfrak{H}$, written $\{ X(h) : h \in \mathfrak{H} \}$, with the property that there exists a set 
$E = \{\varepsilon_{ik} : k \in \ZZ, i \in \NN \} \subset \mfh$ such that 1. $E$ generates $\mathfrak{H}$;
2. $\langle \varepsilon_{ik}, \varepsilon_{jl} \rangle_{\mathfrak{H}}
=
\rho_{ij}(l-k)$,
3. $W_{e_i}(X_k) = X(\varepsilon_{ik})$.
\end{lemma}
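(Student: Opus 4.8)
The plan is to mimic the proof of Proposition 7.2.3 in \cite{nourdin2012normal}: I will take $\mathfrak{H}$ to be the closed Gaussian subspace of $L^2(\Omega,\clf,P)$ spanned by the scores of $\{X_k\}$, and let the desired isonormal Gaussian process be the inclusion map of that subspace into $L^2$.

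First I would note that, via the extension of the white-noise mapping in \eqref{eq:white-noise-def}, each $\langle Q^{-1/2}u_i, X_k\rangle_{\clh_1} = W_{u_i}(X_k)$ is a well-defined, centered, real-valued random variable in $L^2(\Omega,\clf,P)$, even when $\{u_i\}_{i\in\NN}$ is merely an orthonormal basis of $\clh_1$ rather than a basis of eigenvectors of $Q$. Then I would set
\[
\mathfrak{H} \doteq \overline{\operatorname{span}}\big\{ \langle Q^{-1/2}u_i, X_k\rangle_{\clh_1} : i \in \NN,\ k \in \ZZ \big\} \subseteq L^2(\Omega,\clf,P),
\]
a closed subspace of a Hilbert space, hence itself a real Hilbert space with the inherited inner product, and separable because it is the closed linear span of a countable family. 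The key structural point, which is where Gaussianity of $\{X_k\}_{k\in\ZZ}$ enters, is that every element of $\mathfrak{H}$ is a centered Gaussian random variable: finite linear combinations of the scores are centered Gaussian, and $L^2$-limits of centered Gaussian variables are again centered Gaussian.

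Next I would define $X:\mathfrak{H}\to L^2(\Omega,\clf,P)$ to be the inclusion map, $X(h)\doteq h$. It is linear, $\{X(h):h\in\mathfrak{H}\}$ is centered Gaussian by the previous step, and for $h_1,h_2\in\mathfrak{H}$ one has $\E\big(X(h_1)X(h_2)\big)=\langle h_1,h_2\rangle_{L^2(\Omega,\clf,P)}=\langle h_1,h_2\rangle_{\mathfrak{H}}$, the last equality because the $\mathfrak{H}$-inner product is the restriction of the $L^2$-inner product; hence $\{X(h):h\in\mathfrak{H}\}$ is an isonormal Gaussian process over $\mathfrak{H}$. To meet the requirement in the definition that the underlying $\sigma$-algebra be generated by the process, I would pass to the $P$-completion of $\sigma\big(X(h):h\in\mathfrak{H}\big)\subseteq\clf$, which changes nothing in what follows. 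Finally I would put $\varepsilon_{ik}\doteq\langle Q^{-1/2}u_i,X_k\rangle_{\clh_1}\in\mathfrak{H}$ and $E\doteq\{\varepsilon_{ik}:i\in\NN,\,k\in\ZZ\}$, and verify the three properties directly: property~1 holds since $\mathfrak{H}$ is by construction the closed span of $E$; property~2 is immediate from the definition of $\rho_{ij}$ given just before the lemma, namely $\langle\varepsilon_{ik},\varepsilon_{jl}\rangle_{\mathfrak{H}}=\E\big(\langle Q^{-1/2}u_i,X_k\rangle_{\clh_1}\langle Q^{-1/2}u_j,X_l\rangle_{\clh_1}\big)=\rho_{ij}(k-l)$; and property~3 is nothing but the definition of $X$ as the inclusion, $X(\varepsilon_{ik})=\varepsilon_{ik}=\langle Q^{-1/2}u_i,X_k\rangle_{\clh_1}$.

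I do not expect a serious obstacle here: the argument is essentially an unwinding of definitions, exactly as in \cite[Proposition 7.2.3]{nourdin2012normal}. The only points needing care are the Gaussian-subspace claim (closedness of Gaussianity under $L^2$-limits, which is precisely what forces $\{X_k\}$ to be Gaussian), the minor bookkeeping around the generated $\sigma$-algebra and its $P$-completion, and the small subtlety of reading the scores $\langle Q^{-1/2}u_i,X_k\rangle_{\clh_1}$ through the extended white-noise map when $\{u_i\}$ is a general orthonormal basis rather than the eigenbasis of $Q$.
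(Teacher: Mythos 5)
Your proof is correct, but it takes the dual route to the paper's. The paper constructs $\mathfrak{H}$ \emph{abstractly}: it starts from the space $\mathcal{E}$ of finitely supported real arrays $h=\{h_{ik}\}$, equips it with the semi-inner product $\langle f,g\rangle_{\mathfrak{H}}=\sum f_{ik}g_{jl}\rho_{ij}(k-l)$, completes, takes $\varepsilon_{ik}$ to be the coordinate delta arrays, and only then maps into $L^2(\Omega)$ via $X(h)=\sum h_{ik}\langle Q^{-1/2}u_i,X_k\rangle_{\clh_1}$. You instead realize $\mathfrak{H}$ \emph{concretely} as the closed Gaussian subspace of $L^2(\Omega)$ spanned by the scores, with $X$ the inclusion and $\varepsilon_{ik}$ the score itself. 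The two constructions produce isometrically isomorphic objects, and all three properties come out the same way. Your version has the small advantage of sidestepping a point the paper glosses over: the bilinear form on $\mathcal{E}$ is a priori only positive semi-definite, so strictly one should quotient by its null space before completing, whereas working inside $L^2$ makes positivity automatic. The paper's version, in exchange, gives a coordinate representation of $\mathfrak{H}$ in which the kernels $\varepsilon_{j_1k}\otimes\cdots\otimes\varepsilon_{j_pk}$ used in the chaos decomposition are notationally cleaner to manipulate. You correctly identify the two points that need care in either construction, namely that the extended white-noise map makes the scores well-defined, jointly Gaussian elements of $L^2$ for a general orthonormal basis $\{u_i\}$ (not just the eigenbasis of $Q$), and that $L^2$-limits of centered jointly Gaussian variables remain Gaussian; your handling of the generated $\sigma$-algebra is also fine.
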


\begin{proof}
We start by defining $\mathfrak{H}$. First define a set $\cle$, such that $h \in \cle$ if and only if $h = \{ h_{ik}, k \in \ZZ, i \in \NN  \mid h_{ik} = 0   \text{ for all but finitely many } h_{ik} \}$. Equip the space $\cle$ with the inner product
\begin{align} \label{eq:h_inner_prod}
\langle f, g \rangle_{\mfh} 
= \sum_{\substack{k,l \in \ZZ \\ i,j \in \NN}} f_{ik} g_{jl} \rho_{ij}(l-k) 
= \sum_{\substack{k,l \in \ZZ \\ i,j \in \NN}} f_{ik} g_{jl} \E \left[ W_{e_i}(X_k)  W_{e_j}(X_l) \right],
\end{align}
whenever $f,g \in \cle$, and define $\mathfrak{H}$ as the closure of $\cle$ with respect to the inner product $\langle \cdot, \cdot \rangle_{\mfh}$.

We proceed with defining the generating set $E$. First, we take, for $k \in \ZZ, i \in \NN$, sequences of the form
\begin{equation*}
    \veps_{ik} = \{ \delta_{k l} \delta_{i j}, j \in \NN , l \in \ZZ \} = \{ \delta_{k l}  \langle u_i, u_j \rangle_{\clh_1}, j \in \NN, l \in \ZZ  \}.
\end{equation*}
Consider the set $E \doteq \{ \veps_{ik}, i \in \NN , k \in \ZZ \} \subset \mfh$. Clearly $E$ is a generating set for $\mfh$, and so Property 1 in the statement is satisfied. For $h \in \cle$, define 
\begin{equation} \label{eq:Xh}
    X(h) \doteq \sum_{k \in \ZZ,i \in \NN} h_{ik} W_{e_i}(X_k).
\end{equation}
Then, for $h \in \mfh$, select a sequence $h_n \in \cle$ converging to $h$ and take $X(h) = \lim_{n \to \infty} X(h_n)$, where the limit is understood in the $L^2(\Om)$ sense. We prove that $\{ X(h): h \in \mathfrak{H} \}$ is an isonormal process over $\mathfrak{H}$. Indeed, for $f,g \in \mfh$, from \eqref{eq:h_inner_prod} and \eqref{eq:Xh},
\begin{equation*}
\begin{split}
\E \left[ X(f) X(g) \right] &= \E \left[ \left(\sum_{k \in \ZZ,i \in \NN} f_{ik} W_{e_i}(X_k) \right) \times \left(    \sum_{l \in \ZZ,j \in \NN} g_{jl} W_{e_j}(X_l)\right)     \right] \\
&= \sum_{\substack{k,l \in \ZZ \\ i,j \in \NN}} f_{ik} g_{jl} \rho_{ij}(l-k) = \langle f,g \rangle_{\mathfrak{H}}
\end{split} 
\end{equation*}
and so, in particular, Property 2 follows. Finally, by construction, 
\begin{equation*}
    X(\veps_{ik}) = \sum_{l \in \ZZ,j \in \NN} \delta_{i j} \delta_{k l} W_{e_j}(X_l) = W_{e_i}(X_k),
\end{equation*}
such that Property 3 is also satisfied.
\end{proof}

% The authors are grateful to Giovanni Peccati for bringing to their attention an error in the literature, which affected the proof in the original version of this article.

\section*{Acknowledgements}

PZ was supported by the NSF grant DMS-2152577, from a dissertation completion fellowship from UNC's graduate school, and from the Deutsche Forschungsgemeinschaft (DFG, German
Research Foundation) under Germany’s Excellence Strategy EXC 2044/2 –390685587, Mathematics Münster:
Dynamics–Geometry–Structure.

The authors are grateful to Giovanni Peccati for pointing out an error in the literature that affected the proof presented in the original version of this article.

\small
\bibliographystyle{acm}
\bibliography{mybibliography}

\end{document}